\pgfplotsset{compat=1.8}
\numberwithin{equation}{section}
\theoremstyle{plain}
\newtheorem{thm}{Theorem}[section]
\newtheorem{lemma}[thm]{Lemma}
\newtheorem{prop}[thm]{Proposition}
\newtheorem{cor}[thm]{Corollary}
\theoremstyle{remark}
\newtheorem{rmk}[thm]{Remark}
\newtheorem{example}[thm]{Example}
\theoremstyle{definition}
\newtheorem{defn}[thm]{Definition}
\setlist{topsep=0.5em,itemsep=0.25em}
\renewcommand{\theequation}{\thesection.\arabic{equation}}
\renewcommand{\thesection}{\arabic{section}}
\renewcommand{\thesubsection}{\arabic{section}.\arabic{subsection}}
\newcommand{\R}{{\mathbb R}}
\newcommand{\Q}{{\mathbb Q}}
\newcommand{\C}{{\mathbb C}}
\newcommand{\Z}{{\mathbb Z}}
\newcommand{\D}{{\mathbb D}}
\newcommand{\N}{{\mathbb N}}
\newcommand{\bbS}{{\mathbb S}}
\DeclareMathOperator{\E}{\mathbb{E}}
\DeclareMathOperator{\PP}{\mathbb{P}}
\newcommand{\eps}{\epsilon}
\newcommand{\var}{\text{-}\mathrm{var}}
\newcommand{\SMALL}{\textstyle}
\newcommand{\BIG}{\displaystyle}
\newcommand{\mrd}{\mathop{}\!\mathrm{d}}
\newcommand{\id}{{\mathrm{id}}}
\newcommand{\cA}{{\mathcal A}}
\newcommand{\cB}{{\mathcal B}}
\newcommand{\cF}{{\mathcal F}}
\newcommand{\cG}{{\mathcal G}}
\newcommand{\cJ}{{\mathcal J}}
\newcommand{\cM}{{\mathcal M}}
\newcommand{\cSJ}{{\mathcal{SJ}}}
\newcommand{\cSM}{{\mathcal{SM}}}
\newcommand{\cWM}{{\mathcal{WM}}}
\newcommand{\cP}{{\mathcal P}}
\newcommand{\sD}{{\mathscr{D}}}
\newcommand{\bsD}{{\bar{\mathscr{D}}}}
\newcommand{\floor}[1]{\lfloor #1 \rfloor}
\newcommand{\roof}[1]{\lceil #1 \rceil}
\DeclareMathOperator{\Cov}{Cov}
\DeclareMathOperator{\Corr}{Corr}
\DeclareMathOperator{\diam}{diam}
\DeclareMathOperator{\BV}{BV}
\DeclareMathOperator{\Hol}{H\ddot{o}l}
\DeclareMathOperator{\Leb}{Leb}
\DeclareMathOperator{\Lip}{Lip}
\DeclareMathOperator{\sgn}{sgn}
\DeclareMathOperator{\Var}{Var}
\newcommand{\tL}{{\tilde L}}
\newcommand{\tW}{{\widetilde W}}
\newcommand{\tX}{{\widetilde X}}
\newcommand{\tLambda}{{\tilde \Lambda}}
\newcommand{\ttau}{{\tilde \tau}}
\newcommand{\tzeta}{{\tilde \zeta}}
\newcommand{\balpha}{{\boldsymbol \alpha}}
\newcommand{\bsigma}{{\boldsymbol \sigma}}
\newcommand{\hX}{{\hat{X}}}
\newcommand{\hZ}{{\hat{Z}}}
\DeclareFontFamily{U}{matha}{\hyphenchar\font45}
\DeclareFontShape{U}{matha}{m}{n}{
      <5> <6> <7> <8> <9> <10> gen * matha
      <10.95> matha10 <12> <14.4> <17.28> <20.74> <24.88> matha12
      }{}
\DeclareSymbolFont{matha}{U}{matha}{m}{n}
\DeclareFontFamily{U}{mathx}{\hyphenchar\font45}
\DeclareFontShape{U}{mathx}{m}{n}{
      <5> <6> <7> <8> <9> <10>
      <10.95> <12> <14.4> <17.28> <20.74> <24.88>
      mathx10
      }{}
\DeclareSymbolFont{mathx}{U}{mathx}{m}{n}
\DeclareMathDelimiter{\vvvert}{0}{matha}{"7E}{mathx}{"17}
\title{Superdiffusive limits for deterministic fast-slow dynamical systems}
\author{
\hspace*{5em}
Ilya Chevyrev
\thanks{School of Mathematics,
University of Edinburgh,
Edinburgh EH9 3FD,
United Kingdom.
ichevyrev@gmail.com}
\and Peter K. Friz 
\thanks{Institut f\"ur Mathematik, Technische Universit\"at Berlin, and Weierstra\ss --Institut f\"ur Angewandte Analysis und Stochastik, Berlin, Germany.
friz@math.tu-berlin.de}
\hspace*{5em}
\and
Alexey Korepanov
\thanks{Department of Mathematics,
University of Exeter,
Exeter, EX4 4QF,
United Kingdom.
a.korepanov@exeter.ac.uk}
\and
Ian Melbourne
\thanks{Mathematics Institute,
University of Warwick,
Coventry, CV4 7AL,
United Kingdom.
i.melbourne@warwick.ac.uk}
}
\date{}
\begin{document}

 \maketitle

 \begin{abstract}
We consider deterministic fast-slow dynamical systems on $\mathbb{R}^m\times Y$ of the form
\[
  \begin{cases}
    x_{k+1}^{(n)} = x_k^{(n)} + n^{-1} a(x_k^{(n)}) + n^{-1/\alpha} b(x_k^{(n)}) v(y_k)\;, \\
    y_{k+1} = f(y_k)\;,
  \end{cases}
\]
where $\alpha\in(1,2)$. Under certain assumptions we prove convergence of the $m$-dimensional process $X_n(t)=
 x_{\lfloor nt \rfloor}^{(n)}$ to the solution of the stochastic differential equation
\[
    \mathop{}\!\mathrm{d} X = a(X)\mathop{}\!\mathrm{d} t + b(X) \diamond \mathop{}\!\mathrm{d} L_\alpha
   \; ,
  \]
where $L_\alpha$ is an $\alpha$-stable Lévy process and $\diamond$ indicates that the stochastic integral is in the Marcus sense. In addition, we show that our assumptions are satisfied for intermittent maps $f$ of Pomeau-Manneville type.
 \end{abstract}

\tableofcontents

\section{Introduction}
\label{sec:intro}

Averaging and homogenisation for systems with multiple timescales is
a longstanding and very active area of research~\cite{PavliotisStuart}.  
We focus particularly on homogenisation, where the limiting equation is a stochastic differential equation (SDE).
Recently there has been considerable interest in the case where the underlying multiscale system is deterministic, see~\cite{ CFKMZsub,Dolgopyat04,Dolgopyat05,
GM13,KM16,KM17,KKM20,MS11,Pene02} as well as our survey paper~\cite{CFKMZ}.
Almost all of this previous research has been concerned with the case where the limiting SDE is driven by Brownian motion.  
Here, we consider the case where the limiting SDE is driven by a superdiffusive $\alpha$-stable L\'evy process.

Let $\alpha\in(1,2)$.
The multiscale equations that we are interested in have the form
\begin{equation} \label{eq:fs}
  \begin{cases}
    x_{k+1}^{(n)} = x_k^{(n)} + n^{-1} a(x_k^{(n)}) + n^{-1/\alpha} b(x_k^{(n)}) v(y_k)\; , \\
    y_{k+1} = f(y_k)
  \end{cases}
\end{equation}
defined on $\R^m\times Y$ where $Y$ is a bounded metric space.
Here
\[
    a \colon \R^m\to\R^m
    \;, \quad
    b \colon \R^m\to\R^{m\times d}
    \;, \quad
    v \colon Y\to\R^d
    \;, \quad
    f\colon Y\to Y
    \;.
\]
It is assumed that the fast dynamical system $f\colon Y\to Y$ has an ergodic invariant probability measure $\mu$ and exhibits  superdiffusive behaviour; specific examples for such $f$ are described below.  
Let $v \colon Y\to\R^d$ be H\"older with $\int v\mrd\mu=0$.
Define for $n\ge1$,
\begin{equation} \label{eq:Wn}
    W_n(t)=n^{-1/\alpha}\sum_{j=0}^{\floor{nt}-1}v\circ f^j
    \;.
\end{equation}
Then $W_n$ belongs to $D([0,1], \R^d)$,
the Skorokhod space of c\`adl\`ag functions, and can be viewed
as a random process on the probability space $(Y, \mu)$ depending on the initial condition $y_0\in Y$.
As $n \to \infty$, the 
sequence of random variables $W_n(1)$ converges weakly in $\R^d$ to
an $\alpha$-stable law, and the process $W_n$ converges weakly in 
$D([0,1],\R^d)$ to the
corresponding $\alpha$-stable L\'evy process $L_\alpha$.
%$\cSM_1$ (strong $\cM_1$) Skorokhod topology to an $\alpha$-stable L\'evy process $L_\alpha$.

Now consider $x_0^{(n)}=\xi_n \in \R^m$, and solve~\eqref{eq:fs} to obtain
$(x_k^{(n)},y_k)_{k\ge0}$ depending on the initial condition $y_0\in(Y,\mu)$.  Define the c\`adl\`ag process $X_n\in D([0,1],\R^m)$ given by
$X_n(t) = x_{\floor{nt}}^{(n)}$;   again we view this as a process on $(Y,\mu)$.
Our aim is to show, 
under mild regularity assumptions on the functions $a\colon \R^m\to \R^m$ and $b \colon \R^m \to \R^{m \times d}$, that
$X_n\to_w X$ where $X$ is the solution of the SDE
\begin{equation} \label{eq:SDE}
    \mrd X = a(X)\mrd t + b(X) \diamond \mrd L_\alpha
    \; , \qquad X(0) = \xi
\end{equation}
and $\xi = \lim_{n\to\infty} \xi_n$.
Here, $\diamond$ indicates that the SDE is in the Marcus sense~\cite{Marcus80} (see~\cite{KPP95,Applebaum09,CP14} for the general theory of Marcus SDEs and their applications).

Previously such a result was shown
by Gottwald and Melbourne~\cite[Section~5]{GM13} in the special case $d=m=1$.  Generally the method in~\cite{GM13} works provided the noise is exact, that is $d=m$ and $b=(Dr)^{-1}$ for some diffeomorphism $r\colon \R^m\to\R^m$,
but cannot handle the general situation considered here where the noise term is typically not exact.  
There are three main complications:

\begin{itemize}
\item[(1)] In the case of exact noise, it is possible to reduce to the case
$b \equiv \id$ by a change of coordinates, similar to Wong-Zakai~\cite{WZ65}.
The general situation
necessitates the use of alternative tools such as rough paths.
In particular, weak convergence of $W_n$ is no longer sufficient and
we require in addition that $W_n$ is tight in $p$-variation.
This is shown in Theorem~\ref{thm:tight} below for specific examples, and in Section~\ref{sec:driver} for a large class of deterministic dynamical systems $f\colon Y\to Y$.

\item[(2)] Since the results for exact noise are achieved by a change of coordinates,
the sense of convergence for $W_n$ is inherited by $X_n$.
However, in general, even if $W_n\to_w L_\alpha$ in one of the standard
Skorokhod topologies~\cite{S56}, this need not be the case for $X_n$.
This phenomenon already appears in the simplest situations, as
illustrated in Example~\ref{ex:circle}.
Hence we have to consider convergence of $X_n$ in generalised Skorokhod topologies as introduced recently in Chevyrev and Friz~\cite{CF19}.

\item[(3)]  Rigorous results on convergence to $d$-dimensional stable L\'evy processes in  deterministic dynamical systems are only available for $d=1$, see~\cite{AaronsonDenker01,KPZ18,MZ15,TyranKaminska10}.  Hence one of the aims of this paper is to extend the dynamical systems theory to cover the case $d\ge2$.  
See Theorem~\ref{thm:PM} below for instances of this, and Section~\ref{sec:driver} for a general treatment.
\end{itemize}

In the remainder of the introduction, we discuss some of the issues associated to these three complications.  We also mention some examples of fast dynamical systems that lead to superdiffusive behaviour.
The archetypal such dynamical systems are the intermittent maps introduced by
Pomeau and Manneville~\cite{PomeauManneville80}.
Perhaps the simplest example~\cite{LSV99} is the map $f\colon Y\to Y$, $Y=[0,1]$, with a neutral fixed point at $0$:
\begin{equation} \label{eq:LSV}
    f(y) =
    \begin{cases}
        y (1 + 2^{1/\alpha} y^{1/\alpha})\;, & y \in [0, \frac12) \; , \\
        2y - 1\;, & y \in [\frac12,1] \; .
    \end{cases}
\end{equation}
See Figure~\ref{fig:PM}(a).
Here, $\alpha>0$ is a real parameter and there is a unique absolutely continuous invariant probability measure $\mu$ for $\alpha>1$.
Let $v\colon Y\to\R$ be H\"older with $\int_Y v\mrd\mu=0$ and $v(0)\neq0$, and
define $W_n$ as in~\eqref{eq:Wn}.
For $\alpha\in(1,2)$ it was shown by~\cite{Gouezel04} (see also~\cite{Zweimuller03}) that $W_n(1)$ converges in distribution to an $\alpha$-stable law.  By~\cite{MZ15},
the process $W_n$ converges weakly to the corresponding L\'evy process $L_\alpha$ in the $\cM_1$ Skorokhod topology on $D([0,1],\R)$.  
%As already mentioned, this leads to the desired homogeneous result $X_n\to_w X$ (also in the $\cM_1$ topology) in the case $m=d=1$.

Now let $d\ge2$.  There are two versions of the $\cM_1$ topology on
$D([0,1],\R^d)$, see~\cite[Chapter~3.3]{Whitt02}.
In this paper we use the strong topology $\cSM_1$.
For $v\colon Y\to\R^d$ H\"older with $\int_Y v\mrd \mu=0$ and $v(0)\neq0$, we prove
convergence of $W_n$ to a $d$-dimensional L\'evy process $L_\alpha$ in the $\cSM_1$ topology.

The example~\eqref{eq:LSV} is somewhat oversimplified for our purposes since
$L_\alpha$ is essentially one-dimensional, being supported on the line $\{c v(0): c \in \R\}$.  This structure can be exploited in proving that $W_n\to_w L_\alpha$,
though it is not clear if this simplifies the homogenisation result $X_n\to_w X$.
To illustrate that we do not rely on one-dimensionality of the limiting process in any way, we consider an example with two neutral fixed points.
(It is straightforward to extend to maps with a larger number of neutral fixed points.)
Accordingly, our main example is the intermittent map
$f \colon Y \to Y$, $Y= [0,1]$, with two
symmetric neutral fixed points at $0$ and~$1$:
\begin{equation}
    \label{eq:PM}
    f(y) = \begin{cases} 
        y(1+3^{1/\alpha} y^{1/\alpha})\;, & y \in[0, \frac{1}{3}) \; , \\
        3y-1\;, & y\in [\frac{1}{3} , \frac{2}{3}) \; , \\
        1 - (1-y) (1 + 3^{1/\alpha} (1-y)^{1/\alpha})\;, & y\in[ \frac{2}{3},1] \; .
  \end{cases}
\end{equation}
See Figure~\ref{fig:PM}(b).
Again $\alpha>0$ is a real parameter, there is a unique absolutely continuous invariant probability measure $\mu$ for $\alpha>1$, and we restrict to the range 
$\alpha\in(1,2)$.

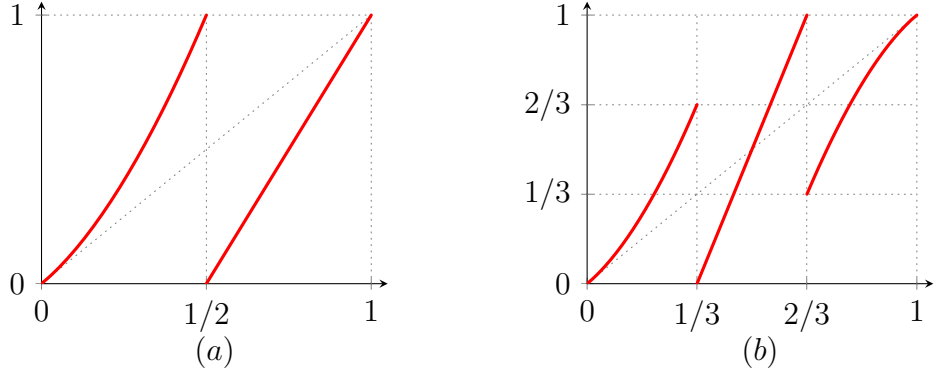
\begin{figure}
\[
     \begin{tikzpicture}
         \tikzmath{ \ainv = 0.9; } % alpha^{-1}, the usual parameter of the map
     \begin{axis}[
       name =Wn,
       height       = 2.1in,
       xmax         = 1.05,
       ymax         = 1.05,
       xtick        = {0.0, 0.5, 1.0},
       xticklabels  = {0, $1/2$, 1},
       axis x line  = bottom,
       ytick        = {0.0, 1.0},
       yticklabels  = {0, 1},
       axis y line  = left,
       line cap=round
     ]
         % dashed lines
         \addplot[gray,dotted] coordinates { (0,1) (1,1) };
         \addplot[gray,dotted] coordinates { (1,0) (1,1) };
         \addplot[gray,dotted] coordinates { (0,0) (1,1) };
         \addplot[gray,dotted] coordinates { (0.5,0) (0.5,1) };
         % teh grahp
         \addplot[red,very thick,domain=0:0.5, samples=20]{ x*(1+(2*x)^(\ainv)) };
         \addplot[red,very thick,domain=0.5:1.0, samples=2]{ 2*x - 1 };
     \end{axis}
     %\node[anchor=north] at (Wn.south) {$\begin{matrix} \\ f \text{ as in \eqref{eq:LSV}} \end{matrix}$};
     \node[anchor=north] at (Wn.south) {$\begin{matrix} \\ (a) \end{matrix}$};
     \end{tikzpicture}
     \qquad \qquad
     \begin{tikzpicture}
         \tikzmath{ \ainv = 0.9; } % alpha^{-1}, the usual parameter of the map
     \begin{axis}[
       name =Wn,
       height       = 2.1in,
       xmax         = 1.05,
       ymax         = 1.05,
       xtick        = {0.0, 0.333, 0.666, 1.0},
       xticklabels  = {0, $1/3$, $2/3$, 1},
       axis x line  = bottom,
       ytick        = {0.0, 0.333, 0.666, 1.0},
       yticklabels  = {0, $1/3$, $2/3$, 1},
       axis y line  = left,
       line cap=round
     ]
         % dashed lines
         \addplot[gray,dotted] coordinates { (0,1) (1,1) };
         \addplot[gray,dotted] coordinates { (1,0) (1,1) };
         \addplot[gray,dotted] coordinates { (0,0) (1,1) };
         \addplot[gray,dotted] coordinates { (0,0.333) (1,0.333) };
         \addplot[gray,dotted] coordinates { (0,0.666) (1,0.666) };
         \addplot[gray,dotted] coordinates { (0.333,0) (0.333,1) };
         \addplot[gray,dotted] coordinates { (0.666,0) (0.666,1) };
         % the graph
         \addplot[red,very thick,domain=0:(1/3), samples=20]{ x*(1+(3*x)^(\ainv)) };
         \addplot[red,very thick,domain=(1/3):(2/3), samples=2]{ 3*x - 1 };
         \addplot[red,very thick,domain=(2/3):1, samples=20]{ 1 - (1-x)*(1+(3*(1-x))^(\ainv)) };
     \end{axis}
     %\node[anchor=north] at (Wn.south) {$\begin{matrix} \\ f \text{ as in \eqref{eq:PM}} \end{matrix}$};
     \node[anchor=north] at (Wn.south) {$\begin{matrix} \\ (b) \end{matrix}$};
     \end{tikzpicture}
 \]

\vspace{-2ex}
\caption{Examples of intermittent maps:  (a) the map~\eqref{eq:LSV}, (b) the map~\eqref{eq:PM}.}
\label{fig:PM}
\end{figure}

As part of a result for a general class of nonuniformly expanding maps (Section~\ref{sec:driver}) we prove:
\begin{thm}
    \label{thm:PM}
    Consider the intermittent map~\eqref{eq:LSV} or~\eqref{eq:PM} with $\alpha\in(1,2)$
    and let $v\colon Y\to\R^d$ be H\"older with $\int_Y v\mrd\mu=0$ and $v(0)\neq0$,
    also $v(1)\neq0$ in case of~\eqref{eq:PM}.
Let $\PP$ be any probability measure on $Y$ that is absolutely continuous with respect to Lebesgue, and regard $W_n$ as a process on $(Y,\PP)$.
    Then 
\[
W_n\to_w L_\alpha\;\text{in $D([0,1],\R^d)$ with the
    $\cSM_1$  topology as $n\to\infty$}\;,
\]
    where $L_\alpha$ is a $d$-dimensional $\alpha$-stable L\'evy process.
\end{thm}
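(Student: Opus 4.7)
\medskip

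\noindent\textbf{Proof plan for Theorem~\ref{thm:PM}.}
The plan is to deduce Theorem~\ref{thm:PM} from the general $\cSM_1$-convergence result for nonuniformly expanding maps proved in Section~\ref{sec:driver}. First I would verify that the intermittent maps~\eqref{eq:LSV} and~\eqref{eq:PM} fit into the Young-tower framework of that section, by inducing on a reference set $Y_0$ bounded away from the neutral fixed point(s). For~\eqref{eq:LSV} we can take $Y_0=[\frac12,1]$ and for~\eqref{eq:PM} a symmetric set like $Y_0=[\frac13,\frac23]$. The induced map $F = f^{\varphi}\colon Y_0\to Y_0$, where $\varphi$ is the first return time, is Gibbs--Markov (uniformly expanding with bounded distortion), and the standard calculation~\cite{LSV99,Gouezel04} gives $\mu(\varphi>n)\sim c\,n^{-\alpha}$ with the constants $c$ determined by the branches of $f$ at the neutral fixed points. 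This brings us into the regime where the induced observable has regularly varying tails, which is the input to the general theorem.

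Second, I would analyse what a single long excursion contributes to the partial sum $W_n$. If $y\in Y_0$ has large first return time $r=\varphi(y)$, then the orbit $y,f(y),\ldots,f^{r-1}(y)$ spends most of its time in a small neighbourhood of a neutral fixed point $p\in\{0\}$ or $p\in\{0,1\}$. H\"older continuity of $v$ then gives
\[
    \Big|\sum_{j=0}^{r-1}v(f^j y) - r\, v(p)\Big| = o(r)
    \quad\text{uniformly as } r\to\infty,
\]
so the excursion produces an increment of $W_n$ that, after normalisation, is approximately a straight-line segment in $\R^d$ of length proportional to $n^{-1/\alpha}r$ and direction $v(p)$. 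This is precisely the geometric structure $\cSM_1$ is designed to capture: a big jump is approximated by a monotone interpolation along its chord, which is exactly what a near-fixed-point excursion delivers.

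Third, I would assemble these ingredients using the induced Gibbs--Markov system $F$. The induced partial sums of the observable $V=\sum_{j=0}^{\varphi-1}v\circ f^j$ satisfy a $d$-dimensional $\alpha$-stable limit theorem by the general arguments developed in Section~\ref{sec:driver}, because $V$ has regularly varying tails concentrated on the rays $\R_+\,v(0)$ and $\R_+\,v(1)$. Unfolding the induction via the return-time change of clock converts this into convergence of $W_n$ in $\cSM_1$ on $D([0,1],\R^d)$. Each induced jump in the limit corresponds to an excursion aligned with one of the directions $v(p)$; the near-linear growth shown above guarantees that the c\`adl\`ag path $W_n$ sits inside an $o(1)$-tube around the interpolating segment, which is what $\cSM_1$-closeness requires. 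The extension from the invariant measure $\mu$ to an arbitrary absolutely continuous initial distribution $\PP$ is handled by the standard absolutely-continuous-initial-measure trick for Gibbs--Markov-tower systems (density comparison and memory loss), and independence of the hitting time into $Y_0$ gives no additional contribution in the limit.

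The main obstacle, in my view, is the geometric alignment step: while convergence in $\cJ_1$ fails generically for these sums (the c\`adl\`ag increments of $W_n$ evolve across many iterates during an excursion rather than in a single step), one must prove $\cSM_1$ convergence by controlling the oscillation of the partial sum along each excursion and showing it remains within a vanishing transverse neighbourhood of the chord from $0$ to $r\,v(p)$. For $d\ge2$ this is a genuinely new point not present in~\cite{MZ15}, and it relies on quantitative H\"older estimates for $v$ together with uniform control of the local behaviour of $f$ near each neutral fixed point.
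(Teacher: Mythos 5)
Your plan is correct and follows essentially the same route as the paper: induce on $Z=[\tfrac12,1]$ (resp.\ $[\tfrac13,\tfrac23]$) to obtain a Gibbs--Markov map, show the induced observable $V$ is regularly varying with index $\alpha$ and spectral measure supported on the rays through $v(0)$ (and $v(1)$), prove the $\alpha$-stable limit for the induced system (Theorem~\ref{thm:weakZ}), and then un-induce in the $\cSM_1$ topology (Theorem~\ref{thm:MZ:Rd}), with the excursion-alignment step you correctly flag as the main new ingredient for $d\ge 2$ handled by the quantity $V^*$ of~\eqref{eq:Vstar}, which your estimate $\bigl|\sum_{j<\ell}v(f^jy)-\ell\,v(p)\bigr|=O(\tau^\beta)$ controls. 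One small caution for when you write this out: $\cSM_1$-closeness of an excursion to its chord requires \emph{both} approximate monotonicity of the projection $c\cdot v_k$ (no backward motion) \emph{and} smallness of the orthogonal component $|v_k-(c\cdot v_k)c|$ --- merely staying in a tube around the chord is not sufficient --- though your key Hölder estimate near the neutral fixed point in fact delivers both, exactly as encoded in $V^*$ and Proposition~\ref{prop:SM_1:bound}.
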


\begin{rmk}
    \label{rmk:PM}
    The limiting process $L_\alpha$ is explicitly identified in Subsection~\ref{sec:PM}.
\end{rmk}

In the context of~\cite{GM13}, the conclusion $W_n\to_w L_\alpha$
was sufficient to prove the homogenisation result $X_n\to_w X$.
This is not the case for general noise, and we require tightness in $p$-variation.
For $1 \le p<\infty$, recall that
the \emph{$p$-variation} of $u\colon [0,1]\to\R^d$ is given by
\begin{equation} \label{eq:pvar}
  \|u\|_{p\var}
  = \sup_{0=t_0<t_1<\dots<t_k=1}
  \Bigl(\sum_{j=1}^k\bigl|u(t_j) - u(t_{j-1})\bigr|^p\Bigr)^{1/p}
  \;,
\end{equation}
where $| \cdot | $ denotes the Euclidean norm on $\R^d$.

\begin{thm} \label{thm:tight}
Consider the intermittent map~\eqref{eq:LSV} or~\eqref{eq:PM} with $\alpha\in(1,2)$
and let $v\colon Y\to\R^d$ be H\"older with $\int_Y v\mrd\mu=0$.
Let $\PP$ be any probability measure on $Y$ that is absolutely continuous with respect to Lebesgue.
Then the family of random variables $\|W_n\|_{p\var}$ is tight on $(Y,\PP)$ for all $p>\alpha$.
\end{thm}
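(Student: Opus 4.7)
The plan is to reduce the theorem to a corresponding tightness statement for an induced (uniformly hyperbolic) Gibbs--Markov system, to which one can apply the general results developed in Section~\ref{sec:driver}. The lifting of tightness from the induced system back to the original system is done via a careful excursion decomposition that exploits the structure of $f$ near its neutral fixed point(s).

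First, by a standard change-of-measure argument (e.g.\ Zweim\"uller's lemma), tightness of $\|W_n\|_{p\var}$ with respect to any absolutely continuous $\PP$ follows from tightness with respect to the invariant probability measure $\mu_0$ of a suitable induced system, so it suffices to work on $(Y_0,\mu_0)$. I would take $Y_0=[\tfrac12,1]$ for~\eqref{eq:LSV} and $Y_0=[\tfrac13,\tfrac23]$ for~\eqref{eq:PM}, with first return time $\varphi\colon Y_0\to\Z_{\ge1}$, first return map $F=f^\varphi\colon Y_0\to Y_0$, and induced observable $V=\sum_{k=0}^{\varphi-1}v\circ f^k$. Then $F$ is a (piecewise uniformly expanding) Gibbs--Markov map, $\mu_0(\varphi>t)\asymp t^{-\alpha}$, and $|V|\le\|v\|_\infty\varphi$, so $\mu_0(|V|>t)\lesssim t^{-\alpha}$.

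Next, I would apply the general $p$-variation tightness result for heavy-tailed Birkhoff sums over Gibbs--Markov systems from Section~\ref{sec:driver} to the induced walk
\[
\tilde W_n(s)=n^{-1/\alpha}\sum_{j=0}^{\lfloor n\mu(Y_0)s\rfloor-1}V\circ F^j,
\]
to conclude that $\|\tilde W_n\|_{p\var}$ is tight for every $p>\alpha$. To transfer this to $W_n$, I would use the excursion decomposition: for $y\in Y_0$ and $\tau_j=\sum_{i<j}\varphi\circ F^i$, one has $n^{1/\alpha}W_n(\tau_j/n)=\sum_{i<j}V\circ F^i$, so the values of $W_n$ at the return epochs $\tau_j/n$ are obtained from $\tilde W_n$ by a random time change (which preserves $p$-variation). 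Between consecutive return times $\tau_j$ and $\tau_{j+1}$, $W_n$ traces out an excursion whose endpoints coincide with successive values of the induced walk. Thus, refining any partition to include the points $\tau_j/n\in[0,1]$,
\[
\|W_n\|_{p\var}^p\le 2^{p-1}\|\tilde W_n\|_{p\var}^p + 2^{p-1}n^{-p/\alpha}\sum_{j\le N_n}\mathrm{osc}_j^p,
\]
where $\mathrm{osc}_j=\sup_{0\le i<\varphi_j}\bigl|\sum_{k=0}^{i-1}v(f^k(F^jy))-(i/\varphi_j)V(F^jy)\bigr|$ is the excursion oscillation and $N_n$ is the number of excursions completed by time $n$.

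The main obstacle is the uniform control of $\mathrm{osc}_j$. For an excursion of length $\varphi_j$, the orbit $f^k(F^jy)$ enters a neighbourhood of the neutral fixed point and approaches it at the polynomial rate $|f^k-\text{fp}|\asymp(\varphi_j-k)^{-\alpha}$ (this is standard for LSV/PM maps); H\"older continuity of $v$ with exponent $\eta$ then gives $|v(f^k)-v(\text{fp})|\lesssim(\varphi_j-k)^{-\alpha\eta}$, so provided $\alpha\eta>1$ the deviations are summable and $\mathrm{osc}_j\le C$ uniformly in $\varphi_j$. If $\alpha\eta\le 1$, one instead iterates $f$ a fixed number of times to boost the effective H\"older exponent (or works with a finer induced observable), an approach standard in this area. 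Granted a uniform bound $\mathrm{osc}_j\le C$, the excursion contribution is at most $C^pn^{-p/\alpha}N_n$; since $N_n/n\to\mu(Y_0)$ a.s.\ by Birkhoff's theorem, this is $O_{\PP}(n^{1-p/\alpha})\to 0$ for $p>\alpha>1$, completing the tightness bound.
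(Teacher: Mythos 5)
Your overall plan---induce on the uniformly expanding first-return map $F\colon Z\to Z$, transfer tightness for the induced walk $\tW_n$ down from the Gibbs--Markov theory of Section~\ref{sec:GM}, and control the excursion contributions separately---is exactly the route the paper takes (Theorems~\ref{thm:tightZ},~\ref{thm:tight:induce},~\ref{thm:Yp} and Lemma~\ref{lem:WZ:p}). The change-of-measure step via Zweim\"uller's lemma and the decomposition of $W_n$ into a time-changed $\tW_n$ plus excursion fluctuations is the same scaffolding as Lemma~\ref{lem:WZ:p} and Lemma~\ref{lem:Zwei}.

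However, your treatment of the excursion oscillations has a genuine gap. You claim $\mathrm{osc}_j\le C$ uniformly, and support this by the rate $|f^k z|\lesssim (\varphi_j-k)^{-\alpha}$ together with H\"older continuity of $v$: but this only gives $|v(f^k z)-v(0)|\lesssim(\varphi_j-k)^{-\alpha\eta}$, which is summable precisely when $\alpha\eta>1$. Since the theorem allows any H\"older exponent $\eta\in(0,1]$ (the only assumptions on $v$ are H\"older regularity and mean zero, with no lower bound on $\eta$), you cannot assume $\alpha\eta>1$. Your proposed fix---``iterate $f$ a fixed number of times to boost the effective H\"older exponent''---does not work: replacing $f$ by $f^N$ does not change the H\"older exponent of $v$, nor the polynomial escape rate from the neutral fixed point, so the obstruction $\alpha\eta\le1$ is unchanged. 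What actually happens when $\alpha\eta<1$ is that the oscillation grows like $\tau_j^{\,1-\alpha\eta}$, not like a constant, and your final step ``$\sum_{j\le N_n}\mathrm{osc}_j^p\lesssim N_n$'' breaks.

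The paper avoids this entirely. In the proof of Lemma~\ref{lem:WZ:p}, the excursion oscillation is bounded crudely by $\|v\|_\infty\,\tau\circ F^j$ (just counting terms, no fine analysis near the fixed point), giving $\|U_n''\|_{p\var}\lesssim \|v\|_\infty\, b_n^{-1}\bigl(\sum_{j<n}\tau^p\circ F^j\bigr)^{1/p}$. The whole weight of the argument is then carried by Proposition~\ref{prop:tau}\ref{prop:tau:sum}, a Karamata-type moment estimate showing $\E\bigl[\bigl(\sum_{j<n}\tau^p\circ F^j\bigr)^{1/p}\bigr]=O(b_n)$ when $\tau$ is regularly varying of index $\alpha<p$. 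This step is the crux you are missing: it is what makes the crude $O(\tau)$ excursion bound good enough, and it requires only the tail of $\tau$ (not any pointwise control of $v$ near the neutral fixed point). You should replace your uniform oscillation bound by the crude $\|v\|_\infty\tau_j$ and prove (or cite) the moment estimate for $\sum\tau^p$; then the argument goes through for all H\"older $v$.
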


The main abstract result in this paper states that
the properties established in Theorems~\ref{thm:PM} and~\ref{thm:tight}
are the key ingredients required to solve the homogenisation problem.  Informally:
\begin{quote}
    Consider the fast-slow system~\eqref{eq:fs} and define
    $W_n$ as in~\eqref{eq:Wn} and $X_n=x_{\floor{nt}}^{(n)}$ with $x^{(n)}_0=\xi_n$.
    Suppose that $\lim_{n\to\infty}\xi_n = \xi$, $W_n\to_w L_\alpha$, an $\alpha$-stable L\'evy process,
    in $D([0,1],\R^d)$ with the $\cSM_1$-topology, and
    that $\|W_n\|_{p\var}$ is tight for all $p>\alpha$.

    If $v$ is bounded and $a,\,b$ are sufficiently smooth, then 
    $X_n\to_w X$ in $D([0,1],\R^m)$ where $X$ is the solution to the SDE~\eqref{eq:SDE}.
\end{quote}

We give a rigorous formulation of this result in Theorem~\ref{thm:rig} (in the above statement we assume that the limiting process is L{\'e}vy only for convenience -- the result holds true for an arbitrary limiting process as seen from Theorem~\ref{thm:rig}).
To complete the statement, it is necessary to describe the topology on $D([0,1],\R^m)$ in which $X_n$ converges.
As already indicated, the $\cSM_1$ topology is too strong in general.
The next example illustrates where the problem lies.

% \begin{rmk}
% As mentioned above, for an intermittent map with a single neutral fixed point, the convergence $W_n \to L_\alpha$ was established in~\cite{MZ15} for $d=1$.
% The same result holds for $d\geq 2$, however then $L_\alpha$ is essentially one-dimensional, being supported on the line $\{c v(0): c \in (-\infty, \infty)\}$.
% With the two neutral fixed points at $0$ and $1$ with $v(0)$ and $v(1)$
% not collinear, the support of $L_\alpha$ is the plane
% $\{c_1 v(0) + c_2 v(1) : c_1, c_2 \in (-\infty, \infty)\}$.
% We focus on the case of two neutral fixed points in order to illustrate that we do not rely on one-dimensionality of the limiting process in any way.
% Note, however, that even for $d\geq 2$ and a single neutral fixed point (so the limiting process $L_\alpha$ is essentially one-dimensional), our results apply and are non-trivial since the approximating processes $W_n$ are genuinely multi-dimensional (and it is not obvious that their fluctuations off the line $cv(0)$ do not contribute to the limiting SDE).
% We also note that it is straightforward to extend to maps with a larger number of neutral fixed points.
% \end{rmk}

\begin{example}
  \label{ex:circle}
  Let $\theta > 0$ and consider continuous deterministic processes $W_n \colon [0,1] \to \R$ which are equal to $0$ on $[0,\frac12]$,
  equal to $\theta$ on $[\frac12 + \frac{1}{n}, 1]$, and linear on $[\frac12, \frac12 + \frac{1}{n}]$.
  Let $X_n = (X_n^1, X_n^2)$ be the solution of the ordinary differential equation
  \[
    \begin{pmatrix}
      \mrd X_n^1 \\ \mrd X_n^2
    \end{pmatrix}
    =
    \begin{pmatrix}
      - X_n^2 \\ X_n^1
    \end{pmatrix}
    \mrd W_n
    \;, \qquad
    \begin{pmatrix}
      X_n^1(0) \\ X_n^2(0)
    \end{pmatrix}
    =
    \begin{pmatrix}
      1 \\ 0
    \end{pmatrix}
    \;.
  \]
The graphs of $W_n$ and $X_n$ are shown in Figure~\ref{fig:circle}.

  It is easy to see that $W_n$ converges to $\theta \, 1_{[1/2,1]}$
  in the $\cM_1$ topology as $n \to \infty$, and that
  $(X_n^1, X_n^2) = (\cos W_n , \sin W_n)$.
  The process $X_n$ converges pointwise to 
  \[
      X(t) = \begin{cases}
          (1,0)\;, &  t \leq \frac12\;, \\
          (\cos\theta, \sin\theta)\;, & t > \frac12 \; .
      \end{cases}
  \]
  In particular, if $\theta = 2 \pi$, then $X \equiv (1,0)$ is continuous.
  At the same time, $X_n$ fails to converge in any
  of the Skorokhod topologies.
\end{example}

\begin{figure}[ht]
  \[
      \begin{tikzpicture}
          \tikzmath{ \hhh = 6.5; \hhhh = \hhh * 1.1; }
          \begin{axis}[
              name =Wn,
              height       = 1.9in,
              xmax         = 1.05,
              ymax         = \hhhh ,
              xtick        = {0.0, 0.5, 0.6, 1.0},
              xticklabels  = {0, $\frac{1}{2}$, , 1},
              ytick        = {0.0, \hhh },
              yticklabels  = {0, $\theta$},
              axis lines   = center,
              line cap=round
              ]
              \addplot[color=gray,dotted] coordinates { (0,\hhh) (0.6,\hhh) };
              \addplot[color=red,very thick] coordinates { (0,0) (0.5,0) };
              \addplot[color=red,very thick] coordinates { (0.5,0) (0.6, \hhh ) };
              \addplot[color=red,very thick] coordinates { (0.6, \hhh ) (1, \hhh ) };
          \end{axis}
          \node[anchor=north] at (Wn.south) {$\begin{matrix} \\ \\ W_n \end{matrix}$};
      \end{tikzpicture}
      \qquad
      \begin{tikzpicture}
          \tikzmath{ \hhh = 6.5; }
          \begin{axis}[
              name       = Zn,
              line cap   = round,
              view       = {-25}{-25},
              axis lines = center,
              xmax       = 1.1,
              ymax       = 1.1,
              zmax       = 1.1,
              height     = 2.4in,
              xtick      = {1},
              xticklabel = {$X_n^1$},
              xtick style={draw=none},
              ytick      = {1},
              yticklabel = {$X_n^2$},
              ytick style={draw=none},
              ztick      = {1},
              zticklabel = {$t$},
              ztick style={draw=none},
              ]
              \addplot3+ [
                  domain     = 0:\hhh ,
                  samples    = 64,
                  samples y  = 0,
                  mark       = none,
                  very thick,
                  color      = red
                  ] ( {cos(deg(x))},{sin(deg(x)},{0.5 + 0.1 * x / \hhh } );
              \addplot3+ [
                  color      = red,
                  mark       = none,
                  very thick
                  ] coordinates { (1,0,0) (1,0,0.5)};
              \addplot3+ [
                  color      = red,
                  mark       = none,
                  very thick
                  ] coordinates { ({cos(deg(\hhh))},{sin(deg(\hhh))},0.6)
                  ({cos(deg(\hhh))},{sin(deg(\hhh))},1.0) };
          \end{axis}
          \node[anchor=north] at (Zn.south) {$\begin{matrix} \\ \\ X_n \end{matrix}$};
      \end{tikzpicture}
      \qquad
      \begin{tikzpicture}
          \tikzmath{ \hhh = 6.5; \hhhh = \hhh * 1.1; }
          \begin{axis}[
              name =Znc,
              height       = 1.9in,
              xmax         = 1.05,
              ymin         = -1.2,
              ymax         = 1.2,
              xtick        = {0.0, 0.5, 0.6, 1.0},
              xticklabels  = {0, $\frac{1}{2}$, , 1},
              ytick        = {-1, 0, 1},
              yticklabels  = {-1, 0, 1},
              axis lines   = center,
              line cap=round
              ]
              %
              % Z_n^1
              %\addplot[color=red,very thick] coordinates {(0,1) (0.5, 1)};
              %\addplot[color=red,very thick,domain=0.5:0.6] {cos(deg((x-0.5) / 0.1 * \hhh))};
              %\addplot[color=red,very thick] coordinates {(0.6,{cos(deg(\hhh))}) (1.0,{cos(deg(\hhh))})};
              %
              % Z_n^2
              \addplot[color=red,very thick,dotted] coordinates {(0,0) (0.5, 0)};
              \addplot[color=red,very thick,dotted,domain=0.5:0.6] {sin(deg((x-0.5) / 0.1 * \hhh))};
              \addplot[color=red,very thick,dotted] coordinates {(0.6, {sin(deg(\hhh))}) (1.0, {sin(deg(\hhh))})};
          \end{axis}
          \node[anchor=north] at (Znc.south) {$\begin{matrix} \\ \\ X_n^2 \end{matrix}$};
      \end{tikzpicture}
  \]
\caption{Graphs of $W_n$ and $X_n=(X_n^1,X_n^2)$ in Example~\ref{ex:circle}.}
\label{fig:circle}
\end{figure}
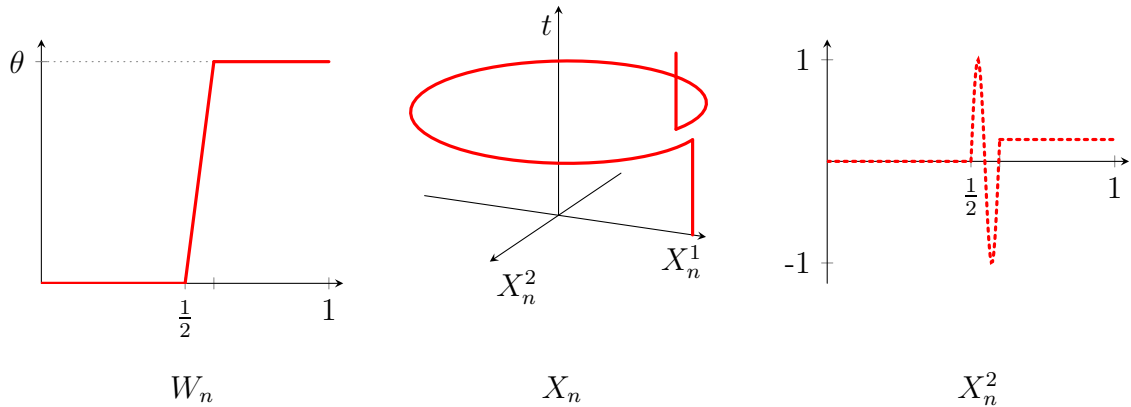

The problem outlined in Example~\ref{ex:circle} arises naturally in the fast-slow system~\eqref{eq:fs}.
Figure~\ref{fig:plots}
illustrates a realisation\footnote{Generated from~\url{https://khu.dedyn.io/work/scaled-graphs/fast-slow/}}
of $W_n$ and $X_n$ for $d=m=2$ and the map~\eqref{eq:PM}.
The function $b$ is taken as
\[
    b(x_1,x_2)\binom{v_1}{v_2} = \binom{-x_2}{x_1}v_1 + \binom{x_1}{x_2}v_2\;.
\]
Note that, although $W_n$ appears to converge in $\cSM_1$ in accordance with Theorem~\ref{thm:PM}, $X_n$ moves along the integral curves of a vector field, and thus does not approximate its limit in $\cSM_1$. 
%The observable is $v(y)=(y-\frac12, \cos(2\pi y)-c)$, where $c$ is chosen so that $v$ is centered.

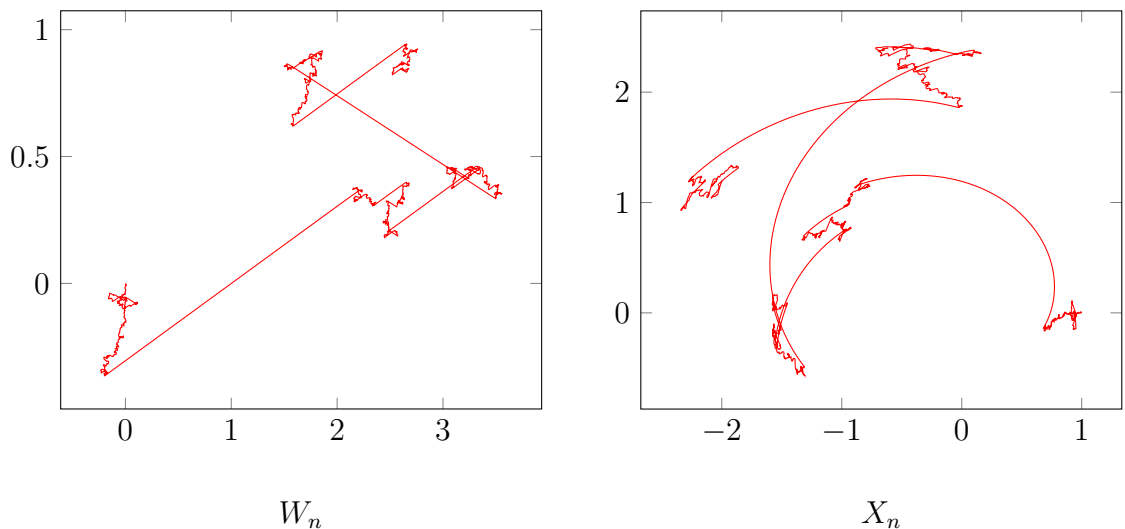
\begin{figure}[ht]
    \[
        \begin{tikzpicture}
            \begin{axis}[
                name=W,
                width = 3.14in,
                xticklabel style={/pgf/number format/fixed},
                yticklabel style={/pgf/number format/fixed}
                ]
                \addplot[red] table [x=Wx, y=Wy] {fs.data};
            \end{axis} 
            \node[anchor=north] at (W.south) {$\begin{matrix} \\ \\ W_n \end{matrix}$};
        \end{tikzpicture}
        \qquad
        \begin{tikzpicture}
            \begin{axis}[
                name=X,
                width = 3.14in,
                xticklabel style={/pgf/number format/fixed},
                yticklabel style={/pgf/number format/fixed}
                ]
                \addplot[red] table [x=Xx, y=Xy] {fs.data};
            \end{axis} 
            \node[anchor=north] at (X.south) {$\begin{matrix} \\ \\ X_n \end{matrix}$};
        \end{tikzpicture}
    \]
    \caption{Realisation of $W_n$ and $X_n$ with $n=10^4$ points}
    \label{fig:plots}
\end{figure}

Topologies naturally suited for convergence in
Example~\ref{ex:circle} were recently introduced in~\cite{CF19}.
These topologies are a generalisation of the Skorokhod $\cSM_1$ topology which allow for convenient control of differential equations.
Briefly, jumps of a c\`adl\`ag process are interpreted as an
instant travel along prescribed continuous paths which depend only on
the start and end points of the jump.
The full ``pathspace'' thus becomes the set of pairs $(X,\phi)$, where $X\colon[0,1]\to\R^d$ is a c{\`a}dl{\`a}g path and $\phi$ is a so-called \textit{path function}~\cite{Chevyrev18} which maps each jump $(X(t-),X(t))$ to a continuous path from $X(t-)$ to $X(t)$.
It is often convenient to fix $\phi$, which in turn determines a topology on c{\`a}dl{\`a}g paths; if $\phi$ is linear, one recovers the
$\cSM_1$ topology.
% We remark here that these spaces bear a resemblance to the enhanced space of paths $F$ introduced by Whitt~\cite[Section~15]{Whitt02}.
For our purposes, it is necessary to adapt the spaces introduced in~\cite{CF19},
and we give details in Sections~\ref{sec:prep} and~\ref{sec:rp}.

The paper is organised as follows. In Section~\ref{sec:prep}, we introduce the necessary prerequisites on generalised Skorokhod topologies and Marcus differential equations in order to state rigorously our main abstract result
Theorem~\ref{thm:rig}.
The proof is given at the end of Section~\ref{sec:rp} after introducing the necessary results from rough path theory.
In Sections~\ref{sec:GM} to~\ref{sec:driver},
we show that a class of nonuniformly expanding dynamical systems, including~\eqref{eq:LSV} and~\eqref{eq:PM}, satisfies the conclusions of
Theorems~\ref{thm:PM} and~\ref{thm:tight} which are in turn the main hypotheses of Theorem~\ref{thm:rig}.
Section~\ref{sec:GM} deals with a class of uniformly expanding maps known as Gibbs-Markov maps, and Section~\ref{sec:induce} provides the inducing step to pass from uniformly expanding maps to nonuniformly expanding maps.
In Section~\ref{sec:driver}, we apply the results of Sections~\ref{sec:GM} and~\ref{sec:induce} to
the intermittent maps~\eqref{eq:LSV} and~\eqref{eq:PM}.
The precise result on homogenisation of the system~\eqref{eq:fs} with fast dynamics given by either~\eqref{eq:LSV} or~\eqref{eq:PM} is stated in Corollary~\ref{cor:PM}.

\vspace{-2ex}
\paragraph{Notation}
We use ``big O'' and $\lesssim$ notation interchangeably, writing $a_n=O(b_n)$ or $a_n\lesssim b_n$
if there is a constant $C>0$ such that
$a_n\le Cb_n$ for all sufficiently large $n$.
As usual, $a_n=o(b_n)$ means that $\lim_{n\to\infty}a_n/b_n=0$
and $a_n\sim b_n$ means that $\lim_{n\to\infty}a_n/b_n=1$.

\vspace{-2ex}
\paragraph{Acknowledgements}
I.C. was funded by a Junior Research Fellowship of St John's College, Oxford while this work was carried out.
P.K.F. acknowledges partial support from the ERC, CoG-683164, the Einstein Foundation Berlin, and DFG research unit FOR2402.
A.K. and I.M. acknowledge partial support from the European Advanced Grant StochExtHomog (ERC AdG 320977).
A.K. is also supported by an Engineering and Physical Sciences Research Council grant
EP/P034489/1.
We would like to thank the anonymous referees for their helpful and detailed comments.

\section{Setup and result}
\label{sec:prep}

In this section, we collect the material necessary to formulate our main abstract result Theorem~\ref{thm:rig}.

\subsection{Skorokhod topologies}

Let $D = D([0,1],\R^d)$ denote the Skorokhod space of c{\`a}dl{\`a}g functions,
i.e.\ the set of functions $X \colon [0,1] \to \R^d$ which are right-continuous with left limits.
For $X\in D$ and $t\in [0,1]$, we denote $X(t-) = \lim_{s \nearrow t} X(s)$, with the convention that $X(0-) = X(0)$.

Let $\Lambda$ denote the set of all increasing bijections
$\lambda \colon [0,1] \to [0,1]$ and let $\id\in\Lambda$ denote the identity map $\id(t)=t$.
For $X_1, X_2 \in D$, let
$\bsigma_\infty(X_1, X_2)$ be the Skorokhod distance
\[
  \bsigma_\infty(X_1, X_2)
  = \inf_{\lambda\in \Lambda} \max \{
    \|\lambda - \id\|_\infty, \|X_1 \circ \lambda - X_2\|_\infty 
  \}\;,
\]
where $\|X\|_\infty = \sup_{t\in[0,1]}|X(t)|$.
The topology on $D$ induced by $\bsigma_\infty$ is known as the strong $\cJ_1$, or $\cSJ_1$, topology.

Another important topology on $D$ is the strong $\cM_1$, or $\cSM_1$, topology
defined as follows.
For $X\in D$ consider the ``completed'' graph
$\Gamma(X) = \{(t,x) \in [0,1]\times \R^d : x\in [X(t-),X(t)]\}$,
and let $\Lambda^*(X)$ be the set of all continuous bijections
$(\lambda,\gamma) \colon [0,1] \to \Gamma(X)$ with $\lambda(0)=0$.
%such that, if $s<t$, then $\lambda(s)<\lambda(t)$, or $\lambda(s)=\lambda(t)$ and $|\gamma(t)-X(\lambda(t))| < |\gamma(s)-X(\lambda(t))|$.
Then the $\cSM_1$ topology on $D$ is induced by the metric
\[
    d_{\cSM_1}(X_1, X_2)
    = \inf_{\substack{(\lambda_i,\gamma_i)\in\Lambda^*(X_i)\\i=1,2}}
    \max\{\|\lambda_1-\lambda_2\|_\infty, \|\gamma_1- \gamma_2\|_\infty \}
    \; .
\]

 \subsection{Generalised \texorpdfstring{$\cSM_1$}{SM1} topologies}
 \label{subsec:generalised_SM1}
 
We now introduce generalisations of the $\cSM_1$ topology from~\cite{CF19}.
%which allow us state our main abstract result.

For $1 \le p<\infty$, recall the $p$-variation $\|u\|_{p\var}$ of $u\colon [0,1]\to\R^d$ defined by~\eqref{eq:pvar}.
We furthermore denote
$\vvvert u \vvvert_{p\var} = |u(0)| + \|u\|_{p\var}$.
Let
\[
D^{p\var} = \{u\in D([0,1],\R^{d}) : \|u\|_{p\var}<\infty\}
\]
and $C^{p\var}([0,1],\R^d)\subset D^{p\var}$ be the set of $u\in D^{p\var}$ which are continuous.
Let $\bsigma_{p\var}$ denote the Skorokhod-type $p$-variation on $D^{p\var}$:
\[
  \bsigma_{p\var}(X_1, X_2) 
  = \inf_{\lambda\in\Lambda} \max \{
    \|\lambda - \id\|_\infty, \vvvert X_1 \circ \lambda - X_2 \vvvert_{p\var} 
  \}\;.
\]

\begin{defn}\label{def:path_function}
A \emph{path function} on $\R^d$ is a map $\phi \colon J \to C([0,1], \R^d)$, where $J \subset \R^d\times \R^d$, for which $\phi(x,y)(0) = x$ and $\phi(x,y)(1) = y$ for all $(x,y)\in J$.
For a path $X \in D([0,1], \R^d)$, we say that $t\in[0,1]$ is a jump time of $X$ if $X(t-) \neq X(t)$.
  A pair $(X,\phi)$ is called admissible if all the jumps of $X$ are in the domain of definition of $\phi$,
  i.e.\ $(X(t-),X(t)) \in J$ for all jump times $t$ of $X$.
  We denote by $\bsD([0,1],\R^d)$ the space of admissible pairs $(X, \phi)$.
  We let $\sD([0,1],\R^d) = \bsD([0,1],\R^d) / \sim$, where $(X_1, \phi_1) \sim (X_2, \phi_2)$
  if $X_1 = X_2$ and $\phi_1(X_1(t-),X_1(t))$ is a reparametrisation of $\phi_2(X_1(t-),X_1(t))$ for all jump times $t$ of $X_1$.
%  For a fixed path function $\phi$, let 
%  \[
%      \sD_\phi([0,1],\R^d)
%      = \{ X \in D([0,1], \R^d) : (X, \phi) \text{ is admissible} \}
%      \;.
%  \]
\end{defn}

\begin{rmk}
    We often keep implicit the interval $[0,1]$ and $\R^d$, as well as $J$, when they are clear from the context.
    We allow $J$ to be a strict subset of $\R^d\times \R^d$ since this case arises naturally when considering driver-solution pairs for canonical differential equations, see the final discussion in Section~\ref{subsec:Marcus}. 
\end{rmk}

A simple path function which shall play an important role is the following.

\begin{defn}
The \emph{linear path function} on $\R^{k}$ is the map $\ell_k\colon\R^k\times\R^k \to C([0,1],\R^k)$
defined by $\ell_k(x,y)(t) = x+t(y-x)$ for all $x,y\in \R^{k}$.
\end{defn}

Fix a sequence $r_1, r_2, \ldots > 0$ with $\sum_j r_j < \infty$.
Given $(X, \phi) \in \bsD$ and 
$\delta > 0$, let $X^{\phi, \delta} \in C([0,1], \R^d)$ denote the
continuous version of $X$, where the $k$-th largest jump is made
continuous using $\phi$ on a fictitious time interval of length
$\delta r_k$. More precisely:
\begin{itemize}
    \item Let $m\ge0$ be the number of jumps (possibly infinite) of $X$.
        We order the jump times $\{t_j\}_{j=1}^m$ 
        so that $|X(t_k) - X(t_k-)| \geq |X(t_{k+1}) - X(t_{k+1}-)|$
        for each $k$, with $t_k < t_{k+1}$ in case of equality.
    \item Let $r= \sum_{j=1}^m r_j$ and define the map
        \begin{equation}
            \label{eq:tau}
            \tau \colon [0,1] \to [0, 1+\delta r]\;, \quad 
            \tau(t) = t + \sum_k \delta r_k 1_{\{t_k \leq t\}}\;.
        \end{equation}
    \item Define an intermediate process $\hX \in C([0,1+\delta r], \R^d)$,
        \[
            \hX (t) = \begin{cases}
                X(s) & \text{if } t = \tau(s) \text{ for some } s \in [0,1]\;, \\
                \phi(X(t_k-), X(t_k))\bigl(\frac{s - \tau(t_k-)}{\delta r_k}\bigr)
                     & \text{if } t \in [\tau(t_k-), \tau(t_k)) \text{ for some } k\;.
            \end{cases}
        \]
    \item Finally, let $X^{\phi, \delta}(t) = \hX(t (1+\delta r))$,
        scaling the domain of $\hX$ from $[0, 1+\delta r]$ to $[0,1]$. 
\end{itemize}

For $(X, \phi) \in \sD([0,1],\R^{d})$ and $p \geq 1$, let
\[
  \|(X, \phi)\|_{p\var} = \|X^{\phi, 1}\|_{p\var}
  \;.
\]
Note that $\|(X, \phi)\|_{p\var}$ is well-defined since $\|X^{\phi, 1}\|_{p\var}$ depends on
neither the parametrisation of $\phi$, nor the sequence $\{r_k\}$. 
Let
\[
  \sD^{p\var} = \{
    (X, \phi) \in \sD : \|(X, \phi)\|_{p\var} < \infty
  \}
  \;.
\]
Given $(X_1, \phi_1)$ and $(X_2, \phi_2)$ in $\sD^{p\var}$, let
\[
  \balpha_{p\var} ((X_1, \phi_1), (X_2, \phi_2))
  = \lim_{\delta \to 0} \bsigma_{p\var} (X_1^{\phi_1, \delta}, X_2^{\phi_2, \delta})\;,
\]
which defines a metric on $\sD^{p\var}$~\cite[Remark~3.8]{CF19}.

\subsection{Marcus differential equations}
\label{subsec:Marcus}

For $\gamma>0$, let $C^\gamma(\R^m,\R^n)$ denote the space of functions $b \colon \R^m \to \R^n$ such that
\[
    \|b\|_{C^\gamma}
    = \max_{|\alpha|=0,\ldots,\floor{\gamma}} \|D^\alpha b\|_\infty
    + \sup_{x,y\in \R^m} \max_{|\alpha|=\floor\gamma}
    \frac{|D^\alpha b(x) - D^\alpha b(y)|}{|x-y|^{\gamma-\floor\gamma}}
    < \infty\;.
\]
Note that our notation is slightly non-standard since $b\in C^N$ for $N\in \N$ implies only that the $(N-1)$-th derivative of $b$ is Lipschitz rather than continuous.

Suppose that $W \in D^{p\var}([0,1], \R^{d})$ with $1\leq p < 2$,
and that $a\in C^\beta(\R^m,\R^m)$ and $b \in C^{\gamma}(\R^m,\R^{m\times d})$ 
with $\beta > 1$ and $\gamma > p$.
Under these conditions, we can define and solve (in a purely deterministic way)
a Marcus-type differential equation
\begin{equation}\label{eq:Marcus_W}
    \mrd X = a(X) \mrd t + b(X) \diamond \mrd W
    \; .
\end{equation}
The solution is obtained as follows from the theory of
continuous rough differential equations (RDEs) in the Young regime~\cite{Lyons94,FV10,FH14}.
Consider the c{\`a}dl{\`a}g path $\tW \colon [0,1]\to \R^{1+d}$ given by $\tW(t) = (t,W(t))$.
Using the notation of Section~\ref{subsec:generalised_SM1}, consider the continuous path
$\tW^{\phi,1} \colon [0,1+r] \to \R^d$, where $\phi=\ell_{1+d}$ is the linear path function
on $\R^{1+d}$.
Let $\tau \colon [0,1] \to [0,1+r]$ be the corresponding map given by~\eqref{eq:tau}.
Then $\|\tW^{\phi,1}\|_{p\var}\lesssim \|W\|_{p\var}$ (see e.g.~\cite[Corollary~A.6]{Chevyrev18}), and therefore one can solve the (continuous) RDE
\[
\mrd \tX = (a,b)(\tX) \mrd \tW\;.
\]
The solution is a continuous path $\tX \colon [0,1+r] \to \R^m$ of finite $p$-variation.
The solution to~\eqref{eq:Marcus_W} is the c{\`a}dl{\`a}g path $X \colon[0,1]\to \R^d$
given by $X(t) = \tX(\tau(t))$.
We discuss a more general interpretation of this equation in Section~\ref{subsec:rough_int}.

\begin{rmk}
In the case that $W$ is a semimartingale, one can verify that $X$ is the solution to the classical Marcus SDE (see~\cite[Proposition~4.16]{CF19} for the general case $p>2$ but with stronger regularity assumptions on $a,b$; the proof carries over to our setting without change).
\end{rmk}

To properly describe solutions of~\eqref{eq:Marcus_W} and regularity
of the solution map $W \mapsto X$,
% and their modes of convergence to~\eqref{eq:Marcus_W},
it is not enough to look at $X$ as an element of $D([0,1],\R^m)$.
As in Example~\ref{ex:circle}, one may have $X \equiv 0$ say, but
with sizeable jumps in fictitious time.

Following~\cite{CF19}, we consider the
driver-solution space $D([0,1], \R^{d + m})$, made to contain the pairs $(W, X)$, and introduce a new path function on $\R^{d+m}$.

\begin{defn}\label{def:phi_b_def}
Consider $b\in C^{1}(\R^m,\R^{m\times d})$.
For $x \in \R^m$ and $\Phi \in C^{1\var}([0,1],\R^{d})$,
let $\pi_b[x;\Phi] \in C^{1\var}([0,1], \R^m)$ denote the solution $\Pi$ of the equation
\begin{equation*}
    \mrd \Pi = b(\Pi) \mrd \Phi
    \;, \quad
    \Pi(0) = x
    \;.
\end{equation*}
We define the path function $\phi_b$ on $\R^{d+m}$ by
\begin{equation}
    \phi_b\bigl((w_1,x_1), (w_2,x_2) \bigr)(t) = \bigl(\ell_d(w_1,w_2)(t), \pi_b[x_1;\ell_{d}(w_1,w_2)](t)\bigr)
    \; ,
\end{equation}
which is defined on
\[
    J_b = \bigl\{\bigl((w_1,x_1), (w_2,x_2) \bigr)
    : w_1,w_2\in\R^{d}\;,\; \pi_b[x_1;\ell_{d}(w_1,w_2)](1) = x_2 \bigr\}
    \;.
\]
\end{defn}

Note that $J_b$ is a strict subset of $\R^{d+m}\times \R^{d+m}$.
Observe that if $X$ solves~\eqref{eq:Marcus_W}, then $((W,X),\phi_b)\in \sD^{p\var}([0,1],\R^{d+m})$ and the path function $\phi_b$ describes how the discontinuities of $(W,X)$
are traversed in fictitious time.

\subsection{Main abstract result}
\label{sec:rig}

Now we are ready for a rigorous formulation of the main abstract result.
Consider the fast-slow system~\eqref{eq:fs}
with initial condition $x^{(n)}_0 = \xi_n$ such that $\lim_{n\to\infty}\xi_n = \xi$.
Suppose that $\alpha\in(1,2)$,
$\alpha'\in[\alpha,2)$,
$v\in L^\infty(Y,\R^d)$,
$a\in C^{\beta}(\R^m,\R^m)$, $b \in C^\gamma(\R^m,\R^{m\times d})$
for some $\beta>1$, $\gamma > \alpha'$.
Define $W_n$ as in~\eqref{eq:Wn} and $X_n(t) = x_{\floor{nt}}^{(n)}$.

\begin{thm}
    \label{thm:rig}
Suppose that  
\begin{itemize}
\item $W_n\to_w L$ 
in $D([0,1],\R^d)$ with the
$\cSM_1$  topology as $n\to\infty$ for some process $L.$
\item $\|W_n\|_{p\var}$ is tight for all $p>\alpha'$.
\end{itemize}
Then, for all $p>\alpha'$, it holds that $\|L\|_{p\var}<\infty$ a.s.\ and
    \[
        ((W_n, X_n), \ell_{d+m}) \to_w ((L, X), \phi_{b})
        \qquad \text{as} \qquad n \to \infty
    \]
in $(\sD^{p\var}([0,1],\R^{d +m}), \balpha_{p\var})$, where $X$ is the solution of the Marcus differential equation
    \begin{equation}\label{eq:Marcus}
    \mrd X = a(X) \mrd t + b(X) \diamond \mrd L
        \;, \qquad X(0) = \xi \in \R^m
        \;.
    \end{equation}
\end{thm}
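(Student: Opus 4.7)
The plan is to interpret the discrete fast--slow scheme as (approximately) a Marcus-type differential equation driven by $W_n$, and to invoke continuity of the Marcus solution map in the generalised $p$-variation topology $(\sD^{p\var}, \balpha_{p\var})$. Throughout, fix $p\in(\alpha', \gamma)$, noting that $p < 2$ since $\alpha'<2$, so we are in the Young regime.

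First, I would upgrade the convergence $W_n \to_w L$ in $\cSM_1$ to convergence of $(W_n, \ell_d) \to_w (L, \ell_d)$ in $(\sD^{p\var}, \balpha_{p\var})$. This is a standard interpolation-type argument: the tightness of $\|W_n\|_{p'\var}$ for some $p' \in(\alpha', p)$, combined with $\cSM_1$-convergence, implies tightness in the stronger $\balpha_{p\var}$ topology by a Skorokhod representation together with dominated convergence. Uniqueness of limits identifies the limit as $(L, \ell_d)$, and as a byproduct $\|L\|_{p\var} < \infty$ a.s.

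Next, I would exhibit $(W_n, X_n)$ as an approximate Marcus solution-driver pair. Write $\tW_n(t) = (t, W_n(t))$ and consider the continuous path $\tW_n^{\ell_{1+d},1}$ obtained by linearly interpolating jumps of $\tW_n$ in fictitious time. The Young ODE $\mrd \tX = (a,b)(\tX) \mrd \tW_n^{\ell_{1+d}, 1}$ admits a unique continuous solution; reparametrising back to $[0,1]$ yields a c{\`a}dl{\`a}g process $\check X_n$ such that $((W_n, \check X_n), \phi_b) \in \sD^{p\var}([0,1], \R^{d+m})$ is the exact Marcus solution driven by $W_n$. Since $v \in L^\infty$, each jump of $W_n$ has size at most $\|v\|_\infty n^{-1/\alpha}$, so a Taylor expansion of the Marcus step and comparison with the Euler step of~\eqref{eq:fs} gives a per-step discrepancy of order $|\Delta W_n|^2 = O(n^{-2/\alpha})$; summing over the $n$ steps yields a total discrepancy between $X_n$ and $\check X_n$ of order $n^{1 - 2/\alpha}\to 0$, both uniformly and in $p$-variation after standard interpolation with the tight bound on $\|W_n\|_{p\var}$.

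Finally, I would apply the continuity of the Young--Marcus solution map from Section~\ref{sec:rp}, which under our regularity hypotheses ($\beta>1$, $\gamma > p$) asserts that the map $(W, \ell_d) \mapsto ((W, \check X), \phi_b)$ is continuous from $(\sD^{p\var}([0,1], \R^d), \balpha_{p\var})$ to $(\sD^{p\var}([0,1], \R^{d+m}), \balpha_{p\var})$. Applying this to the sequence $(W_n, \ell_d) \to_w (L, \ell_d)$ yields $((W_n, \check X_n), \phi_b) \to_w ((L, X), \phi_b)$, and the Euler--Marcus discrepancy estimate then upgrades this to the claimed convergence of $((W_n, X_n), \ell_{d+m})$. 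The main obstacle is reconciling the naive linear path function $\ell_{d+m}$ used on the discrete pair with the Marcus path function $\phi_b$ in the limit; this amounts to showing that the piecewise-linear curve traced by $X_n$ in fictitious time during each aggregating macroscopic jump of $L$ converges to the corresponding $\pi_b$-integral curve, and it is precisely here that the bound $|\Delta W_n| = O(n^{-1/\alpha})$ from $v \in L^\infty$ is crucial.
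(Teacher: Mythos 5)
Your proposal mirrors the paper's proof in every essential respect: both identify $X_n$ exactly as the forward (Euler) RDE solution driven by the step functions $V_n(t)=n^{-1}\floor{nt}$ and $W_n$; both compare it to the geometric (Marcus) solution via a per-jump discrepancy of order $|\Delta W|^2$ (Lemma~\ref{lem:Wi}/\ref{lem:aoit}); and both conclude by continuity of the geometric RDE solution map in the generalised Skorokhod $p$-variation topology, with the smallness of $W_n$'s jumps serving to pass between $\ell_{d+m}$ on the discrete pair and $\phi_b$ in the limit. This is precisely Theorem~\ref{thm:CFW} and its proof. One small presentational looseness: ``summing over the $n$ steps yields a total discrepancy $O(n^{1-2/\alpha})$'' is not literally a sum of per-step errors — the errors propagate and must be controlled via Lipschitz dependence of the RDE flow on initial conditions, which is where the constant depending on $\|W_n\|_{p\var}$ enters in Lemma~\ref{lem:aoit}; you implicitly acknowledge this by mentioning the tight $p$-variation bound, but it is worth being explicit.

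The genuine gap is the treatment of the drift vector field $a$. You work uniformly in $p$-variation, fixing $p\in(\alpha',\gamma)$ and invoking Young theory for the augmented driver $\tW_n=(t,W_n)$. For that to give a well-posed and locally Lipschitz solution map you would need \emph{both} coefficients $(a,b)$ to lie in $C^\gamma$ with $\gamma > p$. But the theorem only assumes $a\in C^\beta$ for some $\beta>1$, and since $p>\alpha'>1$, it is entirely possible that $\beta < p$, in which case a pure $p$-variation Young framework breaks down. The paper resolves this by introducing the mixed $(q,p)$-variation norm $\|\cdot\|_{(q,p)\var}$ — $q$-variation for the time component (which is absolutely continuous, so has finite $1$-variation) and $p$-variation for the noise — together with the balance conditions~\eqref{eq:conditions_on_b} on $\beta,\gamma,q,p$. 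Taking $q$ close to $1$, the drift only needs $\beta>1$. Without this mixed framework your argument would implicitly be assuming more regularity on $a$ than the theorem does; with it, the argument closes exactly as you sketch.
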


The proof of Theorem~\ref{thm:rig} is given at the end of Section~\ref{sec:rp}.

\begin{rmk}
\begin{enumerate}[label=(\alph*)]
\item The property $\|L\|_{p\var}<\infty$ a.s.\ together with $\gamma>\alpha'$ guarantees that the Marcus equation~\eqref{eq:Marcus} admits a unique solution for a.e.\ realisation of $L$.
In our applications, 
$L$ is an $\alpha$-stable L{\'e}vy process, for which the finiteness of $\|L\|_{p\var}$ is classical,
and we take $\alpha'=\alpha$.
We introduce the parameter $\alpha'$ to highlight that the threshold for the value of $p$ in the second condition of Theorem~\ref{thm:rig} does not need to be the same $\alpha$ as in~\eqref{eq:Wn}.

\item The drift vector field $a$ plays no role in the definition of $\phi_b$.
    This is expected since the driver $V_n(t)=n^{-1}\floor{tn}$ corresponding
    to $a$ in the RDE solved by $X_n$ (see the proof of Theorem~\ref{thm:rig} below)
    converges in $q$-variation for every $q>1$ to a process with no jumps.
    
    \item Since the limiting process $L$ in general has jumps,
    it is crucial that we pair $(L,X)$ with the path function $\phi_b$.
    In contrast, the jumps of $(W_n,X_n)$ are of magnitude at most $n^{-1/\alpha}$, so $(W_n,X_n)$ is almost a continuous path for large $n$;
    % The result of Theorem~\ref{thm:rig} remains true if $\phi_b$ on the left
    % hand side is replaced by a linear path function in $\R^d \times \R^m$,
% or any ``reasonable'' path function $\phi_n$, possibly depending on $n$.
    % For example, it is sufficient that $((W_n,X_n),\phi_n)$ is admissible
    % and that all $\phi_n$ have the same $p$-variation modulus,
    % see~\cite[Definition~2.4]{CF19}).
    we make the reference to $\ell_{d+m}$ only for convenience (cf.~\eqref{eq:phi_to_linear} below).
\end{enumerate}
\end{rmk}

Recall that a stochastic process $(L_t)_{t \in[0,1]}$ is called \emph{stochastically continuous} if, for all $t\in[0,1]$, $L_s\to L_t$ in probability as $s\to t$.
Note that L{\'e}vy processes are stochastically continuous by definition.

\begin{cor}
    In the setting of Theorem~\ref{thm:rig}, suppose further that the process $L$ is stochastically continuous.  Then $X_n \to X$ in the sense of finite dimensional distributions.
\end{cor}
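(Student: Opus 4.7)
The plan is to reduce finite-dimensional convergence to evaluation of the limiting paths at almost-sure continuity points, exploiting stochastic continuity of $L$.

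First, I would extract $\cSM_1$ convergence of the $X$-component from the $\balpha_{p\var}$ convergence of Theorem~\ref{thm:rig}. As is built into the construction of the generalised Skorokhod metric in~\cite{CF19}, the projection $(\sD^{p\var}, \balpha_{p\var}) \to (D([0,1],\R^m), d_{\cSM_1})$, $((W,X), \phi) \mapsto X$, is continuous: if $(X_n^{\phi_n,\delta})$ converges to $(X^{\phi,\delta})$ in $\bsigma_{p\var}$ for each small $\delta$, then letting $\delta\to 0$ yields convergence of the c\`adl\`ag projections in $\cSM_1$. Hence the continuous mapping theorem gives $X_n \to_w X$ in $(D([0,1],\R^m), d_{\cSM_1})$.

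Second, I would argue that, for any fixed $t \in [0,1]$, the process $X$ is almost surely continuous at $t$. The Marcus interpretation of~\eqref{eq:Marcus} via the path function $\phi_b$ of Definition~\ref{def:phi_b_def} ensures that
\[
X(t) - X(t-) = \pi_b[X(t-);\,\ell_d(L(t-), L(t))](1) - X(t-)\;,
\]
which vanishes whenever $L(t-) = L(t)$, because then $\ell_d(L(t-), L(t))$ is a constant path and the ODE $\mrd \Pi = b(\Pi) \mrd \Phi$ in Definition~\ref{def:phi_b_def} has trivial solution. Stochastic continuity of $L$ gives $\PP(L(t-) = L(t)) = 1$, so $\PP(X(t-) = X(t)) = 1$.

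Finally, for joint convergence at times $t_1 < \ldots < t_k$ in $[0,1]$, I would observe that the evaluation map $X \mapsto (X(t_1), \ldots, X(t_k))$ from $(D, d_{\cSM_1})$ to $\R^{mk}$ is continuous at every $X$ whose continuity set contains $\{t_1, \ldots, t_k\}$. By the previous step applied to each $t_i$ (and using that a finite union of null sets is null), the path $X$ satisfies this condition with probability one. Another application of the continuous mapping theorem then yields the desired finite-dimensional convergence $(X_n(t_1), \ldots, X_n(t_k)) \to_w (X(t_1), \ldots, X(t_k))$. The only nontrivial ingredient is the second step, ensuring that jumps of $X$ occur precisely at jump times of $L$; this is immediate from the Marcus formulation, whereas without stochastic continuity (i.e.\ if $L$ had fixed times of discontinuity), joint evaluation would fail to be continuous on a set of positive probability and fdd convergence could genuinely break down.
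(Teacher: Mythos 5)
Your second and third steps are fine, and they match the spirit of the paper's proof, but your first step is genuinely wrong: the projection $((W,X),\phi) \mapsto X$, from $(\sD^{p\var},\balpha_{p\var})$ to $(D([0,1],\R^m),d_{\cSM_1})$, is \emph{not} continuous, and consequently $X_n$ need not converge to $X$ in $\cSM_1$. This is precisely the phenomenon illustrated in Example~\ref{ex:circle}: there $W_n\to W$ in $\cSM_1$, the pairs $((W_n,X_n),\ell)$ converge to $((W,X),\phi_b)$ in the generalised metric, yet $X_n$ fails to converge to $X$ in any of the Skorokhod topologies --- even when $X$ is continuous (take $\theta=2\pi$). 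The paper flags this explicitly in the remark immediately following the corollary (``we do not expect that $X_n\to_w X$ in any of the Skorokhod topologies''). The heuristic ``$X_n^{\phi_n,\delta}\to X^{\phi,\delta}$ in $\bsigma_{p\var}$, hence $X_n\to X$ in $\cSM_1$'' fails because the continuous path $X^{\phi_b,\delta}$ bridges jumps of the pair $(W,X)$ using the \emph{curved} path $\pi_b$, and $X_n$ hugs that curved bridge rather than the straight chord that $\cSM_1$ requires; the limit $\delta\to 0$ collapses the fictitious-time excursion but does not turn it into a linear interpolation.

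The correct route, and the one the paper takes, is to bypass $\cSM_1$ entirely and use that the fixed-time evaluation map $(Y,\phi)\mapsto (Y(t_1),\ldots,Y(t_k))$ is already continuous on $(\sD^{p\var},\balpha_{p\var})$ at any $(Y,\phi)$ with $Y$ continuous at each $t_j$ (this is~\cite[Lemma~2.12]{CF19}). Combined with your (correct) observation that every jump time of $X$ is a jump time of $L$ and that stochastic continuity plus the c\`adl\`ag property make each fixed $t$ a.s.\ a continuity point of $L$, the continuous mapping theorem applied directly to the conclusion of Theorem~\ref{thm:rig} gives the result. So your diagnosis that ``the only nontrivial ingredient is the second step'' is backwards: step 2 is routine, while step 1 as stated is false, and the actual content is that evaluation maps are continuous on the generalised path space, not that the generalised topology dominates $\cSM_1$ componentwise.
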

\begin{proof}
    Consider $0 \leq t_1 < \cdots < t_k \leq 1$.
    The map
    \begin{equation}\label{eq:time_proj_map}
        (Y, \phi) \mapsto (Y(t_1), \ldots, Y(t_k))
        \; ,
        \qquad
        (\sD^{p\var}([0,1],\R^{d+m}), \balpha_{p\var}) \to \R^{(d+m)k}
    \end{equation}
    is continuous at $(Y,\phi)$ whenever the path $Y$ is continuous at all $t_j$, see~\cite[Lemma~2.12]{CF19}.
    Furthermore, if $t\in[0,1]$ is a continuity point of $L$, then it is also a continuity point of the solution $X$ to~\eqref{eq:Marcus}.
    Since $L$ is c{\`a}dl{\`a}g and stochastically continuous, any fixed $t\in[0,1]$ is a.s.\ a continuity point of $L$ (see e.g.\ the proof of~\cite[Lemma~2.3.2]{Applebaum09}),
    $((L,X),\phi_b)$ is a.s.\ a continuity point of the map~\eqref{eq:time_proj_map}.
    In particular, by Theorem~\ref{thm:rig} and the continuous mapping theorem, $(X_n(t_1), \ldots, X_n(t_k))$ converges in
    law to $(X(t_1), \ldots, X(t_k))$, as required.
\end{proof}

\begin{rmk}
    As in Example~\ref{ex:circle}, we do not expect that $X_n \to_w X$ 
    in any of the Skorokhod topologies, or that $f(X_n) \to_w f(X)$ for
    certain standard functionals $f \colon D \to \R$ that are continuous with respect to the Skorokhod topologies,
    such as $f(X) = \|X\|_\infty$. Instead we have for example that $\|\tX_n\|_\infty \to_w \|\tX\|_\infty$,
    where $\tX_n$ and $\tX$ are the corresponding components of the continuous paths $(W_n, X_n)^{\ell_{d+m}, 1}$
    and $(W, X)^{\phi_b, 1}$.
\end{rmk}

\section{Rough path formulation}
\label{sec:rp}

In this section we expand the material in Section~\ref{sec:prep} in order to formulate and prove an abstract convergence result, Theorem~\ref{thm:CFW}, from which Theorem~\ref{thm:rig} follows.

 \subsection{Generalised \texorpdfstring{$\cSM_1$}{SM1} topologies with mixed variation}

We use a modified version of the topologies from~\cite{CF19} suitable for handling differential equations with drift.
We continue using notation from Section~\ref{sec:prep}.

For $1\le p,q<\infty$, we define the mixed $(q,p)$-variation for $u = (u^0,u^1,\ldots, u^d) = (u^0,\bar u) \colon [0,1]\to\R^{1+d}$ by
\[
\|u\|_{(q,p)\var} = \|u^0\|_{q\var} + \|\bar u\|_{p\var}\;.
\]
Let
\[
D^{(q,p)\var} = \{u\in D([0,1],\R^{1+d}) : \|u\|_{(q,p)\var}<\infty\}
\]
and $C^{(q,p)\var}([0,1],\R^{1+d})\subset D^{(q,p)\var}$ be the set of $u\in D^{(q,p)\var}$ which are continuous.
We furthermore denote
$\vvvert u \vvvert_{(q,p)\var} = |u(0)| + \|u\|_{(q,p)\var}$ and define
\[
  \bsigma_{(q,p)\var}(X_1, X_2) 
  = \inf_{\lambda\in\Lambda} \max \{
    \|\lambda - \id\|_\infty, \vvvert X_1 \circ \lambda - X_2 \vvvert_{(q,p)\var} 
  \}\;.
\]
Given $(X_1, \phi_1)$ and $(X_2, \phi_2)$ in $\bsD$, let
\[
  \balpha_\infty ((X_1, \phi_1), (X_2, \phi_2)) = \lim_{\delta \to 0} \bsigma_\infty (X_1^{\phi_1, \delta}, X_2^{\phi_2, \delta})\;.
\]
Following~\cite[Lemma~2.7]{CF19}, the limit exists, is independent of the choice
of the sequence $r_k$, and is invariant under reparametrisation of the path functions.
In particular, $\balpha_\infty$ induces a pseudometric on $\sD$.

For $(X, \phi) \in \sD([0,1],\R^{1+d})$, let
\[
  \|(X, \phi)\|_{(q,p)\var} = \|X^{\phi, 1}\|_{(q,p)\var}
  \;.
\]
As before, note that $\|(X, \phi)\|_{(q,p)\var}$ is well-defined since $\|X^{\phi, 1}\|_{(q,p)\var}$ does not depend on the parametrisation of $\phi$, nor the sequence
$\{r_k\}$. 
Let
\[
  \sD^{(q,p)\var} = \{
    (X, \phi) \in \sD : \|(X, \phi)\|_{(q,p)\var} < \infty
  \}
  \;.
\]
Given $(X_1, \phi_1)$ and $(X_2, \phi_2)$ in $\sD^{(q,p)\var}$, let
\[
  \\
  \balpha_{(q,p)\var} ((X_1, \phi_1), (X_2, \phi_2))
  = \lim_{\delta \to 0} \bsigma_{(q,p)\var} (X_1^{\phi_1, \delta}, X_2^{\phi_2, \delta})\;,
\]
which is well-defined and induces a metric on $\sD^{(q,p)\var}$ (cf.~\cite[Remark~3.8]{CF19}).

\subsection{Differential equations with c\`adl\`ag drivers}
\label{subsec:rough_int}

For $\beta,\gamma>0$, denote by $C^{\beta,\gamma}$ the space of all $b=(b^0,b^1,\ldots, b^d) \colon \R^m \to \R^{m \times (1+d)}$ such that
\[
\|b\|_{C^{\beta,\gamma}} = \|b^0\|_{C^{\beta}} + \max_{i=1,\ldots, d}\|b^i\|_{C^{\gamma}} < \infty\;.
\]
Suppose $1\leq q \leq p < 2$ and that $b \in C^{\beta,\gamma}$ with $\beta>q$ and $\gamma > p$ such that
\begin{equation}\label{eq:conditions_on_b}
\frac{\beta-1}{p} + \frac1q > 1 \quad \text{ and } \quad 
\frac{\gamma-1}{q} + \frac1p > 1 \;.
\end{equation}
\begin{rmk}
See~\cite[Remark~12.7]{FV10} for a discussion about condition~\eqref{eq:conditions_on_b}.
In our applications, we will consider $\beta>1$ and $\gamma>p$ as fixed, and $q=1+\kappa$ for $\kappa>0$ arbitrarily small.
In this case condition~\eqref{eq:conditions_on_b} is always attained by taking $\kappa$ sufficiently small, which explains why it does not appear in Theorem~\ref{thm:rig}.
\end{rmk}
Recall that under these conditions, if $W \in C^{(q,p)\var}([0,1],\R^{1+d})$, then the canonical RDE (in the Young regime)
\[
      \mrd X = b(X) \mrd W
     \;,\qquad
     X(0) = \xi \in \R^m
    \]
admits a unique solution $X\in C^{p\var}([0,1],\R^m)$.

For general $W\in D^{(q,p)\var}([0,1],\R^{1+d})$, consider the RDE
\begin{equation}
  \label{eq:RDE?}
  \mrd X = b(X) * \mrd W
  \;,\qquad
  X(0) = \xi
  \;.
\end{equation}
Here, $*$ stands for one of the different ways to interpret a differential equation in the presence of discontinuities, which in general result in different solutions $X$.
Two common choices
(considered in the case $q=p$ by Williams~\cite{W01} and studied further in~\cite{FS17, Chevyrev18, CF19, FZ18}) are
\begin{itemize}
  \item \emph{Geometric (Marcus) RDE.} The solution is completely analogous to that of~\eqref{eq:Marcus_W}: we solve the continuous RDE $\mrd \tX = b(\tX)\mrd W^{\phi,1}$, where $\phi=\ell_{1+d}$ is the linear path function on $\R^{1+d}$, and then remove the fictitious time intervals (note that the RDE is well-posed since $\|W^{\phi,1}\|_{(q,p)\var} \lesssim \|W\|_{(q,p)\var}$ by~\cite[Corollary~A.6]{Chevyrev18}).
    For geometric RDEs we use the notation
    \begin{equation}\label{eq:geometric_RDE}
      \mrd X = b(X) \diamond\mrd W
     \;,\qquad
     X(0) = \xi
      \;.
    \end{equation}
    Observe that $((W,X),\phi_b)\in \sD^{(q,p)\var}([0,1],\R^{1+d+m})$, where $\phi_b$ is the path function on $\R^{1+d+m}$ as in Definition~\ref{def:phi_b_def} with $\ell_{d}$ replaced by $\ell_{1+d}$.
  \item \emph{Forward (It{\^o}) RDE.} The solution satisfies the integral equation
\begin{equation}\label{eq:forward_integral}
X(t) = X(0) + \int_0^t b(X(s-)) \mrd W(s)\;,
\end{equation}
where the integral is understood as a limit of Riemann-Stieltjes sums with $b(X(s-))$ evaluated at the left limit points of the partition intervals:
\[
  \int_0^t b(X(s-)) \mrd W(s)
  = \lim_{|\cP| \to 0} \sum_{[s,s'] \in \cP} b(X(s-)) (W(s') - W(s))
  \; .
\]
Here, $\cP$ are partitions of $[0,t]$ into intervals, and $|\cP|$ is
the size of the longest interval.
%Loosely speaking, one can obtain the forward RDE $X$ by modifying the geometric RDE
%    at each time $t$ where $W$ is not continuous so that $X$ jumps
%    in the spirit of It\^o integration:
%    \[
%      X(t) - X(t-) = b(X(t-)) (W(t) - W(t-))
%      \;.
%    \]
    For forward RDEs we use the notation
    \[
      \mrd X = b(X)^- \mrd  W
     \;,\qquad
     X(0) = \xi
      \;.
    \]
\end{itemize}

\begin{rmk}\label{rmk:generalised_RDEs}
Geometric RDEs use linear paths to connect the endpoints of each jump.
As mentioned in the introduction, this has been generalised in~\cite{CF19}
allowing one to solve
    \begin{equation}
      \label{eq:GGRDE}
      \mrd X = b(X) \diamond \mrd (W, \phi)
     \;,\qquad
     X(0) = \xi
      \;,
    \end{equation}
    for any $(W,\phi) \in \sD^{(q,p)\var}([0,1],\R^{1+d})$.
    The interpretation is as for geometric RDEs: we construct a continuous path,
    %(i.e.\ $W^{\phi, 1}$), 
solve the canonical RDE $\mrd \tX = b(\tX) \mrd W^{\phi,1}$,
    and then remove fictitious time intervals.
Then $((W,X), \phi_b) \in\sD^{(q,p)\var}([0,1], \R^{1+d+m})$, where $\phi_b$ is the path function on $\R^{1+d+m}$ as in Definition~\ref{def:phi_b_def} with $\ell_{d}$ replaced by $\phi$, and the solution map of~\eqref{eq:GGRDE}
\begin{align*}
  \R^m \times \bigl(\sD^{(q,p)\var}([0,1], \R^{1+d}), \balpha_{(q,p)\var}\bigr)
  & \to \bigl(\sD^{(q,p)\var}([0,1], \R^{1+d+m}), \balpha_{(q,p)\var}\bigr)
  \;, \\
  (\xi, (W, \phi))
  & \mapsto ((W, X), \phi_b)
\end{align*}
is locally Lipschitz continuous. (These results were shown in~\cite[Theorem~3.13]{CF19} for $q=p$, but the same proof applies mutatis mutandis for the general case upon using the RDE with drift estimates~\cite[Theorem~12.10]{FV10}.
In fact one can allow rough path drivers in $\R^{d'+d}$ with finite $(q,p)$-variation for arbitrary $p,q\geq 1$ satisfying $p^{-1}+q^{-1}>1$.
We consider only $d'=1$ and $1\leq q\leq p<2$ since this suffices for our purposes.)
\end{rmk}

\subsection{Convergence of forward RDEs to geometric RDEs}

For the remainder of this section, let us fix $1\leq q\leq p<2$, $\beta>q$, $\gamma>p$, such that~\eqref{eq:conditions_on_b} holds.
Suppose that $W \in D^{(q,p)\var}([0,1],\R^{1+d})$ and
$b \in C^{\beta,\gamma}$.
Then for every $\xi \in \R^m$, the geometric RDE
\[
    \mrd \tX = b(\tX) \diamond \mrd W
    \; , \qquad \tX(0) = \xi
\]
admits a unique solution $\tX\in D^{p\var}([0,1],\R^m)$.

Suppose now that $W$ has finitely many jumps
at times $0 < t_1 < \cdots < t_n \leq 1$.
Then the solution $X$ of the forward RDE
\[
    \mrd X = b(X)^- \mrd W
    \; , \qquad X(0) = \xi
\]
can be obtained by solving the canonical RDE on each of the intervals
$[0,t_1), [t_1,t_2),\ldots [t_n, 1)$ (on which $W$ is continuous), and requiring that at the jump times
\begin{equation}\label{eq:jumps_of_X}
    X(t_k) = X(t_k-) + b(X(t_k-)) (W(t_k) - W(t_k-))
    \; .
\end{equation}
Hence in the case that $W$ has finitely many jumps, it is straightforward to construct the solution $X$ first on $[0, t_1)$, then
at $t_1$, then on $[t_1, t_2)$ and so on.
As we shall see, this construction furthermore allows for an easy extension of stability results of continuous RDEs to the setting with jumps.

\begin{rmk}
    \label{rmk:infinite_jumps}
    The construction of the forward solution for processes with infinitely many
    discontinuities is more involved, and can be achieved by solving directly the integral equation~\eqref{eq:forward_integral}.
This is done in~\cite{FZ18} but is not required here.
    % See~\cite{FZ18} where this is done in a general (rough path) setting (our setting, however, does not strictly fit into the framework of~\cite{FZ18}, and since we anyway solve forward RDEs for processes with only finitely many discontinuous, we shall interpret forward RDEs in the above simpler sense).
\end{rmk}

%Theorem~\ref{thm:rig} follows from the following more abstract convergence result.
Recall that $\phi_b$ is the path function on $\R^{1+d+m}$ as in Definition~\ref{def:phi_b_def} with $\ell_{d}$ replaced by $\ell_{1+d}$.

\begin{thm}
    \label{thm:CFW}
    Suppose that $\{W_n\}_{n \geq 1}$ is a sequence of
    $D^{(q,p)\var}([0,1], \R^{1+d})$-valued random elements with almost surely
    finitely many jumps.
    Suppose that $b\in C^{\beta,\gamma}$.
    Let $X_n$ be the solution of the forward RDE
    \[
        \mrd X_n = b(X_n)^- \mrd W_n
        \;, \qquad
        X_n(0) = \xi_n \in \R^m
        \;. 
    \]
    Suppose that
    \begin{enumerate}[label=(\alph*)]
    \item \label{Wn:init_cond} $\lim_{n\to\infty} \xi_n = \xi$ for some $\xi\in\R^m$,
    \item \label{Wn:conv}
            $W_n\to_w W$  in $D([0,1],\R^{1+d})$ with the
$\cSM_1$  topology % converges in law to a random c\`adl\`ag process
             % $W$ in the $\cSM_1$ Skorokhod topology
            as $n \to \infty$ (we allow the limit process $W$ to have infinitely many jumps),
        \item \label{Wn:tight} the family of random variables $\|W_n\|_{(q,p)\var}$ is tight,
        \item \label{Wn:jumps}
            $\sum_t |W_n(t) - W_n(t-)\bigr|^2 \to_w 0$ as $n \to \infty$,
            where the sum is over all jump times of $W_n$.
    \end{enumerate}
    Then $\|W\|_{(q,p)\var} < \infty$ almost surely.
    Let $X$ be the solution of the geometric RDE
    \[
        \mrd X = b(X) \diamond \mrd W
        \;, \qquad
        X(0) = \xi
        \;.
    \]
    (The RDE is well-posed because $\|W\|_{(q,p)\var} < \infty$.)
    Then for each $q'>q$ and $p'>p$, 
    \[
        ((W_n, X_n), \ell_{1+d+m}) \to_w ((W, X), \phi_b)
        \qquad \text{in} \qquad
        \bigl(\sD([0,1],\R^{1+d+m}), \balpha_{(q',p')\var}\bigr)
    \]
as $n\to\infty$.
\end{thm}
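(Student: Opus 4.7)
The plan is to introduce an auxiliary geometric RDE solution $\widetilde X_n$ driven by $W_n$, to use the continuity of the geometric solution map to handle $\widetilde X_n \to X$, and then to absorb the forward–geometric discrepancy via assumption (d). As a preliminary step, I would use Skorokhod's representation theorem together with (b), (c), (d) to pass to a probability space on which $W_n \to W$ almost surely in $\cSM_1$, with $\sup_n \|W_n\|_{(q,p)\var} < \infty$ a.s.\ and $\sum_t |\Delta W_n(t)|^2 \to 0$ a.s. The almost-sure finiteness of $\|W\|_{(q,p)\var}$ follows from Fatou/lower semi-continuity of the variation norms under $\cSM_1$-limits. A standard interpolation between uniform (or $\cSM_1$-) convergence and a uniform $(q,p)$-variation bound (as in \cite[Proposition~8.17]{FV10} or \cite[Lemma~2.6]{CF19}) then upgrades the $\cSM_1$-convergence of $W_n$ to convergence in $\balpha_{(q',p')\var}$ after pairing with the linear path function $\ell_{1+d}$.

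Next, define the geometric auxiliary $\widetilde X_n$ as the solution of $\mrd \widetilde X_n = b(\widetilde X_n)\diamond \mrd W_n$ with $\widetilde X_n(0)=\xi_n$. By Remark~\ref{rmk:generalised_RDEs}, the generalised geometric solution map
\[
\bigl(\sD^{(q,p)\var}([0,1],\R^{1+d}), \balpha_{(q',p')\var}\bigr) \longrightarrow \bigl(\sD^{(q,p)\var}([0,1],\R^{1+d+m}), \balpha_{(q',p')\var}\bigr)
\]
sending $(W,\ell_{1+d})$ to $((W,\widetilde X),\phi_b)$ is locally Lipschitz. Combined with the preliminary step and $\xi_n \to \xi$, this immediately yields $((W_n,\widetilde X_n),\phi_b) \to ((W,X),\phi_b)$ in $\balpha_{(q',p')\var}$ almost surely. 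It remains to show that $((W_n,X_n),\ell_{1+d+m})$ is asymptotically $\balpha_{(q',p')\var}$-close to $((W_n,\widetilde X_n),\phi_b)$.

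For the forward–geometric comparison, note that $X_n$ and $\widetilde X_n$ solve the same continuous RDE between jumps and differ only in the jump rule at times $t_k$: a Taylor expansion of the flow of $b$ along $\ell_{1+d}(W_n(t_k-),W_n(t_k))$ gives
\[
\bigl| X_n(t_k)-X_n(t_k-) - [\widetilde X_n(t_k)-\widetilde X_n(t_k-)]\bigr| \lesssim \|b\|_{C^2}\,|\Delta W_n(t_k)|^2\;,
\]
(when started from the same left-limit). Iterating this estimate using Lipschitz continuity of the continuous RDE solution map on the interjump intervals, together with the uniform $(q,p)$-variation bound from the first step, yields
\[
\|X_n - \widetilde X_n\|_\infty \lesssim \sum_k |\Delta W_n(t_k)|^2 \to 0 \quad\text{a.s.}
\]
Since $\|X_n\|_{p\var}$ and $\|\widetilde X_n\|_{p\var}$ are a.s.\ bounded, the standard interpolation $\|u\|_{p'\var}\lesssim \|u\|_\infty^{1-p/p'}\|u\|_{p\var}^{p/p'}$ then gives $\|X_n - \widetilde X_n\|_{p'\var}\to 0$ a.s. On each fictitious jump interval, the continuous versions used in computing $\balpha$ differ only in the $X$-component: the $\ell_{1+d+m}$-version traces the line segment from $X_n(t_k-)$ to $X_n(t_k)$, whereas the $\phi_b$-version traces the integral curve $\pi_b[X_n(t_k-);\ell_{1+d}(W_n(t_k-),W_n(t_k))]$; both have $p'$-variation $O(|\Delta W_n(t_k)|)$ and are uniformly $O(|\Delta W_n(t_k)|^2)$ apart by a first-order expansion. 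Summing these contributions and again applying interpolation, together with a common reparametrisation of fictitious time, closes the estimate.

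The main obstacle will be the last step: quantifying, in the $\balpha_{(q',p')\var}$-metric, the discrepancy between the linear and $b$-integral-curve interpolations on each fictitious jump interval, and summing these jump-by-jump contributions using only the quadratic smallness $\sum_t |\Delta W_n(t)|^2 \to 0$ from (d). The interpolation must be performed carefully because on each fictitious interval the two paths have $p'$-variation comparable to the jump size, and one must exploit the $O(|\Delta W|^2)$ uniform closeness in just the right way; aligning the fictitious time-intervals via a suitable $\lambda\in\Lambda$ (so that the same $\delta r_k$-interval is used for both paths) is what allows this interpolation to go through in the mixed $(q',p')$-variation setting.
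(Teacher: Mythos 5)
Your proposal follows essentially the same strategy as the paper's proof: Skorokhod representation to upgrade to almost-sure convergence, lower semi-continuity for $\|W\|_{(q,p)\var}<\infty$, interpolation to pass from $\cSM_1$ to $\balpha_{(q',p')\var}$, an auxiliary geometric solution $\widetilde X_n$ controlled by the continuity of the generalised geometric RDE solution map, and a Taylor-expansion-at-jumps plus telescoping argument to compare $X_n$ with $\widetilde X_n$ using (d). The latter is exactly Lemmas~\ref{lem:Wi} and~\ref{lem:aoit} in the paper (which, incidentally, only need $\|b\|_{\Lip}\|b\|_\infty$, not $\|b\|_{C^2}$, and give the bound directly in $p$-variation rather than $L^\infty$).

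The place where the paper is noticeably cleaner, and where you acknowledge a gap, is the final comparison of $((W_n,X_n),\ell_{1+d+m})$ with $((W_n,\widetilde X_n),\phi_b)$. Rather than trying to quantify this directly on each fictitious interval, the paper splits it into two steps: first it changes only the path function while keeping $\widetilde X_n$, observing $\balpha_\infty\bigl(((W_n,\widetilde X_n),\phi_b),((W_n,\widetilde X_n),\ell_{1+d+m})\bigr)\to 0$ (because the jumps of $W_n$ vanish in size) and upgrading via the interpolation Lemma~3.11 of~\cite{CF19}; then it changes only the discrete process from $\widetilde X_n$ to $X_n$ while keeping the linear path function, using $\|(W_n,\widetilde X_n)-(W_n,X_n)\|_{p'\var}\to 0$ from Lemma~\ref{lem:aoit} together with the uniform continuity of $(W,X)\mapsto((W,X),\ell_{1+d+m})$ from $(D^{(q',p')\var},\bsigma_\infty)$ to $(D^{(q',p')\var},\balpha_{(q'',p'')\var})$ on sets bounded in $(q',p')$-variation (the analogue of Proposition~3.12 in~\cite{CF19}). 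This two-step decomposition sidesteps the delicate alignment-of-fictitious-intervals argument you flag, and is essentially ``the right way'' to run the interpolation you outline. One slip in your description worth fixing: the $\phi_b$-interpolation is attached to $\widetilde X_n$, so on a fictitious interval it traces $\pi_b[\widetilde X_n(t_k-);\ell_{1+d}(W_n(t_k-),W_n(t_k))]$, not $\pi_b[X_n(t_k-);\cdot]$ as you wrote. Indeed $((W_n,X_n),\phi_b)$ is not an admissible pair in $\bsD$, since the forward jump $X_n(t_k)-X_n(t_k-)=b(X_n(t_k-))\Delta W_n(t_k)$ does not coincide with the endpoint $\pi_b[X_n(t_k-);\ell_{1+d}](1)$ of the $\phi_b$-curve; this is precisely why the two comparisons cannot literally be merged and why the paper keeps them separate.
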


We give the proof after several preliminary results.
We will see that if $X_n$ solved the geometric RDE $\mrd X_n = b(X_n) \diamond \mrd W_n$ instead of the forward RDE, then Theorem~\ref{thm:CFW} would readily follow from~\cite{CF19} (and assumption~\ref{Wn:jumps} would not be needed).
In Lemma~\ref{lem:aoit}, we verify that under assumption~\ref{Wn:jumps}
the solution of the forward RDE $\mrd X_n = b(X_n)^- \mrd W_n$ closely approximates
the solution of the geometric RDE $\mrd X_n = b(X_n) \diamond \mrd W_n$
(generalising a result of~\cite{W01}).
First we show how a single jump of a geometric solution
relates to a ``forward'' jump (cf.~\cite[Lemma~1.1, Eq.~(11)]{W01}).
Define the semi-norm
\[
\|b\|_{\Lip} = \sup_{x,y\in\R^m} \frac{|b(x)-b(y)|}{|x-y|}\;.
\]

\begin{lemma}
    \label{lem:Wi}
    Suppose that $X \in C([0,1], \R^m)$ solves the ODE
    $\mrd X = b(X) \mrd t$ with $b$ Lipschitz. Then
    $\bigl|X(1) - X(0) - b(X(0))\bigr| \leq \|b\|_{\Lip}\|b\|_\infty / 2$.
\end{lemma}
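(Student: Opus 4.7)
The plan is to use the integral form of the ODE and bound the difference by exploiting the Lipschitz property of $b$ together with the a priori bound $\|\dot X\|_\infty \le \|b\|_\infty$.

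First I would write, by integrating the ODE,
\[
X(1) - X(0) = \int_0^1 b(X(s))\, \mathrm{d}s,
\]
so that after subtracting $b(X(0)) = \int_0^1 b(X(0))\, \mathrm{d}s$ one obtains
\[
X(1) - X(0) - b(X(0)) = \int_0^1 \bigl(b(X(s)) - b(X(0))\bigr)\, \mathrm{d}s.
\]
Next I would apply the Lipschitz estimate $|b(X(s)) - b(X(0))| \le \|b\|_{\Lip}\, |X(s) - X(0)|$, and then use the ODE once more to bound
\[
|X(s) - X(0)| \le \int_0^s |b(X(u))|\, \mathrm{d}u \le s\, \|b\|_\infty.
\]
Combining these two bounds and integrating yields
\[
|X(1) - X(0) - b(X(0))| \le \|b\|_{\Lip} \|b\|_\infty \int_0^1 s\, \mathrm{d}s = \tfrac12 \|b\|_{\Lip}\|b\|_\infty,
\]
which is the claimed inequality.

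There is no real obstacle here: the proof is a two-line Taylor-type expansion of the ODE solution at $t=0$, with the remainder controlled by Lipschitz continuity. The only mild subtlety is to remember that the a priori bound $|X(s)-X(0)| \le s\|b\|_\infty$ (rather than just $\|b\|_\infty$) is needed to produce the factor $1/2$, which will be precisely the factor matching the $1/2$ appearing in the It{\^o}-Stratonovich/Marcus correction term used in Lemma~\ref{lem:aoit} below.
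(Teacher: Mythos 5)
Your proof is correct and follows exactly the same argument as the paper: rewrite the difference as $\int_0^1(b(X(s))-b(X(0)))\,\mathrm{d}s$, apply the Lipschitz bound, use $|X(s)-X(0)|\le s\,\|b\|_\infty$, and integrate $s$ over $[0,1]$ to produce the factor $1/2$.
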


\begin{proof}
    Write
        $X(1)
        = X(0) + b(X(0))
          + \int_0^1 \bigl(b(X(t)) - b(X(0))\bigr) \mrd t$.
    Since $|X(t) - X(0)| \leq \|b\|_\infty t$, 
    \[
        \Bigl| \int_0^1 \bigl(b(X(t)) - b(X(0))\bigr) \mrd t \Bigr|
        \leq \|b\|_{\Lip} \int_0^1 |X(t) - X(0)| \mrd t
        \leq \|b\|_{\Lip}\|b\|_{\infty} \int_0^1 \, t \mrd t
        \; .\qedhere
    \]
\end{proof}

%\begin{cor}
%    \label{cor:Wi}
%    Suppose that $X \in D([0,1], \R^m)$ solves a geometric RDE
%    $dX = b(X) \diamond \mrd W$, where $b \colon \R^d \to \R^{m \times d}$
%    is $\Lip(\gamma)$ and $W \in D([0,1], \R^d)$ has bounded $p$-variation
%    for $p \in [1,2)$. Suppose that $s \in (0,1]$ is a jump time of $W$.
%    Then
%    \[
%        \bigl| X(s) - X(s-) - b(X(s-)) \bigr|
%        \leq \frac{1}{2} \|b\|_{\Lip(1)}^2 |W(s) - W(s-)|^2
%        \; .
%    \]
%\end{cor}
%
%\begin{proof}
%    Let $\tX \in C([0,1], \R^m)$ be the solution of the ODE
%    \[
%        d \tX(t) = b(\tX(t)) (W(s) - W(s-)) \mrd t
%        , \qquad \tX(0) = X(s-)
%        \; .
%    \]
%    Then, by construction of the geometric solution, $\tX(1) = X(s)$.
%    The result follows from Proposition~\ref{prop:Wi}.
%\end{proof}

We now quantify the error in moving from forward to geometric solutions.

\begin{lemma}
  \label{lem:aoit}
  Suppose that $W \in D^{(q,p)}([0,1], \R^{1+d})$ has finitely many jumps.
  Let $b \in C^{\beta,\gamma}$
  and let $X, \tX \in D([0,1], \R^m)$ be given by
\[
    \mrd X   = b(X)^- \mrd W\;,    \qquad      
    \mrd \tX = b(\tX) \diamond \mrd W\;,    \qquad      X(0)=\tX(0) = \xi\;.
\]
  Then
  \[
    \|X - \tX\|_{p\var}
    \leq \|b\|_{\Lip}\|b\|_\infty K
    \sum_{t} |W(t) - W(t-)|^2
    \;,
  \]
  where $K$ depends only on $\|b\|_{C^{\beta,\gamma}}$, $\|W\|_{(q,p)\var}$, $\gamma$, $\beta$, $p$, and $q$, and
  the sum is over all jump times $t$ of $W$.
\end{lemma}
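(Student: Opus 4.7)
The plan is to exploit the fact that between jumps, both $X$ and $\tX$ solve the \emph{same} continuous Young RDE $\mrd Y = b(Y) \mrd W$, so they can differ only because of the different behaviour at jumps. The whole argument reduces to (i) quantifying the discrepancy of the two solutions at each individual jump, (ii) propagating that discrepancy by continuous Young stability between consecutive jumps, and (iii) summing up the contributions in $p$-variation.

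For step (i), fix a jump time $t_k$ and set $\Delta_k W := W(t_k) - W(t_k-)$. The forward solution satisfies $X(t_k) - X(t_k-) = b(X(t_k-)) \Delta_k W$, while the geometric solution is given by $\tX(t_k) = Y(1)$, where $Y$ solves the autonomous ODE $\dot Y = b(Y) \Delta_k W$ on $[0,1]$ with $Y(0) = \tX(t_k-)$. Applying Lemma~\ref{lem:Wi} to the rescaled vector field $x\mapsto b(x)\Delta_k W$, whose Lipschitz and sup norms are $\|b\|_{\Lip}|\Delta_k W|$ and $\|b\|_\infty |\Delta_k W|$ respectively, yields
\[
    \bigl| \tX(t_k) - \tX(t_k-) - b(\tX(t_k-)) \Delta_k W \bigr|
    \le \tfrac12 \|b\|_{\Lip}\|b\|_\infty |\Delta_k W|^2\;.
\]
Combining this with the explicit forward jump gives the bound
\[
    \bigl|(X - \tX)(t_k) - (X - \tX)(t_k-)\bigr|
    \le \|b\|_{\Lip}\,|\Delta_k W|\,|X(t_k-) - \tX(t_k-)|
    + \tfrac12\|b\|_{\Lip}\|b\|_\infty |\Delta_k W|^2\;.
\]

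For step (ii), both $X$ and $\tX$ are solutions of the canonical Young RDE $\mrd Y = b(Y) \mrd W$ on each interval $[t_k, t_{k+1})$, driven by the restriction of $W$ which has finite $(q,p)$-variation. Under the regularity $b \in C^{\beta,\gamma}$ together with~\eqref{eq:conditions_on_b}, the RDE-with-drift stability result~\cite[Theorem~12.10]{FV10} applies; after partitioning $[0,1]$ into finitely many sub-intervals on which $\|W\|_{(q,p)\var}$ is small enough for the stability constant to be bounded by a universal multiple (the number of such sub-intervals being itself controlled by $\|W\|_{(q,p)\var}$), one obtains a Lipschitz estimate of the form
\[
    \|X - \tX\|_{p\var; [t_k, t_{k+1})} + \sup_{t \in [t_k, t_{k+1})}|X(t) - \tX(t)|
    \le K_1 |X(t_k) - \tX(t_k)|
\]
with $K_1$ depending only on $\|b\|_{C^{\beta,\gamma}}$, $\|W\|_{(q,p)\var}$, $\beta, \gamma, p, q$.

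For step (iii), combining (i) and (ii) gives the recursion $\epsilon_k \le K_1(1 + \|b\|_{\Lip}|\Delta_k W|)\,\epsilon_{k-1} + \tfrac12\|b\|_{\Lip}\|b\|_\infty|\Delta_k W|^2$ for $\epsilon_k := |X(t_k) - \tX(t_k)|$, with $\epsilon_0 = 0$. Iterating and using that $\sum_k|\Delta_k W|^2 \le \|W\|_{p\var}^{2-p}\sum_k|\Delta_k W|^p \le \|W\|_{p\var}^2$ (with $p<2$), together with the fact that $\prod_k(1 + \|b\|_{\Lip}|\Delta_k W|)$ is controlled by $\exp(K'\|W\|_{p\var})$ via the sub-intervals argument above, produces a uniform pointwise bound $\sup_k \epsilon_k \le K_2\,\|b\|_{\Lip}\|b\|_\infty \sum_k |\Delta_k W|^2$. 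The $p$-variation bound then follows from sub-additivity,
\[
    \|X - \tX\|_{p\var; [0,1]} \le \sum_{k=0}^{n}\|X - \tX\|_{p\var;[t_k,t_{k+1})} + \sum_k\bigl|(X-\tX)(t_k) - (X-\tX)(t_k-)\bigr|\;,
\]
estimating each term on the right by the stability estimate of step (ii) and the jump bound of step (i).

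The most delicate point will be step (ii), which requires that the Young-stability constant $K_1$—and in particular the factor controlling the iterative product in step (iii)—can be made to depend only on $\|W\|_{(q,p)\var}$ and not on the number of jumps. This is achieved by the standard Young-regime device of partitioning $[0,1]$ according to a control function of $W$, so that the number of "bad" sub-intervals (and hence the exponent in the accumulated product) is controlled in a way that is uniform in the number of jumps; carrying this through cleanly under the mixed variation norm $(q,p)$ is the chief technical point of the argument.
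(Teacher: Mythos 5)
Your step (i) is exactly the paper's use of Lemma~\ref{lem:Wi}, and the ``sub-interval'' device you invoke in step (ii) is the standard one for continuous Young stability, so the per-interval constant $K_1$ is fine. The gap is in step (iii).

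When you iterate $\epsilon_k \le K_1(1 + \|b\|_{\Lip}|\Delta_k W|)\epsilon_{k-1} + \tfrac12\|b\|_{\Lip}\|b\|_\infty |\Delta_k W|^2$, the final bound acquires a factor $\prod_k (1 + \|b\|_{\Lip}|\Delta_k W|)$. You assert this is controlled by $\exp(K'\|W\|_{p\var})$ ``via the sub-intervals argument above''. That claim is false: $\prod_k(1+c|\Delta_k W|) \ge 1 + c\sum_k|\Delta_k W|$, and $\sum_k|\Delta_k W|$ is a $1$-variation quantity of the jump part, which is \emph{not} bounded in terms of $\|W\|_{(q,p)\var}$ for $p>1$ (with $N$ jumps of equal size and $\sum_k|\Delta_k W|^p$ fixed, $\sum_k|\Delta_k W| \sim N^{1-1/p}\to\infty$). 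The adaptive-partition trick controls the $p$-variation accumulated on each sub-interval; it does nothing to cap the number of forward jump maps $x\mapsto x + b(x)\Delta_k W$ you apply within a sub-interval, and it is precisely the composition of \emph{those} forward maps whose Lipschitz constant escapes control. So the constant your argument produces depends on the number of jumps of $W$, which the statement of the lemma does not permit.

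The paper's proof avoids exactly this by interpolating: it introduces $X_j$ as the forward solution on $[0,t_j]$ followed by the \emph{geometric} solution on $[t_j,1]$. Consecutive processes $X_{j-1}$ and $X_j$ agree up to $t_j$, their values at $t_j$ differ by $O(|\Delta_j W|^2)$ by Lemma~\ref{lem:Wi}, and on $[t_j,1]$ both solve the \emph{same geometric} RDE from these nearby initial data. After bridging jumps into a continuous driver $W^{\phi,1}$ with $\|W^{\phi,1}\|_{(q,p)\var} \lesssim \|W\|_{(q,p)\var}$, the Lipschitz-in-initial-condition constant for the geometric flow is controlled by $\|W\|_{(q,p)\var}$ alone (no accumulation of forward jump factors enters). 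Summing the telescope $X = X_n$, $\tX = X_0$ then gives the bound without any iterative product. To repair your argument you would essentially have to replace the direct $X$-versus-$\tX$ recursion by this interpolation, which is the idea you are missing.
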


\begin{proof}
  Let $t_1 < \cdots < t_n$ be the jump times of $W$; let $t_0 = 0$.
  For $j \leq n$, define $X_j$ as the solution of
  forward RDE $\mrd X_j = b(X_j)^- \mrd W$, $X_j(0) = \xi$,
  on $[0, t_j]$, and as the solution of the geometric RDE
  $\mrd X_j = b(X_j) \diamond \mrd W$ on $[t_j, 1]$
  with the initial condition taken from the solution on $[0, t_j]$.
  
  For each $j$, the processes $X_{j-1}$ and $X_j$ coincide on
  $[0, t_j)$ but possibly differ at $t_j$.
  By Lemma~\ref{lem:Wi} and the identity~\eqref{eq:jumps_of_X},
  \begin{equation}
    \label{eq:hha8}
    |X_j(t_j) - X_{j-1}(t_j)|
    \leq \frac{1}{2} \|b\|_{\Lip}\|b\|_\infty |W(t_j) - W(t_j-)|^2
    \;.
  \end{equation}
  On $[t_j, 1]$, both $X_{n, j-1}$ and $X_{n, j}$ solve the geometric RDE
  $\mrd X = b(X) \diamond \mrd W$, although with possibly different
  initial conditions. Recall that solutions of geometric RDEs are obtained
  from RDEs driven by continuous paths by inserting fictitious time intervals
  and linearly bridging the jumps.
  As such, they enjoy Lipschitz dependence on the initial condition (see~\cite[Theorem~12.10]{FV10})
  \begin{equation}
      \label{eq:hha9}
      \begin{aligned}
          \vvvert X_j - X_{j-1} \vvvert_{p\var;[t_j,1]}
          & = | X_j(t_j) - X_{j-1}(t_j) |
            + \| X_j - X_{j-1} \|_{p\var;[t_j,1]}  
       \\ & \leq K
          | X_j(t_j) - X_{j-1}(t_j) |
          \; ,
      \end{aligned}
  \end{equation}
  where $K$ depends only on $\|b\|_{C^{\beta,\gamma}}$, $\|W\|_{(q,p)\var}$, $\gamma$, $\beta$, $p$, and $q$.
  
  It follows from~\eqref{eq:hha8} and~\eqref{eq:hha9} that
  \[
    \vvvert X_j - X_{j-1} \vvvert_{p\var}
    \leq \frac{1}{2} \|b\|_{\Lip}\|b\|_\infty K
    |W(t_j) - W(t_j-)|^2
    \; .
  \]
  Observing that $X_0 = \tX$ and $X_n = X$, and taking the sum over $j$,
  we obtain the result.
\end{proof}
    
\begin{proof}[Proof of Theorem~\ref{thm:CFW}]
Denote by $\balpha_{(q,p)\var}$ the metric on $D^{(q,p)\var}([0,1],\R^k)$ induced by the
    corresponding metric on $\sD^{(q,p)\var}([0,1],\R^k)$ upon pairing paths with the linear path function $\ell_{k}$, i.e.\
    $\balpha_{(q,p)\var}(X_1,X_2) = \balpha_{(q,p)\var}((X_1, \ell_{k}), (X_2, \ell_{k}))$.
Let $D^{0,(q,p)\var} \subset D^{(q,p)\var}$ denote the closure of smooth paths in $(D^{(q,p)\var}, \balpha_{(q,p)\var})$.
By the same argument as~\cite[Proposition~3.10~(v)]{CF19}, note that $D^{(q,p)\var} \subset D^{0,(q',p')\var}$ for all $q' > q$ and $p' > p$.

    Fix $1\leq q' \leq p'<2$ with $p' \in (p,\gamma)$, $q'\in(q,\beta)$, and such that~\eqref{eq:conditions_on_b} holds with $q,p$ replaced by $q',p'$.
By~\cite[Proposition~2.9]{CF19}, convergence in $\cSM_1$ is equivalent to convergence in
    $(D, \balpha_\infty)$.
    By the Skorokhod representation theorem, we can thus suppose that a.s.\ $\lim_{n\to\infty}\balpha_\infty(W_n, W)=0$.
    Tightness of $\{\|W_n\|_{(q,p)\var}\}$ implies that a.s.\ there is a subsequence $n_k$ such that $\limsup_{k \to \infty} \|W_{n_k}\|_{(q,p)\var} < \infty$, and thus $\|W\|_{(q,p)\var}<\infty$ a.s.\ by lower semi-continuity of $(q,p)$-variation.
In addition, by a standard interpolation argument (cf.~\cite[Lemma~3.11]{CF19}), it holds that $\balpha_{(q',p')\var}(W_n,W) \to 0$ in probability, and therefore $W_n \to_w W$ in $(D^{0,(q',p')\var}, \balpha_{(q',p')\var})$.

Since $(D^{0,(q',p')\var}, \balpha_{(q',p')\var})$ is separable, we can again apply the Skorokhod representation theorem and suppose henceforth that, a.s., $W_n \to W$ in $\balpha_{(q',p')\var}$ and $\sum |W_n(t) - W_n(t-)|^2 \to 0$ (we used here that $\sum |W_n(t) - W_n(t-)|^2$ converges in law to a constant).

    An application of the continuity of solution map for generalised geometric
    RDEs (the proof of~\cite[Theorem~3.13]{CF19} combined with~\cite[Theorem~12.10]{FV10}; see Remark~\ref{rmk:generalised_RDEs}) shows that
   \begin{equation}\label{eq:W_n_to_W}
   ((W_n, \tX_n), \phi_b) \to ((W, X), \phi_b) \; \text{ in } \; (\sD^{(q',p')\var}([0,1],\R^{1+d+m}), \balpha_{(q',p')\var})\;,
\end{equation}
    where $\tX_n$ solves the geometric RDE
    \[
        \mrd \tX_n = b(\tX_n) \diamond \mrd W_n
        \;, \qquad
        \tX_n(0) = \xi_n
        \;. 
    \]
    Furthermore, since clearly
    \begin{equation}\label{eq:phi_to_linear}
    \lim_{n\to\infty}\balpha_\infty(((W_n,\tX_n),\phi_b),((W_n,\tX_n),\ell_{1+d+m})) = 0\;,
    \end{equation}
    it follows from~\cite[Lemma~3.11]{CF19} that
    \begin{equation}\label{eq:phi_b_to_phi}
    \lim_{n \to \infty} \balpha_{(q',p')\var}(((W_n,\tX_n),\phi_b),((W_n,\tX_n),\ell_{1+d+m})) = 0\;.
    \end{equation}
    It follows from Lemma~\ref{lem:aoit} that $\lim_{n\to\infty}\|(W_n,\tX_n)-(W_n,X_n)\|_{p'\var} = 0$,
    and in particular that $\bsigma_\infty((W_n,\tX_n),(W_n,X_n)) \to 0$.
    By virtue of interpolation, for each $q''>q'$ and $p''>p'$, the identity map
    \[
        (W,X) \mapsto ((W,X),\ell_{1+d+m})
        \;, \qquad
        (D^{(q',p')\var},\bsigma_\infty) \to (D^{(q',p')\var},\balpha_{(q'',p'')\var})
    \]
    is uniformly continuous on sets bounded in $(q',p')$-variation
    (cf.~\cite[Proposition~3.12]{CF19}),
    from which it follows that
    \begin{equation}\label{eq:tX_toX}
    \lim_{n\to\infty}\balpha_{(q'',p'')\var}(((W_n,\tX_n),\ell_{1+d+m}),((W_n,X_n),\ell_{1+d+m})) = 0\;.
    \end{equation}
Combining~\eqref{eq:W_n_to_W},~\eqref{eq:phi_b_to_phi}, and~\eqref{eq:tX_toX}, we obtain
\begin{equation*}
\lim_{n\to\infty} \balpha_{(q'',p'')\var}(((W_n,X_n),\ell_{1+d+m}, ((W,X),\phi_b)) = 0\;.
\end{equation*} 
    Since $q''>q'>q$ and $p''>p'>p$ are arbitrary, the conclusion follows.	
\end{proof}

We are now ready for the proof of Theorem~\ref{thm:rig}.

\begin{proof}[Proof of Theorem~\ref{thm:rig}]
Defining the process $V_n \colon [0,\infty) \to [0,\infty)$,
$V_n(t)=n^{-1}\floor{tn}$, observe that $X_n$ solves the forward RDE
\[
 \mrd X_n = a(X_n)^- \mrd V_n + b(X_n)^- \mrd W_n\;.
\]
       It follows from our assumptions that
        \begin{equation}\label{eq:rig:conv}
        (V_n,W_n) \to (\id,L) \quad \text{in the $\cSM_1$ topology}
        \end{equation}
        and 
        \begin{equation}\label{eq:rig:tight}
        \{\|(V_n,W_n)\|_{(1,p)\var}\}_{n \geq 1} \quad \text{is tight for every $p > \alpha'$}\;.
        \end{equation}
    Furthermore, since $\alpha < 2$ and $W_n$ makes at most $n$ jumps of size at most
        $n^{-1/\alpha} \|v\|_\infty$,
        \begin{equation}\label{eq:rig:jumps}
            \sum_t |W_n(t) - W_n(t-)\bigr|^2
            \leq \|v\|_\infty^2 n^{1-2/\alpha}
            \to 0
            \quad \text{as } n \to \infty
            \;.
        \end{equation}
        Choose $p \in (\alpha',\gamma)$ and $q \in (1, \min\{p, \beta\})$
        such that~\eqref{eq:conditions_on_b} is satisfied.
        By Theorem~\ref{thm:CFW}, it follows from~\eqref{eq:rig:conv},~\eqref{eq:rig:tight}, and~\eqref{eq:rig:jumps} that $\|L\|_{p\var}<\infty$ a.s.\ and
        \begin{equation}
            \label{eq:bouq}
            ((V_n,W_n, X_n), \ell_{1+d+m}) \to_w ((\id,L, X),\phi_{(a,b)})
        \end{equation}
in $(\sD^{(q,p)\var}([0,1],\R^{1+d+m}), \balpha_{(q,p)\var})$.
        Moreover, $\lim_{n\to0}\|V_n - \id\|_{q\var} =0$ and thus~\eqref{eq:bouq}
        readily implies that $((W_n, X_n), \ell_{d+m}) \to_w ((L, X),\phi_{b})$
        in $(\sD^{p\var}([0,1],\R^{d +m}), \balpha_{p\var})$.
\end{proof}

\section{Results for Gibbs-Markov maps}
\label{sec:GM}

In this section, we prove results on weak convergence to a L\'evy process, and tightness in $p$-variation, for a class of uniformly expanding maps known as
Gibbs-Markov maps~\cite{AaronsonDenker01}.
The weak convergence result extends work of~\cite{AaronsonDenker01,KPZ18,MZ15,TyranKaminska10} from scalar-valued observables to $\R^d$-valued observables.
The result on tightness in $p$-variation is new even for $d=1$.

\subsection{Gibbs-Markov maps}

Let $(Z,d)$ be a bounded metric space with Borel sigma-algebra $\cB$
and finite Borel measure $\nu$,
and an at most countable partition $\cP$ of $Z$ (up to a zero measure set)
        with $\nu(a) > 0$ for each $a \in \cP$.
Let $F\colon Z\to Z$ be a nonsingular ergodic measurable transformation.
We assume that $F$ is a {\em Gibbs-Markov map}. That is,
there are constants $\lambda > 1$, $K > 0$ and $\theta \in (0,1]$
such that for all $z,z'\in a$ and $a \in \cP$:
\begin{itemize}
    \item $Fa$ is a union of partition elements and $F$ restricts to a (measure-theoretic) bijection from $a$ to $Fa$;
        moreover $\inf_{a\in\cP} \nu(Fa)>0$; 
    \item $d(Fz, Fz') \geq \lambda d(z,z')$;
    \item the inverse Jacobian $\zeta_a = \frac{d\nu}{d\nu \circ F}$ of
        the restriction $F \colon a \to Fa$ satisfies
        \begin{equation}
            \label{eq:GM:dist}
            \bigl| \log \zeta_a(z) - \log \zeta_a(z') \bigr|
            \leq K d(Fz, Fz')^\theta
            \;.
        \end{equation}
\end{itemize}
It is standard (see for example \cite[Corollary p.~199]{AaronsonDenker01}) that there is
a unique $F$-invariant probability measure
$\mu_Z$ absolutely continuous with respect to $\nu$, with
bounded density $d\mu_Z / d\nu$.
The measure $\mu_Z$ is ergodic and we suppose for simplicity that $\mu_Z$ is mixing.  (The nonmixing case is also covered by standard arguments, see for example the end of the proof of~\cite[Proposition~4.3]{MZ15}, but is not required here.)

\begin{defn}
    \label{def:reg}
    We say that an $\R^d$-valued random variable $\xi$ is
    \emph{regularly varying} with index $\alpha > 0$ if
    there exists a probability measure $\sigma$
    on $\cB(\bbS^{d-1})$, the Borel sigma-algebra on the unit sphere
    $\bbS^{d-1} = \{x \in \R^d : |x| = 1 \}$, such that
    \[
        \lim_{t \to \infty}
        \frac{\PP(|\xi| > rt, \ \xi / |\xi| \in B)}{\PP(|\xi| > t)}
        = r^{-\alpha} \sigma(B)
    \]
    for all $r > 0$ and $B \in \cB(\bbS^{d-1})$ with $\sigma(\partial B) = 0$.
\end{defn}

    Recall that an $\alpha$-stable random variable $X$ in $\R^d$ with $\alpha\in(1,2)$ and $\E X = 0$
    has characteristic function 
    \[
        \E \exp( i u \cdot X)
        = \exp \biggl\{
            - \int_{\bbS^{d-1}} |u \cdot s|^\alpha
            \Bigl( 1 - i \sgn (u \cdot s) \tan \frac{\pi \alpha }{ 2} \Bigr)
            \mrd \Lambda(s)
        \biggr\}
        \; ,\quad
        u \in \R^d
        \; .
    \]
    Here $\Lambda$ is a finite nonnegative Borel measure on $\bbS^{d-1}$
with $\Lambda(\bbS^{d-1})>0$,
    known as the \emph{spectral measure} \cite[Section~2.3]{ST94}.
    It is a direct verification that $\gamma X$, with $\gamma \geq 0$,
    has spectral measure $\gamma^\alpha \Lambda$.

We say that an $\alpha$-stable L\'evy process $L_\alpha$ has spectral measure $\Lambda$ if
$L_\alpha(1)$ has spectral measure $\Lambda$.

Fix a function $\tau\colon Z\to\{1,2,\ldots\}$ that is constant on each $a\in\cP$ with value $\tau(a)$
such that $\int_Z \tau \mrd \mu_Z < \infty$.
Let $V\colon Z\to\R^d$ be integrable with $\int_Z V \mrd \mu_Z = 0$. Assume that
there exists $C_0>0$ such that for and all $z,z' \in a$, $a \in \cP$,
\begin{equation} \label{eq:C0}
    |V(z)| \le C_0\tau(a) \qquad\text{and}\qquad
    |V(z) - V(z')|
    \leq C_0 \tau(a)d(Fz,Fz')^\theta
    \; .
\end{equation} 

Suppose that $b_n$ is a sequence of positive numbers and
define the c\`adl\`ag process
\[
    W_n(t) = b_n^{-1} \sum_{j=0}^{\floor{nt}-1} V \circ F^j
    \; .
\]
We consider $W_n$ as a random element on the probability space $(Z, \mu_Z)$.
Throughout this section, $\|\cdot\|_p$ denotes the $L^p$ norm on $(Z,\mu_Z)$ for
$1 \le p \le \infty$ and $\E$ denotes expectation with respect to $\mu_Z$.

We now state the main results of this section.

\begin{thm} \label{thm:weakZ}
    Suppose that
    \begin{itemize}
        \item $V$ is regularly varying on $(Z, \mu_Z)$ with index $\alpha \in (1,2)$
            and $\sigma$ as in~Definition~\ref{def:reg},
        \item $b_n$ satisfies $\lim_{n \to \infty} n \mu_Z( |V| > b_n ) = 1$,
        \item $V - \E (V \mid \cP) \in L^p$ for some $p > \alpha$.
    \end{itemize}
    Then $W_n \to_w L_\alpha$ in the $\cSJ_1$ topology as $n \to \infty$,
    where $L_\alpha$ is the $\alpha$-stable L\'evy process
    with spectral measure $\Lambda = \cos \frac{\pi \alpha}{2} \Gamma(1-\alpha) \sigma$.
\end{thm}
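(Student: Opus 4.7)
The plan is to split $V = \bar V + \chi$ with $\bar V = \E(V \mid \cP)$ and $\chi = V - \bar V$, and treat the two parts independently. The summand $\bar V$ is constant on each partition atom and inherits the heavy-tailed behaviour of $V$: since $\chi \in L^p$ with $p > \alpha$, we have $\mu_Z(|\chi|>t) = O(t^{-p}) = o(\mu_Z(|V|>t))$, so $\bar V$ is regularly varying on $(Z,\mu_Z)$ with the same index $\alpha$ and spectral measure $\sigma$ as $V$. Writing $\bar W_n(t) = b_n^{-1}\sum_{j<\floor{nt}} \bar V\circ F^j$, it suffices to prove $\bar W_n \to_w L_\alpha$ in $\cSJ_1$ and $\sup_{t\in[0,1]}|W_n(t)-\bar W_n(t)| \to 0$ in probability.

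For the $\chi$-part I would invoke the martingale-coboundary decomposition available for Gibbs-Markov maps (Aaronson-Denker, after Gordin): using the transfer operator $P$ of $F$ and its spectral gap on the standard space of piecewise $\theta$-H\"older functions, write $\chi = m + \psi - \psi\circ F$ with $\psi = \sum_{k\ge 1} P^k\chi \in L^p$, and $\{m\circ F^j\}_j$ a reverse martingale difference sequence in $L^p$. The vector-valued Burkholder inequality yields $\|\sup_{k\le n}|\sum_{j<k} m\circ F^j|\,\|_p \lesssim n^{1/p}\|m\|_p$, while the coboundary telescopes to $\psi - \psi\circ F^{\floor{nt}}$, whose supremum over $t\in[0,1]$ is controlled via $\mu_Z(\max_{k\le n}|\psi\circ F^k|>\lambda) \le n\mu_Z(|\psi|>\lambda)$ using invariance of $\mu_Z$. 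Since $b_n$ is regularly varying of index $1/\alpha$ and $p>\alpha$, both bounds are $o(b_n)$ uniformly in $t$ with high probability, reducing matters to $\bar W_n$.

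The core of the proof is $\bar W_n \to_w L_\alpha$ in $\cSJ_1$. For finite-dimensional distributions I would extend the Aaronson-Denker Nagaev-Guivarc'h scheme to $\R^d$-valued observables: for each $u\in\R^d$ the perturbed transfer operator $P_u f = P(e^{ib_n^{-1} u\cdot\bar V} f)$ admits a simple leading eigenvalue $\lambda_n(u)$, and the multivariate regular variation of $\bar V$ with spectral measure $\sigma$ yields the key asymptotic
\[
n\bigl(\lambda_n(u)-1\bigr) \to - \int_{\bbS^{d-1}} |u\cdot s|^\alpha \Bigl(1 - i\sgn(u\cdot s)\tan\tfrac{\pi\alpha}{2}\Bigr) \mrd \Lambda(s)\;,
\]
with $\Lambda = \cos(\pi\alpha/2)\Gamma(1-\alpha)\sigma$; iterating along $0<t_1<\cdots<t_k\le 1$ and combining via the Markov property gives joint characteristic function convergence to those of $L_\alpha$. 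For $\cSJ_1$-tightness I would use the atomic structure of $\bar V$ to prove an anti-clustering condition: the atoms on which $|\bar V|>\epsilon b_n$ have total $\mu_Z$-measure $O(n^{-1})$, and the Gibbs-Markov spectral gap implies their successive visits under $F$ are approximately independent, so with probability tending to $1$ no two ``large'' values occur within $\eta n$ iterates of each other. Combined with control of the truncated sum $b_n^{-1}\sum_{j<k}\bar V\mathbf{1}_{\{|\bar V|\le\epsilon b_n\}}\circ F^j$ (which is $o_p(1)$ uniformly via a second moment estimate), this produces $\cSJ_1$-tightness via Skorokhod's oscillation criterion.

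The main obstacle I foresee lies in the transition from the one-dimensional case (treated in Aaronson-Denker, Tyran-Kami\'nska, and Melbourne-Zweim\"uller) to $d\ge 2$. A naive Cram\'er-Wold reduction yields only marginal convergence of scalar projections and, crucially, the $\cSJ_1$-topology is \emph{not} preserved under linear combinations of coordinates, so joint tightness in $\cSJ_1$ must be established directly. This requires (i) producing the correct spectral measure $\sigma$ in the limit, which enters only through the multivariate regular-variation asymptotics in the perturbed transfer operator expansion, and (ii) showing that large jumps across different coordinates are genuinely synchronised as single-atom events -- which is exactly what the Gibbs-Markov anti-clustering gives, but which has to be quantified uniformly over all directions in $\bbS^{d-1}$.
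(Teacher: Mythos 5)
Your decomposition $V=\bar V+\chi$ with $\bar V=\E(V\mid\cP)$, the reduction of the $\chi$-part via martingale-coboundary plus a Burkholder/maximal estimate, and the observation that $\bar V$ inherits the regular variation of $V$, all coincide with the paper's strategy (its Lemma~\ref{lem:Vdecomp} and Proposition~\ref{prop:tW''u}\ref{prop:tW''u:neg}). Where you diverge genuinely is the core step: the paper does not carry out a multivariate Nagaev--Guivarc'h analysis of the perturbed transfer operator. Instead it invokes Tyran-Kami\'nska's $\R^d$-valued functional limit theorem for strictly stationary sequences under $\psi$-mixing (Theorem~\ref{thm:MaRd}, from \cite{TyranKaminska10Rd}) and reduces the whole problem to checking three hypotheses for $X_k=V'\circ F^k$: regular variation of $V'$, summability of $\psi(2^j)$ (which is immediate from the exponential Gibbs--Markov $\psi$-mixing estimate~\eqref{eq:mixing}), and the anticlustering/large-deviation condition (again immediate from~\eqref{eq:mixing} and invariance). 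This is considerably shorter than re-deriving the multidimensional stable FLT; it also sidesteps exactly the issue you flag, namely that Cram\'er--Wold and scalar $\cSJ_1$-convergence do not combine, because \cite{TyranKaminska10Rd} already works genuinely in $\R^d$. Your direct route (perturbed transfer operator for finite-dimensional distributions, then $\cSJ_1$-tightness via anti-clustering and Skorokhod's oscillation criterion) is a legitimate alternative and would make the argument more self-contained, at the cost of effectively re-proving a known general theorem. One imprecision worth flagging: the truncated small-jump sum $b_n^{-1}\sum_{j<k}\bar V\mathbf{1}_{\{|\bar V|\le\epsilon b_n\}}\circ F^j$ is \emph{not} $o_p(1)$ via a second-moment estimate alone, since for $\alpha\in(1,2)$ its compensator $n b_n^{-1}\E(\bar V\mathbf{1}_{\{|\bar V|\le\epsilon b_n\}})$ is of order $\epsilon^{1-\alpha}$ (not small as $\epsilon\to0$); one must work with the \emph{centered} truncated sum, with the centering terms absorbed into the limiting characteristic triplet. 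This is a routine repair and does not undermine the approach.
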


\begin{rmk}\label{rmk:weakZ}\
    \begin{enumerate}[label=(\alph*)]
        \item\label{rmk:weakZ:reg}
            If $V$ is regularly varying and $\lim_{n \to \infty} n \mu_Z( |V| > b_n ) = 1$,
            then $b_n$ is a regularly varying sequence. In particular, if $\mu_Z( |V| > n) \sim c n^{-\alpha}$
            for some $c > 0$,
            then $b_n \sim c^{1/\alpha} n^{1/\alpha}$.
        \item\label{rmk:weakZ:Lp}
            In many examples (including the intermittent maps in Section~\ref{sec:PM}),
            $\tau \in L^q$ for each $q < \alpha$, and
            there exist $C > 0$ and $\beta \in (0,1)$ such that
            $|V(z) - V(z')| \leq C \tau^\beta$ for all $z, z' \in a$, $a \in \cP$.
            This implies that $V - \E (V \mid \cP) \in L^p$ for some $p > \alpha$.
    \end{enumerate}
\end{rmk}

\begin{thm} \label{thm:tightZ}
    Suppose that $\tau$ is regularly varying with index $\alpha \in (1,2)$
    on $(Z, \mu_Z)$, and that $b_n$ satisfies $\lim_{n\to\infty} n\mu_Z(\tau>b_n)=1$.
    Then $\sup_n\int_Z \|W_n\|_{p\var} \mrd\mu_Z < \infty$ for all $p>\alpha$.
\end{thm}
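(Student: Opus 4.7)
The plan is a big-jump decomposition. Fix $p>\alpha$ and a truncation parameter $c>0$; write $V=V_c^{(s)}+V_c^{(l)}$ with $V_c^{(s)}=V\cdot 1_{\tau\le cb_n}$ (bounded in $L^\infty$ by $C_0cb_n$) and $V_c^{(l)}=V-V_c^{(s)}$, giving the splitting $W_n=W_n^{(s)}+W_n^{(l)}$. By the triangle inequality it suffices to control $\|W_n^{(l)}\|_{p\var}$ and $\|W_n^{(s)}\|_{p\var}$ separately.

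For the large-jump part I will use the crude bound $\|W_n^{(l)}\|_{p\var}\le\|W_n^{(l)}\|_{1\var}$, combined with stationarity, $|V_c^{(l)}|\le C_0\tau\cdot 1_{\tau>cb_n}$, and Karamata's theorem for the regularly varying $\tau$ (giving $\E[\tau\cdot 1_{\tau>cb_n}]\sim\tfrac{\alpha cb_n}{\alpha-1}\mu_Z(\tau>cb_n)$, together with $n\mu_Z(\tau>cb_n)\to c^{-\alpha}$ since $n\mu_Z(\tau>b_n)\to 1$ and $\tau$ is regularly varying): this yields $\E\|W_n^{(l)}\|_{p\var}\lesssim c^{1-\alpha}$ uniformly in $n$.

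For the small-jump part I will apply a Gordin-type martingale-coboundary decomposition $V_c^{(s)}-\E V_c^{(s)}=m+\chi\circ F-\chi$, where $Lm=0$ (so $M_k=\sum_{j<k}m\circ F^j$ is a reverse martingale with respect to the filtration $(F^{-k}\cB)_{k\ge 0}$) and $\chi=\sum_{k\ge 1}L^k(V_c^{(s)}-\E V_c^{(s)})$ converges in the Banach space of H\"older functions by the spectral gap of $L$. This splits the Birkhoff sum into a deterministic drift, the martingale $M_k$, and a coboundary $\chi\circ F^k-\chi$. The quantitative inputs are the Karamata moment bounds $\E(V_c^{(s)})^2\lesssim c^{2-\alpha}b_n^2/n$ and $\E|V_c^{(s)}|^p\lesssim c^{p-\alpha}b_n^p/n$ for $p>\alpha$; combining L\'epingle's $p$-variation inequality for reverse martingales with a Rosenthal estimate on $\E[M]_n^{p/2}$ then yields $\E\|b_n^{-1}M_{\floor{n\cdot}}\|_{p\var}^p\lesssim c^{(2-\alpha)p/2}+c^{p-\alpha}$ for $p\ge 2$, and the drift is handled trivially by its $1$-variation.

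The main obstacle is the coboundary term. The naive $L^\infty$ estimate $\|\chi\|_\infty\lesssim cb_n$ grows with $n$ and, via the worst-case inequality $\|g\|_{p\var}\le 2\|g\|_\infty n^{1/p}$ for a bounded cadlag path, would produce a useless $\|b_n^{-1}(\chi\circ F^{\floor{n\cdot}}-\chi)\|_{p\var}\lesssim cn^{1/p}$; to avoid this I will combine the exponential decay of $L^k V_c^{(s)}$ in the H\"older norm with the Karamata-type $L^p$-smallness of $V_c^{(s)}$ in order to control the $p$-variation of the stationary process $\chi\circ F^{\floor{n\cdot}}$ by moments of its increments rather than by $\|\chi\|_\infty$. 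A secondary technicality is the range $p\in(\alpha,2)$, outside the natural scope of the standard L\'epingle inequality; this is handled either by a Davis-type martingale inequality valid for $p\in(1,2)$ or by interpolation from a bound in a larger $L^q$-space. Assembling the three contributions together with the large-jump estimate yields the uniform bound $\sup_n\E\|W_n\|_{p\var}<\infty$.
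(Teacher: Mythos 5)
Your decomposition (big jump / small jump), the use of Karamata on the big-jump part via its $1$-variation, the Gordin martingale--coboundary decomposition on the small-jump part, and a $p$-variation martingale inequality (L\'epingle / Pisier--Xu, the latter being exactly right for $p\in(\alpha,2)$) are all the paper's actual steps. But your identification of the coboundary as ``the main obstacle'' rests on a wrong estimate, and the workaround you sketch for it is not needed and not justified.

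You assert that the natural bound is $\|\chi\|_\infty\lesssim cb_n$, growing with $n$. This is what you would get if you bounded the tail series $\chi=\sum_{k\ge1}L^kV_c^{(s)}$ by $\|V_c^{(s)}\|_\infty$ times a geometric sum. But that is not the correct estimate for Gibbs--Markov maps: the relevant bound is that \emph{one} application of the transfer operator already kills the $\tau$-weighted growth, because $P$ averages over branches with uniformly Lipschitz inverse Jacobians, so $\|P\,V_c^{(s)}\|$ (in the H\"older/Lipschitz norm used for the spectral gap) is controlled by the \emph{per-partition-element} Lipschitz constant $C_0$ of $V_c^{(s)}$ and not by its sup norm. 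Since truncating $V$ by $1_{\{\tau\le cb_n\}}$ does not change that Lipschitz constant (the indicator is constant on partition elements), one gets $\|\chi_n\|_\infty\le C(C_0+\E|V|)$ uniformly in $n$. Once this is in hand, the coboundary contribution is entirely trivial: $b_n^{-1}(\chi_n\circ F^{\lfloor n\cdot\rfloor}-\chi_n)$ has at most $n$ jumps of size $\le 2\|\chi_n\|_\infty b_n^{-1}$, so its $p$-variation to the power $p$ is $\lesssim n\,b_n^{-p}\lesssim 1$ for $p>\alpha$. No interplay between ``exponential decay of $L^kV$ in H\"older norm'' and ``Karamata $L^p$-smallness'' is needed, and indeed it is not clear how to turn your proposed ``control $p$-variation of the stationary process $\chi\circ F^{\lfloor n\cdot\rfloor}$ by moments of its increments'' into a valid argument: $p$-variation of a general stationary bounded sequence is not controlled by moments of single increments. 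So the gap in your proposal is not that the plan is wrong, but that you are missing the specific lemma (the uniform-in-$n$ bound on $\|\chi_n\|_\infty$, via the $\|PV\|\lesssim C_0$ estimate) that makes the coboundary term harmless; with it in place, everything else in your sketch goes through as you intended, the extra truncation parameter $c$ and the $p\ge2$ digression being unnecessary.
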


\subsection{Preliminaries about Gibbs-Markov maps}

We recall the following standard result.
%\pagebreak[2] % try to avoid page break right after lemma header
\begin{lemma}
    \label{lem:Vdecomp}
Let $V \colon Z\to\R^d$ be integrable with $\int_Z V \mrd \mu_Z = 0$
and satisfying~\eqref{eq:C0}.  Then
    \begin{enumerate}[label=(\alph*)]
        \item\label{lem:Vdecomp:chi}
            $V=m+\chi \circ F - \chi$, where $m$ is integrable with $\E(m \mid F^{-1}\cB)=0$, and
            $\|\chi\|_\infty \leq C C_0$ with $C > 0$ independent of $V$.
        \item\label{lem:Vdecomp:p}
            For every $p \in (1,2]$ there is a constant $C(p)$, depending only on $p$, such that
            \[
                \biggl\|\max_{k \leq n} \Bigl| \sum_{j=0}^{k-1} V \circ F^j \Bigr| \biggr\|_p
                \leq C(p) n^{1/p} (\|\chi\|_\infty + \|V\|_p)
                \; .
            \]
            (We do not exclude the case $\|V\|_p = \infty$.)
    \end{enumerate}
\end{lemma}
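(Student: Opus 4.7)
I would prove the two parts separately, using the Gordin martingale--coboundary method for part~\ref{lem:Vdecomp:chi} and then deriving part~\ref{lem:Vdecomp:p} as a formal consequence.

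For part~\ref{lem:Vdecomp:chi}, the plan is to use the transfer operator $\hat P$ of $F$ with respect to $\mu_Z$, characterised by $\int \hat P f\cdot g\,\mathrm{d}\mu_Z = \int f\cdot g\circ F\,\mathrm{d}\mu_Z$, which has the explicit form $\hat P f(y) = \sum_{a:Fa\ni y} p_a(y) f(v_a y)$ with $v_a = (F|_a)^{-1}$, $p_a(y) = \zeta_a(v_a y)h(v_a y)/h(y)$, $h = \mathrm{d}\mu_Z/\mathrm{d}\nu$, and $\sum_a p_a(y)=1$. The first task is to show that although $V$ itself is unbounded (only satisfying the partition-wise bound $|V|\le C_0\tau(a)$), the smoothed function $\hat PV$ lies in the Banach space $\mathcal B$ of $\theta$-Hölder functions on $Z$, with $\|\hat PV\|_{\mathcal B}\le CC_0$. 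The bound $\|\hat PV\|_\infty\le CC_0$ uses bounded distortion to give $p_a(y)\asymp \mu_Z(a)/\mu_Z(Fa)$, the uniform lower bound $\mu_Z(Fa)\ge \mu_Z^*>0$ that follows from $\inf_a\nu(Fa)>0$ and $h\ge c>0$, and the integrability $\int\tau\,\mathrm{d}\mu_Z<\infty$: splitting $V$ on each $a$ as $V_a+(V-V_a)$ and using $|V_a|\le C_0\tau(a)$, one estimates $|\hat PV(y)|\le CC_0 \sum_a \mu_Z(a)\tau(a)/\mu_Z^*=CC_0'$. The Hölder estimate proceeds similarly, by expanding $\hat PV(y)-\hat PV(y')$ and using distortion to control $p_a(y)-p_a(y')$ together with the Hölder hypothesis $|V(v_a y)-V(v_a y')|\le C_0\tau(a)d(y,y')^\theta$.

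Once $\hat PV\in \mathcal B$ with the desired bound, the standard spectral gap of $\hat P$ on $\mathcal B$ for mixing Gibbs--Markov maps (giving $\|\hat P^k\psi\|_{\mathcal B}\le C\rho^k\|\psi\|_{\mathcal B}$ for mean-zero $\psi$ and some $\rho\in(0,1)$) allows us to define $\chi := \sum_{k=1}^\infty \hat P^k V$ as an element of $\mathcal B$ with $\|\chi\|_\infty\le CC_0$. Setting $m:=V-\chi\circ F+\chi$, a direct computation using $\E(\psi\circ F\mid F^{-1}\cB)=\psi\circ F$ and $\E(\psi\mid F^{-1}\cB)=(\hat P\psi)\circ F$ shows $\E(m\mid F^{-1}\cB)=(\hat PV-\chi+\hat P\chi)\circ F=0$, since $\chi-\hat P\chi=\hat PV$ by construction.

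Part~\ref{lem:Vdecomp:p} follows by a reverse-martingale argument. Writing $S_kV:=\sum_{j=0}^{k-1}V\circ F^j = S_km + \chi\circ F^k-\chi$, we get $\max_{k\le n}|S_kV|\le \max_{k\le n}|S_km|+2\|\chi\|_\infty$. Fixing $n$, the sequence $M_j:=\sum_{i=1}^j m\circ F^{n-i}$ is a martingale with respect to the increasing filtration $\cG_j=F^{-(n-j)}\cB$ (since $m\circ F^{n-i}$ is $\cG_i$-measurable and $\E(m\circ F^{n-i}\mid\cG_{i-1})=\E(m\mid F^{-1}\cB)\circ F^{n-i}=0$), and $S_km=M_n-M_{n-k}$, so $\max_{k\le n}|S_km|\le 2\max_{j\le n}|M_j|$. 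Applying Doob's $L^p$ maximal inequality and the von Bahr--Esseen inequality for $L^p$ martingale differences with $1<p\le 2$ yields $\|\max_j|M_j|\|_p\le C(p)\|M_n\|_p\le C(p)n^{1/p}\|m\|_p$, and $\|m\|_p\le \|V\|_p+2\|\chi\|_\infty\le \|V\|_p+CC_0$ finishes the estimate. The main obstacle is the first step of part~\ref{lem:Vdecomp:chi}: obtaining a uniform $L^\infty$ and Hölder bound on $\hat PV$ despite $V$ itself being unbounded on $Z$. This crucially exploits the integrability of $\tau$ against $\mu_Z$ together with bounded distortion; all subsequent steps are routine consequences of the Gibbs--Markov spectral gap and standard martingale inequalities.
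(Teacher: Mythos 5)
Your proposal is correct and takes essentially the same route as the paper: a Gordin martingale--coboundary decomposition via the transfer operator $P$, with $\chi=\sum_{k\geq 1}P^kV$ built from the spectral gap, followed by a reverse-martingale maximal inequality. The only difference is cosmetic: where the paper cites a known result ([MN05, Lemma~2.2]) for the key estimate $\|PV\|\leq CC_0$, you sketch its proof directly from bounded distortion and integrability of $\tau$; and where the paper invokes Burkholder's inequality, you use Doob's maximal inequality together with the von Bahr--Esseen bound for martingale differences, which for $1<p\leq 2$ yields the same $n^{1/p}$ estimate.
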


\begin{proof}
    For $z, z' \in Z$, let $s(z, z')$ be the \emph{separation time}, i.e.\
    the minimal nonnegative integer such that
    $F^{s(z, z')}(z)$ and $F^{s(z, z')}(z')$ belong to different elements
    of $\cP$.
    Let $d_\theta$ be the separation metric on $Z$:
    \[
        d_\theta(z, z')
        = \lambda^{- \theta s(z, z')}
        \; .
    \]
    Note that $d(z, z')^\theta \leq d_\theta(z, z') (\diam Z)^\theta$,
    so $\theta$-H\"older observables with respect to $d$
    are $d_\theta$-Lipschitz.
    For an observable $\phi \colon Z \to \R^d$, let
    \[
        \|\phi\|
        =  \|\phi\|_\infty + 
        \sup_{z \neq z' } \frac{|\phi(z) - \phi(z')|}{d_\theta(z,z')}
        \; .
    \]
    
    Let $P \colon L^1(\mu_Z) \to L^1(\mu_Z)$ be the transfer operator
    corresponding to $F$ and $\mu_Z$, i.e.\
    \(\int_Z P \phi\,w \mrd\mu_Z = \int_Z \phi\,w\circ F \mrd\mu_Z\)
    for all $\phi\in L^1$, $w\in L^\infty$.
    %Suppose first that $\mu_Z$ is mixing.
    By for example~\cite[Section~1]{AaronsonDenker01},
    there are constants $C_1>0$, $\gamma\in(0,1)$ such that
    $\|P^k \phi\| \leq C_1 \gamma^k \|\phi\|$
    for all $\phi \colon Z \to \R^d$ with $\E \phi = 0$ and
    all $k \geq 0$.

    By~\cite[Lemma~2.2]{MN05},
    there is a constant $C_2>0$ independent of $V$ such that
    $\|PV\| \le C_0 C_2$ for all $V$ satisfying
    the stated conditions.
    Hence 
    \[
        \|P^k V\|
        = \|P^{k-1}P V\|
        \leq  C_1 \gamma^{k-1}\|PV\|
        \leq C_0 C_1 C_2 \gamma^{k-1}
        \; .
    \]
    
    Let $\chi = \sum_{k=1}^\infty P^k V$.
    Then $\|\chi\|_\infty\le \|\chi\| \le C_0C_1C_2(1-\gamma)^{-1}$.
    Let $m = V - \chi \circ F + \chi$.
    Define $U \colon L^1(\mu_Z)\to L^1(\mu_Z)$ by $U\phi=\phi\circ F$.
    Then $PU=I$ and $UP=\E(\,\cdot \mid F^{-1}\cB)$.
    Hence $\E(m\mid F^{-1}\cB)=UPm=U(PV-\chi+P\chi)=0$ proving
    part~\ref{lem:Vdecomp:chi}.

    For part~\ref{lem:Vdecomp:p}, we proceed as in
    the proof of~\cite[Proposition~4.3]{MZ15}.
    Fix $n > 0$ and let $M^n_k = \sum_{j=n-k}^{n-1} m \circ F^j$.
    By~\ref{lem:Vdecomp:chi}, $M^n_k$ is a martingale on $0 \leq k \leq n$.
    By Burkholder's inequality, there is a constant $C(p)$ depending only on $p$
    such that
    \[
        \bigl\| \max_{k \leq n} |M^n_k| \bigr\|_p
        \leq C(p) n^{1/p} \|m\|_p
        \leq C(p) n^{1/p} ( 2 \|\chi\|_\infty + \|V\|_p)
        \; .
    \]
    Next,
    \[
        \biggl\|
            \max_{k \leq n} \Bigl|
                \sum_{j=0}^{k-1} V \circ F^j
            \Bigr|
        \biggr\|_p
        \leq 2 \|\chi\|_\infty + 2 \bigl\| \max_{k \leq n} |M^n_k| \bigr\|_p
        \; ,
    \]
    and part~\ref{lem:Vdecomp:p} follows.
%
%In the general case where $\mu_Z$ is not necessarily mixing,
%it is still the case that $P$ has essential spectral radius less than $1$.
%Let $\omega_1,\dots,\omega_k\in\C$ be the eigenvalues for $P$ with $|\omega_j|=1$ including multiplicities.
%Nontrivial Jordan blocks for these eigenvalues lead to polynomial growth for $P^n$ which contradicts the fact that $\|P\phi\|_1\le \|\phi\|_1$ for all $\phi\in L^1$.  Hence we can write
%$V=V_0+V_1$ where $V_0$ admits a decomposition $V_0=m_0+\chi_0\circ F-\chi_0$ satisfying the same properties as before and 
%$PV_1=\sum_{j=1}^kg_j$ where 
%$Pg_j=\omega_j g_j$.  In addition,
%$\|g_j\|_\theta\lesssim \|PV\|_\theta\le C_0C_2$
%Since $\E V=0$, it follows from ergodicity that $\omega_j\ne1$ for all $j$.
%Now write $V_1=m_1+\chi_1\circ F-\chi_1$ where
%$\chi_1=\sum_{j=1}^k(1-\omega_j)^{-1}g_j$.
%Then $m_1\in\ker P$ so
%that $\E(m\mid F^{-1}\cB)=0$.  Taking $m=m_0+m_1$ and $\chi=\chi_0+\chi_1$ givers the required decomposition $V=m+\chi\circ F-\chi$.
\end{proof}

For sigma-algebras $\cF$ and $\cG$ on a common probability space $(\Omega,\PP)$, define
\begin{equation*}
    \psi(\cF, \cG)
    = \sup \biggl\{
        \frac{\bigl|\PP(A \cap B) - \PP(A) \PP(B) \bigr|}{\PP(A)\PP(B)}
        : A \in \cF, \ B \in \cG
    \biggr\}\;.
\end{equation*}
For $0\leq n \leq k$, let $\cP_n^k$ be the smallest sigma-algebra which contains $F^{-j} \cP$ for $j=n,\ldots, k$.
A standard property of mixing Gibbs-Markov maps (see for example~\cite[Section~1]{AaronsonDenker01})
is that there exist $\gamma\in(0,1)$ and $C>0$ such that for all $k \geq 0$,
$n \geq 1$,
\begin{equation} \label{eq:mixing}
    \psi(\cP_0^k, \cP_{n+k}^\infty)
    \leq C\gamma^n\;,
\end{equation}
where the probability measure in the definition of $\psi$ is $\mu_Z$. 

\subsection{Weak convergence to a L\'evy process}

In this subsection, we prove Theorem~\ref{thm:weakZ}.
We use the following result due to Tyran-Kami{\'n}ska~\cite{TyranKaminska10Rd}.

\begin{thm}
    \label{thm:MaRd}
    Let $X_0, X_1, \ldots$ be a strictly stationary sequence of integrable $\R^d$-valued
    random variables with $\E X_0 = 0$. For $0\le n \leq k$, let $\cF_n^k$ denote the sigma-algebra
    generated by $\{X_n, \ldots, X_k\}$. Suppose that:
    \begin{enumerate}[label=(\alph*)]
        \item\label{thm:MaRd:reg}
            $X_0$ is regularly varying with index $\alpha \in [1,2)$
            and $\sigma$ as in~Definition~\ref{def:reg}.
        \item\label{thm:MaRd:mix}
            $\sum_{j \geq 0} \psi(2^j) < \infty$,
            where
            $\psi(n) = \sup_{k\geq 0} \psi(\cF_0^k, \cF_{n+k}^\infty)$.
        \item\label{thm:MaRd:LD} 
            $\lim_{n \to \infty}
            \PP \bigl(
                |X_j| > \eps b_n
                \ \big\vert \ 
                |X_0| > \eps b_n
            \bigr)
            = 0$
            for all $\eps > 0$ and $j \geq 1$,
            where the sequence $b_n$ is such that
            $\lim_{n \to \infty} n \PP(|X_0| > b_n) = 1$.
    \end{enumerate}
    Then as $n \to \infty$, the random process $W_n$ given by
    $W_n(t) = b_n^{-1} \sum_{j = 0}^{\floor{nt}-1} X_j$
    converges to an $\alpha$-stable L\'evy process $L_\alpha$ in $D([0,1], \R^d)$ in
    the $\cSJ_1$ topology.
\end{thm}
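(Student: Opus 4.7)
The plan is to adapt the classical point-process approach of Davis--Hsing for regularly varying stationary sequences to the multi-dimensional setting. The idea is to encode the large values of the sequence into a marked point process on $[0,1]\times(\R^d\setminus\{0\})$, show that this point process converges weakly to a Poisson random measure, and then deduce the $\cSJ_1$-convergence of partial sums via a continuous mapping argument.

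First I would introduce a truncation parameter $\eps>0$ and decompose $W_n = W_n^{>\eps} + W_n^{\leq\eps}$, where the first part sums only those $X_j$ with $|X_j|>\eps b_n$ and the second collects the small terms (centred to preserve the zero-mean property). For the large part, consider the point measures
\[
    N_n = \sum_{j=0}^{n-1}\delta_{(j/n,\,X_j/b_n)}
\]
on $[0,1]\times(\R^d\setminus\{0\})$. Using a standard blocking argument—partition $\{0,\ldots,n-1\}$ into blocks of length $k_n$ with $k_n\to\infty$ and $k_n/n\to 0$—the $\psi$-mixing hypothesis~\ref{thm:MaRd:mix} makes blocks asymptotically independent, while the anti-clustering assumption~\ref{thm:MaRd:LD} forces at most one exceedance per block in the limit. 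Hence $N_n$ converges vaguely on $[0,1]\times\{|x|>\eps\}$ to a Poisson random measure $N$ with intensity $dt\otimes\nu_\alpha$, where $\nu_\alpha$ is the tail measure $\nu_\alpha(rB)=r^{-\alpha}\sigma(B)$ induced by the regular variation of $X_0$.

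Next I would apply the continuous mapping theorem to the summation functional
\[
    \Psi_\eps \colon \mu \mapsto \Bigl(\textstyle\sum_{(s,x)\in\mu,\,s\leq t,\,|x|>\eps} x\Bigr)_{t\in[0,1]}.
\]
Almost surely, $N$ has distinct atoms and no mass on the boundary $\{|x|=\eps\}$, so $\Psi_\eps$ is continuous at $N$ in the $\cSJ_1$ topology. This yields $W_n^{>\eps}\to_w L_\alpha^{>\eps}$ in $\cSJ_1$, where $L_\alpha^{>\eps}$ is the compound-Poisson process with Lévy measure $\nu_\alpha\mathbf{1}_{|x|>\eps}$. A direct computation of the characteristic exponent confirms that $L_\alpha^{>\eps}\to L_\alpha$ as $\eps\to 0$, with the claimed spectral measure $\Lambda=\cos(\pi\alpha/2)\Gamma(1-\alpha)\sigma$ (the constant arises from the normalisation linking $\nu_\alpha$ to the stable characteristic function).

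To close the argument one needs the negligibility estimate
\[
    \lim_{\eps\to 0}\limsup_{n\to\infty}\PP\Bigl(\sup_{t\in[0,1]}|W_n^{\leq\eps}(t)-c_n^\eps t|>\eta\Bigr)=0
\]
for every $\eta>0$ and an appropriate deterministic centring $c_n^\eps$; combined with the previous step this gives $W_n\to_w L_\alpha$ in $\cSJ_1$ by the standard three-$\eps$ approximation. This negligibility step is the main obstacle: since $\alpha\in[1,2)$ the sequence is not square-integrable in a scale-free way, so a naïve second-moment bound fails. One must exploit $\E X_0=0$ together with $\psi$-mixing to obtain a maximal inequality for the centred truncated sums (e.g.\ via a Bernstein-type bound for $\psi$-mixing sequences applied to the truncated observables, whose variance scales like $\eps^{2-\alpha}$ by regular variation). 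The interplay between the truncation level $\eps$ and the mixing rate is where the hypothesis $\sum_j\psi(2^j)<\infty$ is essential, as it gives summable dependence uniform in the truncation.
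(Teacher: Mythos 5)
Your proof takes a genuinely different route from the paper's. The paper proves Theorem~\ref{thm:MaRd} in a few lines by invoking Tyran-Kami\'nska's $\R^d$-valued limit theorem \cite[Theorem~1.1]{TyranKaminska10Rd}: hypothesis~\ref{thm:MaRd:mix} together with the inequality $\rho\leq\psi$ and \cite[Lemma~4.8]{TyranKaminska10Rd} gives the mixing-rate condition required there, and hypothesis~\ref{thm:MaRd:LD} together with $\varphi\leq\psi$ and \cite[Corollary~1.3]{TyranKaminska10Rd} gives the anti-clustering (``LD'') condition. You instead reconstruct from scratch the Davis--Hsing-style point-process argument that underlies that citation: convergence of the exceedance point process via a blocking argument (mixing for asymptotic independence across blocks, condition~\ref{thm:MaRd:LD} to prevent clusters within a block), continuous mapping through the truncated summation functional in $\cSJ_1$, and negligibility of the centred small-jump part. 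Your outline correctly identifies the role of each hypothesis, but it leaves the hardest step --- the negligibility estimate --- only sketched, and there a Bernstein-type bound is not the natural tool: with only second-moment control at truncation level $\eps b_n$ (variance scaling like $\eps^{2-\alpha}$ by Karamata), one should instead use a Rosenthal/Doob-type maximal inequality for $\rho$-mixing sums (or, in the dynamical setting, a martingale decomposition as in Lemma~\ref{lem:Vdecomp}) and track carefully the centring $c_n^\eps$ against $\E X_0=0$, which is delicate near $\alpha=1$. The trade-off is clear: the paper's route buys brevity and isolates a single external dependency; yours buys a self-contained picture at the cost of essentially re-proving the cited theorem, including exactly the estimates the paper delegates to it.
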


\begin{rmk}
    \label{rmk:MaRd:law}
    It is implicit in~\cite{TyranKaminska10Rd} that $L_\alpha$ has spectral
    measure $\Lambda = \cos \frac{\pi \alpha}{2} \Gamma(1 - \alpha) \sigma$,
    where $\sigma$ is the measure on $\bbS^{d-1}$ for $X_0$ as in Definition~\ref{def:reg}.
\end{rmk}
% Hidden remark:
% If you want to verify remark above, look at Marta's paper and
% Remark 14.6 and Lemma 14.11 in Sato's
% Levy processes and infinitely divisible distributions

\begin{proof}[Proof of Theorem~\ref{thm:MaRd}]
We verify the hypotheses of~\cite[Theorem~1.1]{TyranKaminska10Rd}.
In the notation of~\cite{TyranKaminska10Rd}, observe that~\ref{thm:MaRd:mix} and~\cite[Lemma~4.8]{TyranKaminska10Rd} together with $\rho\leq \psi$ imply that~\cite[Eq.~(1.6)]{TyranKaminska10Rd} holds.
Moreover,~\ref{thm:MaRd:LD} and~\cite[Corollary~1.3]{TyranKaminska10Rd} together with $\varphi\leq \psi$ imply that~\cite[\textbf{LD}($\phi_0$)]{TyranKaminska10Rd} holds (for inequalities concerning $\rho$, $\psi$, and $\varphi$, see~\cite{B05}).
\end{proof}

Write $V = V' + V''$ where $V' = \E(V \mid \cP)$. Let
\[
    W'_n(t) = b_n^{-1} \sum_{j=0}^{\floor{nt}-1} V' \circ F^j
    \; , \qquad
    W''_n(t) = b_n^{-1} \sum_{j=0}^{\floor{nt}-1} V'' \circ F^j
    \;.
\]

%First we show that $W''_n$ is negligible:
\begin{prop}
    \label{prop:tW''u}
    \
    \begin{enumerate}[label=(\roman*)]
        \item\label{prop:tW''u:Levy}
            $W'_n$ converges in $\cSJ_1$ to the $\alpha$-stable L\'evy process $L_\alpha$ 
            with spectral measure
            $\Lambda = \cos \frac{\pi \alpha}{2} \Gamma(1 - \alpha) \sigma$.
        \item\label{prop:tW''u:neg}
            $\bigl\|\sup_{t\in[0,1]}|W''_n(t)|\bigr\|_1 \to 0$ as $n \to \infty$.
    \end{enumerate}
\end{prop}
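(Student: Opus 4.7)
The strategy is to exploit the decomposition $W_n = W'_n + W''_n$: part~\ref{prop:tW''u:Levy} will follow by applying Theorem~\ref{thm:MaRd} to the stationary sequence $X_j = V' \circ F^j$ on $(Z, \mu_Z)$, while part~\ref{prop:tW''u:neg} is essentially a maximal $L^p$-estimate coming from the martingale-coboundary decomposition in Lemma~\ref{lem:Vdecomp}. The $L^p$ control of $V'' = V - \E(V\mid\cP)$ is the key input used in both parts: in~\ref{prop:tW''u:Levy} it ensures that the truncation to $V'$ does not change the regular variation behaviour, and in~\ref{prop:tW''u:neg} it turns the coboundary remainder into a genuinely negligible term.

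For~\ref{prop:tW''u:Levy}, I will verify the three hypotheses of Theorem~\ref{thm:MaRd} in turn. Regular variation of $V'$ with index $\alpha$ and spectral measure $\sigma$ will follow from $V - V' = V'' \in L^p$ with $p > \alpha$: Markov's inequality gives $\mu_Z(|V''| > t) \lesssim t^{-p} = o(t^{-\alpha})$, and a standard sandwich argument shows that $V'$ inherits the tail asymptotics of $V$ on every cone in $\R^d$. Exponential $\psi$-mixing is immediate: since $V'$ is $\cP$-measurable, $X_j$ is $F^{-j}\cP$-measurable, so $\cF_0^k \subset \cP_0^k$ and $\cF_{n+k}^\infty \subset \cP_{n+k}^\infty$, and~\eqref{eq:mixing} together with monotonicity of $\psi$ gives $\psi(n) \le C\gamma^n$, which is trivially summable along $\{2^j\}$. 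The large-deviation condition~\ref{thm:MaRd:LD} is the most delicate point, but it reduces to a single application of~\eqref{eq:mixing} at depth $j$ (not $n$): with $k=0$, $A_n = \{|X_0| > \eps b_n\} \in \cP_0^0$, $B_n = \{|X_j| > \eps b_n\} \in \cP_j^\infty$,
\[
    \mu_Z(A_n \cap B_n) \le (1 + C\gamma^j)\,\mu_Z(A_n)\,\mu_Z(B_n) = (1 + C\gamma^j)\,\mu_Z(|X_0| > \eps b_n)^2,
\]
and dividing by $\mu_Z(|X_0| > \eps b_n) \sim \eps^{-\alpha}/n \to 0$ yields the required conclusion. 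The identification of the limiting spectral measure as $\cos(\pi\alpha/2)\Gamma(1-\alpha)\sigma$ is then Remark~\ref{rmk:MaRd:law}.

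For~\ref{prop:tW''u:neg}, observe that $\E(V'' \mid \cP) = 0$ by construction, and that $V''$ inherits the bound~\eqref{eq:C0} up to a constant (differences within a partition element coincide with those of $V$, and $|V''(z)| \le |V(z)| + |V'(a)| \le 2C_0\tau(a)$). Applying Lemma~\ref{lem:Vdecomp}\ref{lem:Vdecomp:p} to $V''$ with the given $p > \alpha$ gives
\[
    \Bigl\|\sup_{t\in[0,1]} |W''_n(t)| \Bigr\|_p
    = b_n^{-1} \Bigl\| \max_{k \le n} \Bigl| \sum_{j=0}^{k-1} V'' \circ F^j \Bigr| \Bigr\|_p
    \lesssim b_n^{-1} n^{1/p}.
\]
Since $b_n$ is regularly varying with index $1/\alpha$ by Remark~\ref{rmk:weakZ}\ref{rmk:weakZ:reg} and $1/p < 1/\alpha$, the right-hand side tends to $0$, and Jensen converts the $L^p$ bound into the required $L^1$ convergence. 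The main obstacle is thus concentrated in~\ref{prop:tW''u:Levy}, and more specifically in the LD condition; once this is handled via the one-step mixing bound above, the remainder of the argument is a straightforward assembly of standard facts.
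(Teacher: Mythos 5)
Your proposal is correct and follows essentially the same route as the paper: part~(i) by verifying the three hypotheses of Theorem~\ref{thm:MaRd} for $X_j = V'\circ F^j$ (regular variation inherited from $V$ via the $L^p$ bound on $V''$, $\psi$-mixing from~\eqref{eq:mixing}, and the large-deviation condition from a single application of~\eqref{eq:mixing} at depth $j$ plus $F$-invariance), and part~(ii) via Lemma~\ref{lem:Vdecomp}\ref{lem:Vdecomp:p} applied to $V''$ to get an $L^p$ maximal bound of order $n^{1/p} = o(b_n)$. The only detail worth flagging is that Lemma~\ref{lem:Vdecomp}\ref{lem:Vdecomp:p} is stated for $p\in(1,2]$, so one should take $p\in(\alpha,2)$ (always possible since $\alpha<2$), which the paper does explicitly and you pass over.
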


\begin{proof}
    To prove part~\ref{prop:tW''u:Levy}, we verify the hypotheses of
    Theorem~\ref{thm:MaRd} with $X_k = V' \circ F^k$.
    Since $\mu_Z$ is $F$-invariant, 
    $\{V' \circ F^k\}_{k \geq 0}$ is a strictly
    stationary sequence of $\R^d$-valued random variables.
    The remaining hypotheses are verified as follows
    \begin{itemize}
        \item[\ref{thm:MaRd:reg}]
            The observable $V$ is regularly varying with index $\alpha$ and measure $\sigma$, and
            $V'' \in L^p$ with $p > \alpha$, so $V'=V-V''$ is regularly
            varying with the same $\alpha$ and $\sigma$.
        \item[\ref{thm:MaRd:mix}] This is a consequence of~\eqref{eq:mixing}.
        
        \item[\ref{thm:MaRd:LD}]
            It follows from~\eqref{eq:mixing} and invariance of $\mu_Z$ under $F$
            that
            \[
                \mu_Z \bigl(
                    |V'\circ F^j| > \eps b_n
                    \ \big\vert \ 
                    |V'|          > \eps b_n
                \bigr)
                \lesssim  \mu_Z (|V'| > \eps b_n)
                \; .
            \]
    \end{itemize}
    
    Now we prove part~\ref{prop:tW''u:neg}.    
    By the assumptions of Theorem~\ref{thm:weakZ}, $V'' \in L^p$ for some $p \in( \alpha,2)$.
    Note that $|V''| \lesssim \tau$, $\E V'' = 0$ and for each $z,z' \in a$, $a \in \cP$,
    \[
        |V''(z) - V''(z')|=|V(z)-V(z')| \le C_0 \tau(a) d(Fz, Fz')^\theta
        \; .
    \]
    Hence by Lemma~\ref{lem:Vdecomp}\ref{lem:Vdecomp:p}, 
    \(
        \bigl\|
            \max_{k \leq n} |
                \sum_{j=0}^{k-1} V'' \circ F^j
            |
        \bigr\|_p
        \lesssim n^{1/p} = o(b_n)
    \).
\end{proof}

\begin{proof}[Proof of Theorem~\ref{thm:weakZ}]
    By Proposition~\ref{prop:tW''u}, 
    $W_n=W_n'+W_n'' \to_w L_\alpha$.
\end{proof}

\subsection{Tightness in \texorpdfstring{$p$}{p}-variation}

In this subsection we prove Theorem~\ref{thm:tightZ}.

First we record the following elementary properties of $\tau$.
(The Gibbs-Markov structure is not required here; the proof only uses that 
$\tau$ is regularly varying with values in $\{1,2,\ldots\}$ and that $\mu_Z$ is $F$-invariant.)

\begin{prop} \label{prop:tau}
    Let $p>\alpha$.
    Then
    \begin{enumerate}[label=(\alph*)]
        \item\label{prop:tau:p}
            $\E (\tau^p  1_{\{\tau\le b_n\}})=O(n^{-1}b_n^p)$,
        \item\label{prop:tau:1}
            $\E (\tau  1_{\{\tau\ge b_n\}})=O(n^{-1}b_n)$,
        \item\label{prop:tau:sum}
            $\E\big\{ \big(\sum_{j=0}^{n-1} \tau^p \circ F^j\big)^{1/p} \big\}
            =O(b_n)$.
    \end{enumerate}
\end{prop}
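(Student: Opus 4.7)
Parts (a) and (b) are standard consequences of Karamata's theorem for regularly varying functions, once one notes that the hypothesis $n\mu_Z(\tau > b_n) \to 1$ forces $\mu_Z(\tau > b_n) \sim 1/n$. First, for (a), I would rewrite the truncated moment via the tail formula
\[
  \E(\tau^p \mathbb{1}_{\{\tau \le b_n\}})
  \le p \int_0^{b_n} t^{p-1}\mu_Z(\tau > t)\mrd t,
\]
and apply Karamata (valid since $p > \alpha$) to obtain the right-hand side is asymptotic to $\frac{p}{p-\alpha} b_n^p \mu_Z(\tau > b_n) = O(n^{-1} b_n^p)$. Similarly, for (b), decompose
\[
  \E(\tau \mathbb{1}_{\{\tau \ge b_n\}})
  = b_n \mu_Z(\tau \ge b_n) + \int_{b_n}^\infty \mu_Z(\tau > t)\mrd t,
\]
and Karamata (valid since $\alpha > 1$) gives the integral $\sim \frac{1}{\alpha-1} b_n \mu_Z(\tau > b_n)$, yielding $O(n^{-1} b_n)$.

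The bulk of the work lies in (c), which I would deduce from (a), (b), and $F$-invariance of $\mu_Z$ by truncating at level $b_n$. Write $\tau = \tau'_n + \tau''_n$ with $\tau'_n = \tau \mathbb{1}_{\{\tau \le b_n\}}$ and $\tau''_n = \tau \mathbb{1}_{\{\tau > b_n\}}$; since these have disjoint supports, $\tau^p = (\tau'_n)^p + (\tau''_n)^p$, and the elementary inequality $(a+b)^{1/p} \le a^{1/p} + b^{1/p}$ (valid for $a,b \ge 0$, $p \ge 1$) reduces the problem to estimating the two pieces separately.

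For the bounded piece, Jensen's inequality combined with stationarity and part (a) gives
\[
  \E\left(\sum_{j=0}^{n-1} (\tau'_n)^p \circ F^j\right)^{1/p}
  \le \bigl(n \E (\tau'_n)^p\bigr)^{1/p} = O(b_n).
\]
For the tail piece, I would instead use the pointwise bound $(\sum_j x_j^p)^{1/p} \le \sum_j x_j$ (for $x_j \ge 0$, $p \ge 1$) together with part (b):
\[
  \E\left(\sum_{j=0}^{n-1} (\tau''_n)^p \circ F^j\right)^{1/p}
  \le \E \sum_{j=0}^{n-1} \tau''_n \circ F^j = n \E \tau''_n = O(b_n).
\]

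There is no serious obstacle. The one mild subtlety is choosing the right inequality for each of the two pieces so that the bounds match: Jensen is needed for the bounded piece (applying $\ell^p \hookrightarrow \ell^1$ there would only give the much weaker bound $O(n)$), while the $\ell^p \hookrightarrow \ell^1$ bound is required for the tail piece (Jensen would demand $\E\tau^p < \infty$, which fails since $p > \alpha$ and $\tau$ is regularly varying of index $\alpha$).
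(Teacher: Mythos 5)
Your proof is correct and follows essentially the same route as the paper's. For (a) and (b) you use the continuous tail-probability formula before invoking Karamata, while the paper works with the discrete sum $\sum_{j\le b_n}j^p\mu_Z(\tau=j)$ and a mean-value bound on $j^p-(j-1)^p$, but this is a cosmetic variant; and your proof of (c) — the split $\tau^p=(\tau\mathbb 1_{\{\tau\le b_n\}})^p+(\tau\mathbb 1_{\{\tau>b_n\}})^p$, subadditivity of $x\mapsto x^{1/p}$, then Jensen for the bounded part and $\ell^p\hookrightarrow\ell^1$ for the tail — is exactly the paper's argument, including the observation that swapping the two inequalities would fail.
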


\begin{proof}
We have
    \[
        \E(\tau^p1_{\{\tau\le b_n\}})
        =\sum_{j\le b_n}j^p\mu_Z(\tau=j)
        \le \sum_{j\le b_n}(j^p-(j-1)^p)\mu_Z(\tau\ge j) \le
        p\sum_{j\le b_n}j^{p-1}\mu_Z(\tau\ge j) 
        \;.
    \]
    By Karamata's theorem~\cite[Proposition~1.5.8]{BGT}, $\E(\tau^p1_{\{\tau\le b_n\}})\lesssim b_n^p\mu_Z(\tau\ge b_n)$,
    so part~\ref{prop:tau:p} follows by definition of $b_n$. 
    A similar calculation proves part~\ref{prop:tau:1}.
    Next,
    \begin{align*}
        \Big(\sum_{j=0}^{n-1} \tau^p \circ F^j\Big)^{1/p}
        & \leq \Big(\sum_{j=0}^{n-1} \bigl(\tau^p 1_{\{\tau > b_n\}}\bigr) \circ F^j\Big)^{1/p}
        + \Big(\sum_{j=0}^{n-1} \bigl(\tau^p 1_{\{\tau \leq b_n\}} \bigr) \circ F^j\Big)^{1/p}
        \\ & \leq \sum_{j=0}^{n-1} \bigl(\tau 1_{\{\tau > b_n\}}\bigr) \circ F^j
        + \Big(\sum_{j=0}^{n-1} \bigl(\tau^p 1_{\{\tau \leq b_n\}} \bigr) \circ F^j\Big)^{1/p}
        \; .
    \end{align*}
    By Jensen's inequality, invariance of $\mu_Z$ and parts~\ref{prop:tau:p} and~\ref{prop:tau:1},
    \begin{align*}
        \E \Big\{\Big(\sum_{j=0}^{n-1} \tau^p \circ F^j\Big)^{1/p} \Big\}
        & \leq \sum_{j=0}^{n-1} \E \bigl\{ \bigl(\tau 1_{\{\tau > b_n\}}\bigr) \circ F^j \bigr\}
        + \Big(\sum_{j=0}^{n-1} \E \bigl\{ \bigl(\tau^p 1_{\{\tau \leq b_n\}} \bigr) \circ F^j \bigr\} \Big)^{1/p}
        \\
        & = n \E( \tau 1_{\{\tau > b_n\}})
        + \big( n \E( \tau^p 1_{\{\tau \leq b_n\}}) \big)^{1/p}
        \lesssim b_n
        \; ,
    \end{align*}
    proving part~\ref{prop:tau:sum}.
\end{proof}

Write 
$V=V_n'-\E V_n'+V_n''$, where
\[
  \SMALL
  V_n'=V1_{\{\tau > b_n\}}\;,
  \quad V_n''=V1_{\{\tau \le b_n\}}-\E( V1_{\{\tau \le b_n\}})\;.
\]
Accordingly, define $W_n=W_n'-\E W_n'+W_n''$, where
\[
    W_n'(t)=b_n^{-1}\sum_{j=0}^{\floor{nt}-1} V_n'\circ F^j\;, \qquad
    W_n''(t)=b_n^{-1}\sum_{j=0}^{\floor{nt}-1} V_n''\circ F^j\;.
\]

\begin{prop} \label{prop:Wn'}
$\sup_n\E \|W_n'\|_{1\var} < \infty$.
% and $\sup_n\|\E W_n'\|_{1\var} < \infty$.
\end{prop}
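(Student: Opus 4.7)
The plan is very short because $W_n'$ is by construction a piecewise constant process whose jumps correspond exactly to the indices $j$ at which $\tau \circ F^j > b_n$, so the $1$-variation reduces to a sum of jump magnitudes and the estimate is then immediate from Proposition~\ref{prop:tau}.

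Concretely, I would first observe that $W_n'$ is a c\`adl\`ag step function with jumps of size $b_n^{-1} V_n' \circ F^j$ at times $j/n$, $0 \le j \le n-1$. Since the $1$-variation of a step function equals the sum of the absolute values of its jumps, and since $|V_n'| \le C_0 \tau \, 1_{\{\tau > b_n\}}$ by the bound~\eqref{eq:C0}, we obtain the pointwise estimate
\[
\|W_n'\|_{1\var} \le b_n^{-1} \sum_{j=0}^{n-1} |V_n'| \circ F^j \le C_0 b_n^{-1} \sum_{j=0}^{n-1} (\tau \, 1_{\{\tau > b_n\}}) \circ F^j.
\]

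Taking expectation and using the $F$-invariance of $\mu_Z$, each of the $n$ terms contributes the same amount, so
\[
\E \|W_n'\|_{1\var} \le C_0 \, n \, b_n^{-1} \, \E(\tau \, 1_{\{\tau > b_n\}}).
\]
By Proposition~\ref{prop:tau}\ref{prop:tau:1}, $\E(\tau \, 1_{\{\tau \ge b_n\}}) = O(n^{-1} b_n)$, and therefore $\E \|W_n'\|_{1\var} = O(1)$ uniformly in $n$, as required.

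There is essentially no obstacle here: the whole point of splitting off $V_n' = V 1_{\{\tau > b_n\}}$ is that its contribution to the process can be controlled in total variation (the strongest possible $p$-variation norm, $p=1$), because there are only $O(n \mu_Z(\tau > b_n)) = O(1)$ nonzero summands on average and each is of size $O(\tau/b_n)$. The complementary piece $W_n''$ will require the more delicate maximal/martingale argument of Lemma~\ref{lem:Vdecomp}\ref{lem:Vdecomp:p} to bound its $p$-variation for $p > \alpha$, but that is a separate proposition.
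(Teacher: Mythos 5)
Your proof is correct and follows exactly the same route as the paper: compute the $1$-variation of the step function $W_n'$ as the sum of jump magnitudes $b_n^{-1}|V_n'|\circ F^j$, bound $|V_n'|\le C_0\,\tau\,1_{\{\tau>b_n\}}$ using~\eqref{eq:C0}, take expectations using $F$-invariance of $\mu_Z$, and invoke Proposition~\ref{prop:tau}\ref{prop:tau:1}. The only cosmetic difference is that the paper records the $1$-variation of a step function as an equality rather than an inequality, which makes no difference here.
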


\begin{proof}
    By Proposition~\ref{prop:tau}\ref{prop:tau:1},
    $\E|V_n'|\le C_0 \E\bigl(\tau 1_{\{\tau > b_n\}}\bigr) \lesssim n^{-1}b_n$.
Hence
    \[
\E\|W_n'\|_{1\var}=\E\Bigl(b_n^{-1}\sum_{j=0}^{n-1}|V_n'|\circ F^j\Bigr)
    = nb_n^{-1} \E|V_n'|=O(1)
\;,
\]
as required.
%Also, $\|\E W_n'\|_{1\var} \leq \E \|W_n'\|_{1\var}$.
\end{proof}

\begin{prop} \label{prop:Wn''}  
    $\sup_n\E \|W_n''\|_{p\var}^p < \infty$ for all $p\in(\alpha, 2)$.
\end{prop}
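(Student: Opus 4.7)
The plan is to apply the martingale--coboundary method of Lemma~\ref{lem:Vdecomp}\ref{lem:Vdecomp:chi} to $V_n''$. First I would verify that $V_n''$ satisfies~\eqref{eq:C0} with a constant $C_0'$ independent of $n$: for $z,z' \in a\in\cP$, using $\tau \geq 1$,
\[
|V_n''(z)| \leq |V(z)| + |\E(V\,1_{\{\tau\leq b_n\}})| \leq C_0\tau(a) + C_0\E\tau \leq C_0(1+\E\tau)\tau(a),
\]
while the Lipschitz bound in~\eqref{eq:C0} is inherited from $V$ when $\tau(a)\leq b_n$ (since the centering constant $\E(V\,1_{\{\tau\leq b_n\}})$ is constant on $a$) and is trivial when $\tau(a) > b_n$, where $V_n''$ itself is constant on $a$. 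Lemma~\ref{lem:Vdecomp}\ref{lem:Vdecomp:chi} then delivers $V_n'' = m_n + \chi_n\circ F - \chi_n$ with $\E(m_n\mid F^{-1}\cB) = 0$ and $\|\chi_n\|_\infty \leq K$ for some $K$ independent of $n$. Summing gives the decomposition
\[
b_n W_n''(t) = \tilde M_{\floor{nt}} + \bigl(\chi_n\circ F^{\floor{nt}} - \chi_n\bigr), \qquad \tilde M_k := \sum_{j=0}^{k-1} m_n\circ F^j,
\]
and the two pieces can be estimated separately.

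The coboundary piece is handled deterministically. The step function $t\mapsto \chi_n\circ F^{\floor{nt}} - \chi_n$ takes at most $n+1$ values, each of modulus at most $2K$, so any partition of $[0,1]$ produces at most $n$ subintervals each contributing at most $(2K)^p$ to the $p$-variation sum. Hence $\|\chi_n\circ F^{\floor{n\cdot}} - \chi_n\|_{p\var}^p \leq (n+1)(2K)^p = O(n)$. Since $b_n \sim n^{1/\alpha}$ and $p>\alpha$, the normalised coboundary contribution is of order $b_n^{-p}\,n = n^{1-p/\alpha}\to 0$.

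For the martingale piece I would first reverse time: setting $S^n_k := \sum_{j=n-k}^{n-1} m_n\circ F^j$ produces a genuine (forward) martingale with respect to the increasing filtration $\cF^n_k = F^{-(n-k)}\cB$, by the $F$-invariance of $\mu_Z$ and the identity $\E(m_n\mid F^{-1}\cB)=0$ (cf.\ the proof of Lemma~\ref{lem:Vdecomp}\ref{lem:Vdecomp:p}); moreover $\|\tilde M\|_{p\var} = \|S^n\|_{p\var}$. I would then invoke a Lépingle-type $p$-variation inequality for $\R^d$-valued purely discontinuous martingales in the regime $p\in(1,2]$,
\[
\E\|S^n\|_{p\var}^p \leq C_p\sum_{k=1}^n \E\bigl|m_n\circ F^{n-k}\bigr|^p = C_p\,n\,\E|m_n|^p,
\]
where the equality uses $F$-invariance. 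Since $|m_n| \leq |V_n''| + 2\|\chi_n\|_\infty \lesssim \tau\,1_{\{\tau\leq b_n\}} + 1$, Proposition~\ref{prop:tau}\ref{prop:tau:p} yields $\E|m_n|^p \lesssim n^{-1}b_n^p$, so $\E\|\tilde M\|_{p\var}^p \lesssim b_n^p$.

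Combining the two estimates via the elementary subadditivity $\|A+B\|_{p\var}^p \leq 2^{p-1}(\|A\|_{p\var}^p + \|B\|_{p\var}^p)$ gives
\[
\E\|W_n''\|_{p\var}^p \lesssim b_n^{-p}\bigl(\E\|\tilde M\|_{p\var}^p + n\bigr) \lesssim 1 + n^{1-p/\alpha} = O(1),
\]
as required. The main obstacle is the martingale $p$-variation inequality used in the third step: Lépingle's classical inequality is stated for $p>2$ with bound of order $\E[M]_n^{p/2}$, and a naive interpolation between $1$-variation and $\|M\|_\infty$ via H\"older is insufficient (it produces an unwanted factor $n^{(p-1)/p}$). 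The bound $\E\|S^n\|_{p\var}^p \lesssim \sum_k \E|X_k|^p$ in the sub-Brownian regime $p\in(1,2]$ must either be cited from the rough-path/martingale literature or established directly, e.g.\ by a dyadic chaining argument combined with Doob's maximal inequality and the von Bahr--Esseen inequality applied blockwise.
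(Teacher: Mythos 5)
Your proof is correct and follows essentially the same route as the paper: apply the martingale--coboundary decomposition of Lemma~\ref{lem:Vdecomp}\ref{lem:Vdecomp:chi} to $V_n''$ with bounds uniform in $n$, estimate the coboundary part deterministically by $nb_n^{-p}(2\|\chi_n\|_\infty)^p\lesssim 1$, reverse time to obtain a genuine martingale, and then apply a Burkholder--Davis--Gundy-type bound for the $p$-variation of a discrete martingale in the regime $p\in(1,2]$ together with $\E|m_n|^p\lesssim n^{-1}b_n^p$ from Proposition~\ref{prop:tau}\ref{prop:tau:p}. The inequality you correctly identify as the crux of the argument -- $\E\|S^n\|_{p\var}^p\lesssim\sum_k\E|m_n\circ F^{n-k}|^p$ for $p\in(1,2]$ -- is precisely Pisier--Xu~\cite[Theorem~2.1]{PisierXu88}, which is what the paper cites; you are right that L\'epingle's classical $p>2$ inequality with quadratic variation does not apply, and the paper's remark after the proposition notes that the weaker L\'epingle result~\cite[Proposition~2]{Lepingle76} would give only $\sup_n\E\|W_n''\|_{p\var}^q<\infty$ for $q<p$, which is still enough for the tightness application. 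One small clean-up: you do not actually need to re-verify~\eqref{eq:C0} for $V_n''$ case-by-case on $\{\tau(a)\le b_n\}$ vs.\ $\{\tau(a)>b_n\}$, since $|V_n''(z)-V_n''(z')|\le|V(z)-V(z')|\le C_0\tau(a)d(Fz,Fz')^\theta$ holds in both cases (the difference vanishes on the second).
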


\begin{proof}
Note that $\E V_n''=0$, that
$|V_n''|\le |V|+\E|V|\le C_1\tau$ where $C_1=C_0+\E|V|$, and that
$|V_n''(z)-V_n''(z')|\le |V(z)-V(z')|\le C_0\tau(a) d(Fz,Fz')^\theta$
for all $z,z'\in a$, $a\in\cP$.
By Lemma~\ref{lem:Vdecomp}\ref{lem:Vdecomp:chi},
    $V''_n = m_n + \chi_n \circ F - \chi_n$,
    where $ \sup_n \|\chi_n\|_\infty < \infty$
    and $\E(m_n \mid F^{-1} \cB) = 0$.
    Then
    \[
        \|m_n\|_p
        \le \|V_n''\|_p + 2\|\chi_n\|_p
        \le 2\|V 1_{\{\tau\le b_n\}}\|_p + 2\|\chi_n\|_\infty
    \]
    and
        $\E |V1_{\{\tau\le b_n\}}|^p 
        \le C_0^p\E\big( \tau^p 1_{\{\tau \leq b_n\}}\big)
        \lesssim n^{-1}b_n^p$
    by Proposition~\ref{prop:tau}\ref{prop:tau:p}.
    The assumptions of Theorem~\ref{thm:tightZ} imply that
    $b_n^p \gtrsim n$. Hence
    \begin{equation}
        \label{eq:glagla}
        \E |m_n|^p \lesssim n^{-1}b_n^p
        \; .
    \end{equation}

    Write $W_n''=M_n+B_n$ where
    \[
        M_n(t)=b_n^{-1}\sum_{j=0}^{\floor{nt}-1} m_n\circ F^j, \quad
        B_n(t)=b_n^{-1}\sum_{j=0}^{\floor{nt}-1} (\chi_n\circ F-\chi_n)\circ F^j
            =b_n^{-1}(\chi_n\circ F^{\floor{nt}}-\chi_n)
\;.
    \]
    Let
    \(
      M_n^-(t)
      = b_n^{-1} \sum_{j=1}^{\floor{nt}} m_n \circ F^{n-j}
      \;.
    \)
    Then $M_n^-$ is a martingale
    since $\E(m_n \mid F^{-1} \cB) = 0$.
    By~\cite[Theorem~2.1]{PisierXu88} and~\eqref{eq:glagla},
    \begin{equation}
        \label{eq:rtr}
        \E\|M_n\|_{p\var}^p
        = \E \|M_n^-\|_{p\var}^p
        \lesssim b_n^{-p} \sum_{j=1}^n \E |m_n \circ F^{n-j}|^p
        = nb_n^{-p} \E |m_n|^p
        \lesssim 1
        \;.
    \end{equation}
    Finally,
    $\|B_n\|_{p\var}^p
        \leq b_n^{-p}\, n\, (2\|\chi_n\|_\infty)^p\lesssim nb_n^{-p}\lesssim 1$
    for $p > \alpha$.
\end{proof}

\begin{rmk}
    For our purposes, it is sufficient to control the first moment $\E\|W_n''\|_{p\var}$.
    Hence we could have used the simpler result~\cite[Proposition~2]{Lepingle76} in place
    of the sharp result~\cite[Theorem~2.1]{PisierXu88}; this would give 
    $\sup_n\E\|W_n''\|_{p\var}^q<\infty$ for all $p > \alpha$ and $q < p$.
\end{rmk}

\begin{proof}[Proof of Theorem~\ref{thm:tightZ}]
Combine Propositions~\ref{prop:Wn'} and~\ref{prop:Wn''}.
\end{proof}

%\begin{rmk}
%    The proof of Theorem~\ref{thm:tightZ}
%    fails for $p=\alpha$. For instance, if $\mu_Z(\tau>n)=n^{-\alpha}$, then
%the estimate in~\eqref{eq:glagla} becomes that
%$\E|m_n|^\alpha \lesssim \log n$  and this extra logarithmic factor means that
%the expression in~\eqref{eq:rtr} is unbounded.
%\end{rmk}

\section{Inducing weak convergence and tightness in \texorpdfstring{$p$}{p}-variation}
\label{sec:induce}

A general principle in smooth ergodic theory is that limit laws for dynamical systems are often inherited from the corresponding laws for a suitable induced system~\cite{Gouezel07,KM16,MT04,MZ15,Ratner73}.
In this section, we show that this principle applies to weak convergence in
$D([0,1],\R^d)$ with the $\cSM_1$ topology and to tightness in $p$-variation.
The results hold in a purely probabilistic setting.

Let $Y$ be a measurable space and $f \colon Y \to Y$ a measurable transformation.
Suppose that $Z\subset Y$ is a measurable subset
with a measurable return time
$\tau \colon Z \to \{1,2,\ldots\}$, i.e.\ $f^{\tau(z)}(z) \in Z$ for each $z \in Z$.
(It is not assumed that $\tau$ is the first return time.)
Define the induced map
\[
    F \colon Z \to Z
    \;, \qquad
    Fz = f^{\tau(z)}(z)
    \;.
\]
Suppose that $\mu_Z$ is an ergodic $F$-invariant probability measure
and that $\bar\tau=\int_Z\tau\mrd\mu_Z < \infty$.

Define the tower $f_\Delta \colon \Delta\to\Delta$
\begin{equation} \label{eq:tower}
\Delta = \{ (z, \ell) : z \in Z, 0 \leq \ell < \tau(z) \}
    \;, \qquad
    f_\Delta(z, \ell) = \begin{cases}
        (z, \ell + 1), & \ell < \tau(z) - 1 \;, \\
        (Fz, 0), & \ell = \tau(z) - 1 \;,
    \end{cases}
\end{equation}
with ergodic $f_\Delta$-invariant probability measure
$\mu_\Delta=(\mu_Z \times {\rm counting}) / \bar{\tau}$.
The map $\pi \colon \Delta \to Y$, $\pi(z,\ell)=f^\ell z$ defines a
measurable semiconjugacy between $f_\Delta$ and $f$, so
$\mu=\pi_*\mu_\Delta$ is an ergodic $f$-invariant probability measure on $Y$.

It is convenient to identify $Z$ with $Z \times \{0\} \subset \Delta$.
Then on the tower, $\tau$ is the first return time to $Z$.

Let $v \colon Y \to \R^d$ be measurable
and define
the corresponding \emph{induced observable}
\begin{equation} \label{eq:V}
    V \colon Z \to \R^d
    \;, \qquad
    V(z) = \sum_{j=0}^{\tau(z)-1} v(f^jz)
    \; .
\end{equation}

Let $v_k = \sum_{j=0}^{k-1} v \circ f^j$.
To measure how well the excursion $\{v_k(z)\}_{0 \leq k \leq \tau(z)}$
approximates the straight and monotone path from $0$ to $V(z)$, we define 
$V^* \colon Z \to \R^d$,
\begin{equation} \label{eq:Vstar}
    V^*
    = \inf_{c \in \R^d, |c| = 1}
    \Bigl(
        \max_{0 \leq k \leq \ell \leq \tau} c \cdot \bigl(v_k - v_\ell\bigr)
        + \max_{0 \leq k \leq \tau} \bigl| v_k - (c \cdot v_k) c\bigr|
    \Bigr)
    \; .
\end{equation}
Note that $V^*(z) = 0$ if and only if there exist
$0 = s_0 \leq s_1 \leq \cdots \leq s_{\tau(z)} = 1$ such that
$v_k(z) = s_k V(z)$ for $0 \leq k \leq \tau(z)$.

Let $b_n$ be a sequence of positive numbers, bounded away from 0, and define
\begin{equation} \label{eq:WntWn}
    W_n(t) = b_n^{-1} \sum_{j=0}^{\floor{nt}-1} v \circ f^j
    \qquad \text{and} \qquad
    \tW_n(t) = b_n^{-1} \sum_{j=0}^{\floor{nt}-1} V \circ F^j
    \; .
\end{equation}

In this section, the notation $\to_\mu$ and $\to_{\mu_Z}$ is used to
denote weak convergence for random variables defined on the probability
spaces $(Y, \mu)$ and $(Z, \mu_Z)$ respectively.
We prove:

\begin{thm}
    \label{thm:MZ:Rd}
    Suppose that
        $\tW_n \to_{\mu_Z} \tW$ in the $\cSM_1$ topology
    for some random process $\tW$.
    Suppose further that
    \[
        b_n^{-1} \max_{k<n}  V^* \circ F^k
        \to_{\mu_Z} 0
        \; .
    \]
Then
        $W_n \to_{\mu} W$ in the $\cSM_1$ topology
    where $W(t) = \tW(t / \bar \tau)$.
\end{thm}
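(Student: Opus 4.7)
The plan is to reduce to almost-sure convergence via Skorokhod representation, work on the Young tower to transport between $Z$ and $Y$, and then couple $W_n$ to $\tW_n$ through an explicit random time change that inverts the return-time partial sums. By the Skorokhod representation theorem, I will assume on a common probability space that $\tW_n \to \tW$ a.s.\ in $\cSM_1$ and $b_n^{-1}\max_{k<n} V^*\circ F^k \to 0$ a.s. Birkhoff's ergodic theorem applied to $\tau \in L^1(\mu_Z)$ yields $\tau_N(z)/N \to \bar\tau$ a.s., equivalently $N_n(t) := \#\{N \ge 1 : \tau_N \le \floor{nt}\}$ satisfies $N_n(t)/n \to t/\bar\tau$ uniformly on $[0,1]$. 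Pulling back through the measure-preserving semiconjugacy $\pi \colon \Delta \to Y$, I will work on $(\Delta, \mu_\Delta)$, where $W_n^\Delta|_{(z,\ell)}(t) = b_n^{-1}\bigl(v_{\ell + \floor{nt}}(z) - v_\ell(z)\bigr)$. Since $\mu_\Delta(\ell \ge M) \to 0$ (using $\int \tau\mrd\mu_Z < \infty$) and on $\{\ell < M\}$ both a time shift of $\ell/n$ and a spatial shift of $v_\ell(z)/b_n$ are $\cSM_1$-negligible (after truncating $v$ if needed), the problem reduces to initial conditions $y = z \in Z$.

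For $z \in Z$, set $\sigma_N := \tau_N(z)/n$ and take $\gamma_n \colon [0,1] \to [0,1]$ to be the continuous, nondecreasing, piecewise-linear map with $\gamma_n(\sigma_N) = N/n$ for $0 \le N \le N_n(1)$, extended linearly on the final piece. Birkhoff gives $\gamma_n \to \gamma$ uniformly, where $\gamma(t) = t/\bar\tau$ is a continuous strictly increasing bijection onto $[0, 1/\bar\tau]$. Composition with such uniformly convergent continuous time changes preserves $\cSM_1$ convergence (see, e.g.,~\cite[Ch.~13]{Whitt02}), so $\tW_n \circ \gamma_n \to \tW \circ \gamma = W$ a.s.\ in $\cSM_1$. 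It therefore remains to show $\balpha_\infty(W_n, \tW_n \circ \gamma_n) \to 0$ a.s. By construction the two paths agree at the return times, since $W_n(\sigma_N) = b_n^{-1} v_{\tau_N}(z) = \tW_n(N/n) = (\tW_n \circ \gamma_n)(\sigma_N)$. On an excursion interval $[\sigma_{N-1}, \sigma_N]$, $\tW_n \circ \gamma_n$ is constant on the interior with a single jump of size $b_n^{-1}V(F^{N-1}z)$ at $\sigma_N$, whose $\cSM_1$-completion is realised by a straight-line insertion; meanwhile $W_n$ traces the translate of $\{b_n^{-1} v_k(F^{N-1}z)\}_{0 \le k \le \tau(F^{N-1}z)}$. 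The definition of $V^*$ furnishes a unit vector $c_N \in \R^d$ such that $k \mapsto c_N \cdot v_k(F^{N-1}z)$ is monotone up to an additive error $V^*(F^{N-1}z)$ and $v_k(F^{N-1}z)$ stays within $V^*(F^{N-1}z)$ of $\R c_N$, so the excursion lies in a tube of radius $b_n^{-1} V^*(F^{N-1}z)$ about the chord joining its endpoints and admits a monotone reparametrisation along $c_N$.

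Inserting this monotone reparametrisation into the $\delta$-completion $W_n^{\ell_d,\delta}$ alongside the straight-line insertion of $(\tW_n\circ\gamma_n)^{\ell_d,\delta}$ will yield, uniformly in small $\delta > 0$, a $\bsigma_\infty$-bound of order $b_n^{-1}\max_{k<n} V^* \circ F^k$; passing $\delta \to 0$ then gives the required $\balpha_\infty$-convergence, and combined with the previous step delivers $W_n \to W$ in $\cSM_1$. The hardest part will be this last step: assembling the per-excursion $V^*$ bounds into a single time-consistent matching of the two $\delta$-completed graphs simultaneously across all excursions $N = 1, \ldots, N_n(1)$, despite the direction $c_N$ selected by the infimum in $V^*(F^{N-1}z)$ being $N$-dependent. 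The monotonicity of $c_N \cdot v_k$ in $k$ is precisely what makes such a time-consistent parametrisation possible, and is what upgrades the natural Hausdorff-type control of excursion graphs into genuine $\cSM_1$-closeness; without this monotone ingredient one would be limited to weaker topologies such as weak $\cM_1$.
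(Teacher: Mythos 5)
Your strategy matches the paper's in all essential respects: the auxiliary process you call $\tW_n\circ\gamma_n$ is exactly the paper's $U_n(t)=b_n^{-1}\sum_{j<\floor{nt}}u\circ f^j$ restricted to $Z$, and the per-excursion $V^*$-comparison you sketch is the content of the paper's Proposition~\ref{prop:SM_1:bound} and Corollary~\ref{cor:UnWn}. Two differences and one gap are worth flagging.

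First, your per-excursion $\cSM_1$ bound ``of order $b_n^{-1}\max_{k<n}V^*\circ F^k$'' omits the \emph{temporal} component: $\cSM_1$ compares parametric representations of the completed graph, which includes a time coordinate, and over the excursion interval $[\sigma_{N-1},\sigma_N]$ the time discrepancy between $W_n$ and the piecewise-constant process can be as large as the interval length $\tau(F^{N-1}z)/n$. The correct bound is of order $\max_{k<n}\{\tau\circ F^k/n + V^*\circ F^k/b_n\}$ (Corollary~\ref{cor:UnWn}). The omitted term does tend to zero (by integrability of $\tau$ and Birkhoff), so the conclusion is unaffected, but you should include it.

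Second, be careful with the early Skorokhod representation. Skorokhod gives a.s.\ convergent copies of whichever finite (or countable) tuple of random elements you feed it, but it does \emph{not} carry along the whole dynamical structure of $(Z,\mu_Z)$. Your later steps need $\gamma_n$ (built from $\tau_N$), the excursion paths inside $W_n$, and $b_n^{-1}\max V^*\circ F^k$ all jointly coupled with $\tW_n$ on the new probability space. The clean way around this (and what the paper does implicitly) is to first prove the deterministic pathwise estimate
$d_{\cSM_1,[0,T]}(W_n,\tW_n\circ\gamma_n)\le 2\max_{k<n}\{\tau\circ F^k/n+V^*\circ F^k/b_n\}+b_n^{-1}E\circ f^{\floor{nT}}$
on $(Z,\mu_Z)$, where everything lives on the same space; this tends to $0$ in $\mu_Z$-probability under the stated hypotheses. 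Then $W_n$ and $\tW_n\circ\gamma_n$ have the same distributional limit. For the limit of $\tW_n\circ\gamma_n$ itself, since $\gamma_n\to\gamma$ in probability to a \emph{deterministic} time change, joint convergence $(\tW_n,\gamma_n)\to_w(\tW,\gamma)$ is automatic, and \emph{then} Skorokhod (applied to this pair) plus continuity of composition for strictly increasing continuous time changes gives $\tW_n\circ\gamma_n\to_w W$. This is exactly Lemma~\ref{lem:UntW}, quoted from~\cite[Lemma~3.4]{MZ15}; re-deriving it from scratch as you propose is fine, but the coupling bookkeeping must be done in this order.

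Third, your passage from $(Z,\mu_Z)$ to $(Y,\mu)$ via a truncation-plus-shift argument on the tower is a genuinely different route from the paper, which instead invokes Zweim{\"u}ller's strong distributional convergence (\cite[Theorem~1, Corollary~3]{Z07}) after verifying $d_{\cSJ_1}(W_n,W_n\circ f)\to_\mu 0$. Your argument can be made to work, but the Zweim{\"u}ller route is shorter, handles the end-of-interval boundary behaviour automatically, and additionally yields convergence on $(Y,\PP)$ for any $\PP\ll\nu$ rather than just $\PP=\mu$ (used in the paper's Theorem~\ref{thm:Y}). That said, your direct argument buys a more hands-on picture of where the $\cSM_1$-negligibility comes from, which may be pedagogically clearer.

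Finally, the ``hardest part'' you flag -- assembling the per-excursion bounds into a single time-consistent $\cSM_1$-close parametrisation -- is handled in the paper by the observation that if two c{\`a}dl{\`a}g paths agree at a finite set of times $0=T_0<\cdots<T_k$, then $d_{\cSM_1,[0,T_k]}\le\max_j d_{\cSM_1,[T_j,T_{j+1}]}$: one simply concatenates parametric representations of the completed graphs. The $N$-dependence of the direction $c_N$ you worry about is therefore not a difficulty, because the concatenation is done at the excursion endpoints where both paths agree; each excursion is handled independently. The monotonicity of $c_N\cdot v_k$ is still essential, but only locally within each excursion (as in Proposition~\ref{prop:SM_1:bound}), exactly as your intuition suggests.
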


\begin{thm} \label{thm:tight:induce}
    Suppose that $\tau$ is regularly varying with index $\alpha>1$ on $(Z, \mu_Z)$, and that
    $b_n$ satisfies $\lim_{n\to\infty} n\mu_Z(\tau>b_n)=1$.
    Let $v\in L^\infty$.
    Suppose that the family of random variables $\|\tW_n\|_{p\var}$ is tight on $(Z,\mu_Z)$ for some $p>\alpha$.
    Then the family $\|W_n\|_{p\var}$ is tight on $(Y, \mu)$.
\end{thm}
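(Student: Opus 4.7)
The plan is to prove $\mu$-tightness of $\|W_n\|_{p\var}$ by reducing, via Kac's formula, to $\mu_Z$-tightness on the induced system, and then bounding each excursion by its $1$-variation. Letting $\Psi_n(y)$ denote the $p$-variation of $W_n$ when $W_n$ is started at $y\in Y$, I would write $y=\pi(z,\ell)=f^\ell z$ with $z\in Z$ and $0\le\ell<\tau(z)$.  The orbit of $y$ under $f$ first completes the current excursion (of length $\tau(z)-\ell$) and then continues as the orbit of $Fz\in Z$. Since the initial partial excursion contributes at most $\tau(z)\|v\|_\infty/b_n$ to the $p$-variation (as $p$-variation is dominated by $1$-variation),
\[
    \Psi_n(y)\le \tau(z)\|v\|_\infty/b_n + \Psi_n(Fz)\;.
\]
Combining with Kac's formula $\int_Y\phi\mrd\mu=\bar\tau^{-1}\int_Z\sum_{\ell=0}^{\tau(z)-1}\phi(f^\ell z)\mrd\mu_Z$ and the union bound $1_{A+B>K}\le 1_{A>K/2}+1_{B>K/2}$,
\[
    \mu(\Psi_n>K)\le \bar\tau^{-1}\int_Z\tau(z)\bigl[1_{\{\tau(z)\|v\|_\infty>Kb_n/2\}}+1_{\{\Psi_n(Fz)>K/2\}}\bigr]\mrd\mu_Z\;.
\]

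I would then make both integrals uniformly small in $n$ for large $K$. The first is bounded by $\bar\tau^{-1}\E_{\mu_Z}[\tau 1_{\{\tau>Kb_n/(2\|v\|_\infty)\}}]$; since $b_n\to\infty$ and $\tau$ is integrable, this tail tends to $0$ uniformly in $n$ as $K\to\infty$. The second integral carries a $\tau$-weight, which is what prevents a direct absolute-continuity argument; I would handle it by splitting $\tau=\tau 1_{\{\tau\le M\}}+\tau 1_{\{\tau>M\}}$. The truncated piece is at most $(M/\bar\tau)\mu_Z(\Psi_n>K/2)$ by $F$-invariance of $\mu_Z$, and the untruncated piece is at most $\bar\tau^{-1}\E_{\mu_Z}[\tau 1_{\{\tau>M\}}]$, which vanishes as $M\to\infty$. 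So the whole reduction hinges on $\mu_Z$-tightness of $\Psi_n$.

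For $\mu_Z$-tightness of $\Psi_n$, I would use sub-additivity of $p$-th powers of $p$-variation across the return-time partition: with $\tau_k(z)=\sum_{j<k}\tau\circ F^j$,
\[
    \Psi_n^p = \|W_n\|_{p\var;[0,1]}^p \le \sum_{k\ge 0}\bigl\|W_n|_{[\tau_k/n,\,(\tau_{k+1}\wedge n)/n]}\bigr\|_{p\var}^p\;.
\]
On each excursion $p$-variation is dominated by $1$-variation, and hence by $\tau(F^kz)\|v\|_\infty/b_n$. Summing gives $\Psi_n^p\le \|v\|_\infty^p\,b_n^{-p}\sum_{k=0}^{n-1}\tau^p\circ F^k$. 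The remark following Proposition~\ref{prop:tau} notes that the proposition uses only regular variation of $\tau$ and $F$-invariance of $\mu_Z$, so part~\ref{prop:tau:sum} applies here and yields $\E_{\mu_Z}\bigl(\sum_{k=0}^{n-1}\tau^p\circ F^k\bigr)^{1/p}=O(b_n)$. Markov's inequality then gives $\mu_Z$-tightness of $\Psi_n$, closing the argument.

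The main obstacle is the transfer step: the marginal density of $z$ under $\mu_\Delta$ with respect to $\mu_Z$ equals $\tau/\bar\tau$, unbounded when $\alpha<2$, so $\mu_Z$-tightness does not pass to $\mu$-tightness by absolute continuity alone. The $\tau\le M$ vs.\ $\tau>M$ truncation resolves this by trading a blow-up factor $M$ against the $\mu_Z$-tightness obtained in the last step, with the remainder controlled by integrability of $\tau$. I note in passing that the hypothesis of tightness of $\|\tW_n\|_{p\var}$ is not strictly needed in this plan (it is implied by $v\in L^\infty$ and Proposition~\ref{prop:tau}\ref{prop:tau:sum} via $|V|\le\|v\|_\infty\tau$); an alternative route would replace the crude per-excursion $1$-variation bound by $|V(F^kz)|/b_n$ plus an excess of order $\tau(F^kz)\|v\|_\infty/b_n$ and Minkowski-sum to $\|\tW_n\|_{p\var}$ plus a Proposition~\ref{prop:tau}\ref{prop:tau:sum} remainder.
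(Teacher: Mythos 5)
Your Kac-formula transfer from $\mu_Z$-tightness to $\mu$-tightness is correct and is a genuinely different (and more self-contained) route than the paper's, which instead invokes Zweim\"uller's strong distributional convergence in Lemma~\ref{lem:Zwei}. The decomposition $\Psi_n(y)\le\tau(z)\|v\|_\infty/b_n+\Psi_n(Fz)$, the union bound, and the truncation $\tau=\tau 1_{\{\tau\le M\}}+\tau1_{\{\tau>M\}}$ combined with $F$-invariance and integrability of $\tau$ all check out.

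However, the step where you establish $\mu_Z$-tightness of $\Psi_n$ rests on a false inequality. You claim ``sub-additivity of $p$-th powers of $p$-variation across the return-time partition,''
\[
\|W_n\|_{p\var;[0,1]}^p \le \sum_{k} \bigl\|W_n|_{[\tau_k/n,(\tau_{k+1}\wedge n)/n]}\bigr\|_{p\var}^p\;,
\]
but the $p$-th power of $p$-variation is \emph{super}-additive over concatenated intervals: refining a partition by inserting a point can only decrease $\sum|W(t_{i+1})-W(t_i)|^p$ when $p\ge1$, so the supremum over partitions of $[0,1]$ dominates the sum of the subinterval suprema, not the reverse. Concretely, for a step function with two unit jumps in the same direction and $p=2$, the left side is $4$ while the right side is $2$. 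What does hold is the triangle inequality $\|W\|_{p\var;[a,c]}\le\|W\|_{p\var;[a,b]}+\|W\|_{p\var;[b,c]}$, but iterating that over excursions and bounding each by its $1$-variation only gives $\Psi_n\lesssim b_n^{-1}\sum_k\tau\circ F^k$, which diverges. The same error underlies your aside that the tightness hypothesis on $\|\tW_n\|_{p\var}$ is superfluous: the ``implied'' bound $\|\tW_n\|_{p\var}^p\le b_n^{-p}\sum_k|V\circ F^k|^p$ is the same false sub-additivity applied to $\tW_n$, and in the Gibbs--Markov setting the paper needs the full martingale argument of Theorem~\ref{thm:tightZ} to control $\|\tW_n\|_{p\var}$.

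The fix is exactly what you sketch as an ``alternative route,'' but it is not optional. As in Lemma~\ref{lem:WZ:p}, one decomposes $W_n$ (extended or time-changed so that the sum runs over a whole number of excursions) into a piecewise-constant part, whose $p$-variation equals $\|\tW_n\|_{p\var}$ and is tight by hypothesis, plus a remainder that \emph{vanishes at excursion boundaries}; one then applies the triangle inequality for $\|\cdot\|_{p\var}$ (not for its $p$-th power), and bounds the remainder via Proposition~\ref{prop:tau}\ref{prop:tau:sum}. The vanishing of the remainder at the excursion endpoints is the structural fact that makes the cross-excursion increments controllable; bounding $W_n$ directly by per-excursion $1$-variation discards this.
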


\begin{rmk}
    The assumptions of Theorem~\ref{thm:tight:induce} on $\tau$ can be relaxed. If
    $\tau' \colon Z \to \{1,2,\ldots\}$ is regularly varying with index $\alpha > 1$ on $(Z, \mu_Z)$
    and $b_n$ satisfies $\lim_{n\to\infty} n\mu_Z(\tau' > b_n)=1$,
    then the result holds for all $\tau \leq \tau'$.
\end{rmk}

\subsection{Inducing convergence in \texorpdfstring{$\cSM_1$}{SM1} topology}

In this subsection, we prove Theorem~\ref{thm:MZ:Rd}.
Our proof closely follows
the analogous proof in~\cite{MZ15}, with the difference that we
work in $\R^d$ instead of $\R$. 
%As in~\cite{MZ15}, we give the proof in the relatively abstract setup of measure preserving transformations, with minimal assumptions. 

Since $\pi:\Delta\to Y$ is a measure-preserving semiconjugacy, we may suppose without loss of generality that $Y = \Delta$ and $f=f_\Delta$ as in~\eqref{eq:tower}.
In particular, we may
suppose that $\tau$ is the first return time.

Define
\[
    u \colon Y \to \R^d
    \;, \qquad 
    u(y) = \begin{cases}
        V(z) \;, & y=(z,\tau(z)-1) \; , \\
        0\;, & \text{otherwise} \; .
    \end{cases}
\]
Let
\[
    U_n(t) = b_n^{-1} \sum_{j=0}^{\floor{nt}-1} u \circ f^j
    \; .
\]
Thus defined, the restriction of $U_n$ to $Z$ corresponds to
$U_n$ in \cite{MZ15}.

\begin{lemma}
    \label{lem:UntW}
    $U_n \to_{\mu_Z} W$ in the $\cSM_1$ topology.
\end{lemma}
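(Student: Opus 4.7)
The plan is to represent $U_n$ as a time-changed version of $\tW_n$ and then deduce convergence from the hypothesis $\tW_n \to_{\mu_Z} \tW$ combined with uniform convergence of the time change to a deterministic continuous function. For $z \in Z$, let $\tau_k(z) = \sum_{j=0}^{k-1} \tau(F^j z)$ with $\tau_0 = 0$ denote the $k$-th return time of $z$ to $Z$, and set $K_n(t)(z) = \max\{k \geq 0 : \tau_k(z) \leq \floor{nt}\}$. By definition of $u$, for $z \in Z$ the observable $u(f^j z)$ vanishes unless $j+1 = \tau_k(z)$ for some $k \geq 1$, in which case $u(f^{\tau_k(z)-1}z) = V(F^{k-1}z)$. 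Summing yields the exact identity
\[
    U_n(t)(z) = b_n^{-1} \sum_{k=0}^{K_n(t)(z) - 1} V(F^k z) = \tW_n\bigl(K_n(t)(z)/n\bigr)(z)
    \;,
\]
so that $U_n = \tW_n \circ \lambda_n$ with $\lambda_n(t) = K_n(t)/n$.

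Next, by Birkhoff's ergodic theorem applied to $\tau\in L^1(\mu_Z)$ on the ergodic system $(Z, F, \mu_Z)$, we have $\tau_k/k \to \bar\tau$ $\mu_Z$-a.s.\ as $k \to \infty$. From the sandwich $\tau_{K_n(t)} \leq \floor{nt} < \tau_{K_n(t)+1}$, dividing by $K_n(t)$ and using that $K_n(t) \to \infty$ for every $t > 0$, we obtain $\lambda_n(t) \to t/\bar\tau$ $\mu_Z$-a.s.\ for each fixed $t \in [0,1]$. Monotonicity of $t \mapsto \lambda_n(t)$ combined with continuity of the limit $\lambda(t) := t/\bar\tau$ upgrades this pointwise convergence to uniform convergence on $[0,1]$ by a standard Dini-type argument, so that $\|\lambda_n - \lambda\|_\infty \to 0$ $\mu_Z$-a.s.

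Finally, since $\lambda_n$ converges in probability to the deterministic continuous strictly increasing function $\lambda$, Slutsky's theorem yields joint weak convergence $(\tW_n, \lambda_n) \to_{\mu_Z} (\tW, \lambda)$ in $(D([0,1],\R^d), \cSM_1) \times (C([0,1], [0,1]), \|\cdot\|_\infty)$. The composition map $(X, \mu) \mapsto X \circ \mu$ into $(D, \cSM_1)$ is continuous at $(X, \mu)$ whenever $\mu$ is continuous and strictly increasing (see, e.g., \cite[Theorem~13.2.4]{Whitt02}). Applying the continuous mapping theorem gives $U_n = \tW_n \circ \lambda_n \to_{\mu_Z} \tW \circ \lambda = W$ in $\cSM_1$, as claimed. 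The principal technical point is the continuity of composition in the $\cSM_1$ topology, which holds precisely because $\lambda$ is a deterministic, continuous, strictly increasing time change; the analogous statement would fail for $\cSJ_1$ or for a more singular $\lambda$, so the strict monotonicity coming from $\bar\tau < \infty$ and the deterministic nature of the limit are essential.
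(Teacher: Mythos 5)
Your proof is correct and is, in essence, the same time-change argument that the paper defers to via \cite[Lemma~3.4]{MZ15}: represent $U_n$ exactly as the composition $\tW_n \circ \lambda_n$ with the normalized lap-number process $\lambda_n(t) = K_n(t)/n$, use Birkhoff's ergodic theorem together with monotonicity to get $\lambda_n \to t/\bar\tau$ uniformly a.s., and conclude by continuity of composition at a deterministic continuous strictly increasing time change. One small inaccuracy in the closing remark: Whitt's composition theorems hold equally for $\cSJ_1$ when the limiting inner function is deterministic, continuous, and strictly increasing, so the restriction to $\cSM_1$ in the statement reflects the $\cSM_1$ hypothesis on $\tW_n$ rather than a failure of $\cSJ_1$ composition continuity.
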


\begin{proof}
For the case $d=1$, see \cite[Lemma~3.4]{MZ15}.
    The proof 
    for all $d \geq 1$ goes through unchanged.
\end{proof}

Next we control \emph{excursions:} we estimate the distance between $U_n$ and $W_n$
in the $\cSM_1$ topology.

\begin{prop}
    \label{prop:SM_1:bound}
    Let $w \in D([T_0,T_1], \R^d)$ and define
    $\phi \colon [T_0,T_1]\to \R^d$ to be the linear path with
    $\phi(T_0) = w(T_0)$ and $\phi(T_1) = w(T_1)$.
    Then for each $c \in \R^d$ with $|c| = 1$,
    \[
        d_{\cSM_1}(w, \phi)
        \leq T_1 - T_0 + 
        2 \sup_{T_0 \leq s < t \leq T_1} c \cdot w(t,s)
        + 2 \sup_{T_0 \leq t \leq T_1} \bigl| w(T_0, t) - (c \cdot w(T_0, t)) c\bigr|
        \; ,
    \]
    where $w(a,b) = w(b) - w(a)$.
\end{prop}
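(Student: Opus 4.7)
The bound will follow from constructing explicit continuous parametrizations $(\lambda_w, \gamma_w) \in \Lambda^*(w)$ and $(\lambda_\phi, \gamma_\phi) \in \Lambda^*(\phi)$ satisfying $\|\lambda_w - \lambda_\phi\|_\infty \leq T_1 - T_0$ and $\|\gamma_w - \gamma_\phi\|_\infty \leq 2 D_g + 2 E_\perp$, where $D_g := \sup_{s<t} c \cdot w(t,s)$ measures the $c$-component drops of $w$ and $E_\perp := \sup_t |w(T_0, t) - (c \cdot w(T_0, t)) c|$ the perpendicular deviation. The first estimate is immediate since both $\lambda_w(s), \lambda_\phi(s) \in [T_0, T_1]$; the content is in the second, which will be handled by decomposing $\gamma_w - \gamma_\phi$ into its components along and perpendicular to $c$.

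First I would parametrize $\Gamma(w)$ in the natural way: assign to each jump time $t_k$ of $w$ a positive weight $r_k$ with $\sum r_k < \infty$, insert fictitious parameter intervals of length $r_k$ at each $t_k$ during which $\lambda_w$ stays constant and $\gamma_w$ traces the segment $[w(t_k-), w(t_k)]$ linearly, then rescale to $[0,1]$. Next I would construct a monotone approximation $\hat g\colon [T_0, T_1] \to \R$ of $g(t) := c \cdot w(t)$ with $\hat g(T_0) = g(T_0)$, $\hat g(T_1) = g(T_1)$, and $\|\hat g - g\|_\infty \leq 2 D_g$. When $g(T_1) \geq g(T_0)$, the choice $\hat g(t) = \min(g(T_1), \max_{r \leq t} g(r))$ is non-decreasing with $\|\hat g - g\|_\infty \leq D_g$ (by a case analysis using the drop condition $\sup_{s<t}(g(s) - g(t)) \leq D_g$). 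When $g(T_1) < g(T_0)$, the drop condition gives $g(T_0) - g(T_1) \leq D_g$, forcing $g$ to remain within a window of width $\leq 2 D_g$ around $g(T_0)$; one can then take $\hat g \equiv g(T_0)$ on $[T_0, T_1)$ and $\hat g(T_1) = g(T_1)$.

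Then $\lambda_\phi$ will be defined by requiring $c \cdot \gamma_\phi(s) = \hat g(\lambda_w(s))$ outside jump intervals of $\lambda_w$; since $c \cdot \phi$ is linear on $[T_0, T_1]$, this uniquely determines $\lambda_\phi$ (after a harmless perturbation of $\hat g$ by $\varepsilon(t - T_0)$ to enforce strict monotonicity, required for the continuous path $\phi$). Within each jump interval at $t_k$, I would match the interpolations so that $c \cdot \gamma_w(s) = (1-\theta) g(t_k-) + \theta g(t_k)$ and $c \cdot \gamma_\phi(s) = (1-\theta) \hat g(t_k-) + \theta \hat g(t_k)$ with the same $\theta(s) \in [0,1]$. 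The bound $|c \cdot (\gamma_w - \gamma_\phi)(s)| \leq 2 D_g$ then follows from $\|\hat g - g\|_\infty \leq 2 D_g$, using convexity inside jump intervals. For the perpendicular component, both $\gamma_w^\perp(s)$ and $\gamma_\phi^\perp(s)$ stay within $E_\perp$ of $w^\perp(T_0) := w(T_0) - (c \cdot w(T_0))c$: for $\gamma_w^\perp$ by definition of $E_\perp$ combined with convexity across jumps, and for $\gamma_\phi^\perp$ because $\gamma_\phi(s) = w(T_0) + \frac{\lambda_\phi(s) - T_0}{T_1 - T_0}(w(T_1) - w(T_0))$ with $|w^\perp(T_1) - w^\perp(T_0)| \leq E_\perp$. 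Hence $|\gamma_w^\perp - \gamma_\phi^\perp| \leq 2 E_\perp$. Combining via $|x| \leq |c \cdot x| + |x - (c \cdot x) c|$ yields $\|\gamma_w - \gamma_\phi\|_\infty \leq 2 D_g + 2 E_\perp$.

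The main obstacle will be the case $g(T_1) < g(T_0)$: here the segment $c\cdot\phi$ spans only the short interval $[g(T_1), g(T_0)]$ of length $\leq D_g$ while the trace of $g$ may wander throughout a window of width $2 D_g$ around $g(T_0)$, which is precisely what forces the factor $2$ in the stated bound and precludes the simpler ``running maximum'' choice of $\hat g$ used when $g(T_1) \geq g(T_0)$. A secondary technicality is accommodating possibly infinitely many jumps of $w$ in the parametrization, resolved by the standard convergent-sum device $\sum r_k < \infty$ and the freedom to let the weights $r_k$ be arbitrarily small.
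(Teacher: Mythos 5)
Your proposal attacks the bound by directly constructing matching parametrisations $(\lambda_w,\gamma_w)\in\Lambda^*(w)$ and $(\lambda_\phi,\gamma_\phi)\in\Lambda^*(\phi)$; the paper instead goes through two intermediate paths and the triangle inequality. Concretely, the paper normalises $w(T_0)=0$, sets $\chi(t)=\sup_{s\le t}c\cdot w(s)$ and $\psi=\chi c$, shows $\sup_t|w(t)-\psi(t)|\le D_g+E_\perp$, introduces the linear path $\xi$ with the same endpoints as $\psi$, uses that $\psi$ and $\xi$ are both monotone along $c$ so that $d_{\cSM_1}(\psi,\xi)\le T_1-T_0$ (they are reparametrisations of each other up to linear jumps), and finally bounds $\sup_t|\xi(t)-\phi(t)|\le D_g+E_\perp$. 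Both approaches share the along-$c$/perpendicular split and a running-max construction, but the paper never has to exhibit an explicit continuous parametrisation of $\Gamma(\phi)$ tied to that of $\Gamma(w)$, which is precisely where the difficulty sits.

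Your direct construction has a genuine gap in the case $g(T_1)<g(T_0)$. You take $\hat g\equiv g(T_0)$ on $[T_0,T_1)$ and $\hat g(T_1)=g(T_1)$, so $\hat g$ jumps at $T_1$, and you then want $\lambda_\phi$ determined by $c\cdot\phi(\lambda_\phi(s))=\hat g(\lambda_w(s))$ outside fictitious intervals. This scheme requires the jump set of $\hat g$ to be contained in the jump set of $w$, so that the discontinuities of $\hat g\circ\lambda_w$ fall inside flat intervals of $\lambda_w$ and can be absorbed into the fictitious time. The running-max choice $\min(g(T_1),\max_{r\le t}g(r))$ has this property, but the constant-then-jump choice does not: if $w$ is continuous at $T_1$, then $\hat g\circ\lambda_w$ jumps at $s=1$ and the resulting $\lambda_\phi$ is discontinuous, so $(\lambda_\phi,\gamma_\phi)\notin\Lambda^*(\phi)$. (A fix in this case is to take $\lambda_\phi=\lambda_w$ outright and use $|g(t)-g(T_0)|\le D_g$ together with $c\cdot\phi(t)\in[g(T_1),g(T_0)]\subset[g(T_0)-D_g,g(T_0)]$ to get $|c\cdot(w-\phi)(t)|\le 2D_g$ pointwise, but this is not in your write-up.) The degenerate case $g(T_0)=g(T_1)$ is also not cleanly handled: perturbing $\hat g$ by $\eps(t-T_0)$ changes $\hat g(T_1)$ away from $g(T_1)$, so the constraint $c\cdot\phi(\lambda_\phi(1))=\hat g(T_1)$ no longer forces $\lambda_\phi(1)=T_1$ and the parametrisation fails to be surjective onto $\Gamma(\phi)$. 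The paper's triangle-inequality route avoids these continuity and case-splitting issues entirely.
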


\begin{proof}
    Without loss of generality, we suppose that $w(T_0) = 0$.
    Define $\chi \colon [T_0, T_1] \to [0,\infty)$ and
    $\psi \colon [T_0, T_1]\to \R^d$ to be 
    $\chi(t) = \sup_{s \leq t} c \cdot w(s)$ and
$\psi(t) = \chi(t) c$.
    Then $\psi$ is a monotone path in the direction of $c$.

    Observe that
    \(
        |w(t) - \psi(t)|
        \leq \chi(t) - c \cdot w(t) + |w(t) - (c \cdot w(t)) c|
    \).
    Hence
    \begin{equation}
        \label{eq:w-psi}
        \sup_t|w(t)-\psi(t)|
        \leq \sup_{s < t} c \cdot w(t,s)
        + \sup_{t} | w(t) - (c \cdot w(t)) c |
        \; .
    \end{equation}

    Further, let $\xi \colon [T_0, T_1]\to \R^d$ be the linear path
    with
    $\xi(T_0) = w(T_0) = 0$ and $\xi(T_1) = \psi(T_1) = \chi(T_1) c$.
    Since $\xi$ is a reparametrisation of $\psi$ (up to linear jumps),
    \begin{equation}
        \label{eq:psi-xi}
        d_{\cSM_1}(\xi, \psi) \leq T_1 - T_0
        \; .
    \end{equation}
    Also, for each $\eps > 0$ there is $s \in [T_0, T_1]$ such that
    $|\chi(T_1) - c \cdot w(s)| \leq \eps$. Then
    \begin{equation}
        \label{eq:xi-phi}
        \begin{aligned}
            \sup_t|\phi(t) - \xi(t)|
            & = |\phi(T_1) - \xi(T_1)|
            \leq |w(T_1) - (c \cdot w(s)) c| + \eps
         \\ & \leq |w(T_1) - (c \cdot w(T_1)) c |
            + c \cdot (w(s) - w(T_1))
            + \eps
            \; .
        \end{aligned}
    \end{equation}
    The result follows from~\eqref{eq:w-psi},~\eqref{eq:psi-xi},~\eqref{eq:xi-phi}
    and that $\eps$ can be taken arbitrarily small.
\end{proof}

For $s \leq t$, let $d_{\cSM_1, [s,t]}$ denote the distance on $[s,t]$.
Let $\tau_k = \sum_{j=0}^{k-1} \tau \circ F$.

\begin{cor}
    \label{cor:UnWn}
    For each $n$ and $k$, on $Z$,
    \[
        d_{\cSM_1, [0, \tau_k / n]} (U_n, W_n)
        \leq 2 \max_{0\le j<k} \Bigl\{
            \frac{\tau \circ F^j}{n} + \frac{V^* \circ F^j}{b_n} 
        \Bigr\}
        \; .
    \]
\end{cor}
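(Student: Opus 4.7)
The plan is to decompose the interval $[0, \tau_k/n]$ into the $k$ excursions $I_j = [\tau_{j-1}/n, \tau_j/n]$ for $j = 1, \ldots, k$, estimate the $\cSM_1$-distance between $U_n$ and $W_n$ on each $I_j$ separately by means of Proposition~\ref{prop:SM_1:bound}, and then concatenate these sub-interval bounds. The key observation enabling the strategy is that $U_n$ and $W_n$ agree at the endpoints $\tau_j/n$, both taking the value $b_n^{-1}\sum_{i=0}^{j-1} V \circ F^i$. Hence on each $I_j$ the two paths run between identical start and end points, and we may introduce the common straight-line interpolation $\phi_j$ between these points and compare each of $U_n|_{I_j}$ and $W_n|_{I_j}$ to $\phi_j$ via the proposition, then invoke the triangle inequality.

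For $w = W_n|_{I_j}$, substituting into Proposition~\ref{prop:SM_1:bound} and changing index via $a = \floor{ns} - \tau_{j-1}$, $b = \floor{nt} - \tau_{j-1}$, expresses the two supremum terms as $b_n^{-1}$ times the quantities appearing in the definition~\eqref{eq:Vstar} of $V^*(F^{j-1}z)$. Taking the infimum over unit vectors $c$ of the sum of these two terms gives exactly $V^*(F^{j-1}z)/b_n$, so the proposition delivers
\[
    d_{\cSM_1, I_j}(W_n, \phi_j) \le \tau(F^{j-1}z)/n + 2 V^*(F^{j-1}z)/b_n.
\]
For $w = U_n|_{I_j}$, which is a step function with a single jump of size $V\circ F^{j-1}(z)$ at $\tau_j/n$, I would apply the proposition with $c$ antiparallel to that jump; this choice annihilates both supremum terms (the first by sign, the second because the jump is aligned with the $c$-direction), leaving only the trivial time contribution $\tau(F^{j-1}z)/n$. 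The triangle inequality then yields a bound $2\tau(F^{j-1}z)/n + 2V^*(F^{j-1}z)/b_n$ on each $I_j$.

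Finally, the sub-interval estimates must be glued into a single bound over $[0, \tau_k/n]$. The optimising parametrisations $(\lambda_i^{(j)}, \gamma_i^{(j)})$ on each $I_j$ can be chosen so that their time components match at the boundary points $\tau_j/n$, and because $U_n$ and $W_n$ coincide at these points the corresponding space components also match. Concatenating these per-interval parametrisations then produces continuous bijections onto the full completed graphs over $[0, \tau_k/n]$, and the resulting differences $\|\lambda_1-\lambda_2\|_\infty$ and $\|\gamma_1-\gamma_2\|_\infty$ each equal the maxima of the corresponding per-interval quantities, which gives the claimed bound. The main care in the argument lies in this gluing step: one must verify that the concatenated $(\lambda,\gamma)$ pairs are genuine elements of $\Lambda^*$ and that the join respects the completed-graph structure, which is possible because the restrictions of each completed graph to distinct $I_j$ are disjoint apart from the isolated common endpoints.
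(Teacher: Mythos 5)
Your argument is correct and follows essentially the paper's own proof: partition $[0,\tau_k/n]$ into excursion intervals on each of which $U_n$ and $W_n$ share endpoints, compare each to the common linear interpolant via Proposition~\ref{prop:SM_1:bound} (with the infimum over $c$ recovering $V^*$ for $W_n$ and an antiparallel $c$ killing both supremum terms for $U_n$), combine by the triangle inequality, and take the maximum over sub-intervals. You merely make explicit two points the paper leaves implicit — the choice of $c$ in bounding $d_{\cSM_1}(U_n,\phi)$ and the gluing of the per-interval parametrisations into global elements of $\Lambda^*$ — both of which you handle correctly.
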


\begin{proof}
    Denote $T_j = \tau_j / n$. Since we restrict to $Z$, each interval $[T_j, T_{j+1}]$,
    including with $j=0$, corresponds to a complete excursion with
    $U_n(T_j) = W_n(T_j)$ and $U_n(T_{j+1}) = W_n(T_{j+1})$.
    Fix $j$ and let $\phi \colon [T_j, T_{j+1}]\to \R^d$ be the linear path such that
    $\phi(T_j) = U_n(T_j)$ and $\phi(T_{j+1}) = U_n(T_{j+1})$.
    Recall that $U_n$ is constant on $[T_j, T_{j+1})$.
    By Proposition~\ref{prop:SM_1:bound},
    \begin{align*}
        d_{\cSM_1, [T_j, T_{j+1}]}(U_n, \phi)
        & \leq T_{j+1} - T_j
        \; , \\
        d_{\cSM_1, [T_j, T_{j+1}]}(W_n, \phi)
        & \leq T_{j+1} - T_j + \frac{2}{b_n} V^* \circ F^j
        \; .
    \end{align*}
    Hence
    \[
        d_{\cSM_1, [T_j, T_{j+1}]} (U_n, W_n)
        \leq 2(T_{j+1} - T_j) + \frac{2}{b_n} V^* \circ F^j
        = \frac{2}{n}\tau\circ F^j + \frac{2}{b_n} V^* \circ F^j
        \; .
    \]
    Finally, 
    \[
        d_{\cSM_1, [0,T_k]} (U_n, W_n)
        \leq \max_{j < k} d_{\cSM_1, [T_j, T_{j+1}]}(U_n, W_n)
        \; ,
    \]
and the result follows.
\end{proof}

\begin{lemma}
    \label{lem:UnWnT}
    $d_{\cSM_1, [0, T]}(U_n, W_n) \to_{\mu_Z} 0$
    for all $T > 0$.
\end{lemma}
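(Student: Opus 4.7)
Fix $T>0$ and $\eps>0$. The goal is to show $\mu_Z(d_{\cSM_1,[0,T]}(U_n,W_n)>\eps)\to 0$. The only quantitative tool in hand is Corollary~\ref{cor:UnWn}, which controls $d_{\cSM_1,[0,\tau_k/n]}(U_n,W_n)$ on the random grid $\{\tau_k/n\}$; the plan is to pick $k=k_n$ so that $\tau_{k_n}/n$ just exceeds $T$ with probability tending to $1$, and then show that the corresponding right-hand side of Corollary~\ref{cor:UnWn} tends to $0$ in probability.

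First I would apply the Birkhoff ergodic theorem to $\tau\in L^1(Z,\mu_Z)$ under $F$: almost surely $\tau_k/k\to\bar\tau$ as $k\to\infty$. Set $k_n=\lceil 2Tn/\bar\tau\rceil$. Then on the event $E_n=\{\tau_{k_n}\ge nT\}$ we have $\mu_Z(E_n^c)\to 0$, and on $E_n$ the interval $[0,T]$ is contained in the interval $[0,\tau_{k_n}/n]$ on which Corollary~\ref{cor:UnWn} operates. A standard reparametrisation argument for the $\cSM_1$ metric (truncate a near-optimal pair $(\lambda_i,\gamma_i)\in\Lambda^*(\cdot)$ realising the distance on $[0,\tau_{k_n}/n]$ to a matching pair on $[0,T]$) gives
\[
d_{\cSM_1,[0,T]}(U_n,W_n)\le d_{\cSM_1,[0,\tau_{k_n}/n]}(U_n,W_n)
\quad\text{on }E_n.
\]

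Combining with Corollary~\ref{cor:UnWn}, it remains to prove
\[
\frac{1}{n}\max_{j<k_n}\tau\circ F^j\to_{\mu_Z}0
\qquad\text{and}\qquad
\frac{1}{b_n}\max_{j<k_n}V^*\circ F^j\to_{\mu_Z}0.
\]
For the first, by $F$-invariance of $\mu_Z$ and a union bound,
\[
\mu_Z\Bigl(\max_{j<k_n}\tau\circ F^j\ge\delta n\Bigr)\le k_n\,\mu_Z(\tau\ge\delta n)=O(n\mu_Z(\tau\ge\delta n)),
\]
which tends to $0$ because $\tau\in L^1(\mu_Z)$. For the second, the hypothesis of Theorem~\ref{thm:MZ:Rd} states exactly that $b_n^{-1}\max_{j<n}V^*\circ F^j\to_{\mu_Z}0$; applied along the subsequence $k_n\sim(2T/\bar\tau)n$ (using that $b_{k_n}/b_n$ stays bounded, which holds in our applications where $b_n$ is regularly varying) this yields the required convergence.

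The one step that needs the most care is the reduction from the interval $[0,\tau_{k_n}/n]$ to the deterministic interval $[0,T]$: the $\cSM_1$ metric is not straightforwardly monotone under restriction because the graph reparametrisations $\lambda$ mix the time axis with the spatial one. I would handle this by working directly with the completed graphs $\Gamma(U_n)$ and $\Gamma(W_n)$, noting that any $\eps$-close pair of parametrisations of these graphs on $[0,\tau_{k_n}/n]$ can be restricted to the sub-graphs over $[0,T]$ with only a harmless relabelling, using that $U_n$ and $W_n$ agree at the endpoints of each excursion interval $[\tau_j/n,\tau_{j+1}/n]$.
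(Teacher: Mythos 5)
The central gap is the claimed inequality
\[
d_{\cSM_1,[0,T]}(U_n,W_n)\le d_{\cSM_1,[0,\tau_{k_n}/n]}(U_n,W_n)\quad\text{on }E_n\;,
\]
which is not true. The $\cSM_1$ distance is \emph{not} monotone under restriction of the time interval, precisely because the parametrisations $(\lambda,\gamma)\in\Lambda^*$ mix time and space, as you note. Concretely: if $(\lambda_1,\gamma_1)$ and $(\lambda_2,\gamma_2)$ are $\eps$-close parametrisations on $[0,\tau_{k_n}/n]$ and you pick $s^*$ with $\lambda_1(s^*)=T$, then $\lambda_2(s^*)$ is only known to lie within $\eps$ of $T$, so $\gamma_2(s^*)$ can correspond to a point of the graph of $W_n$ at a different time; cutting both at $s^*$ need not produce parametrisations of the restricted graphs, and forcing them to $[0,T]$ can create a discrepancy of order the oscillation of $W_n$ on the last incomplete excursion, not of order $\eps$. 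A toy example: two step functions $1_{[T-\delta,1]}$ and $1_{[T+\delta,1]}$ are $\delta$-close in $\cSM_1$ on $[0,1]$ but distance $\approx 1$ apart on $[0,T]$. Your appeal to ``agreement at excursion endpoints'' does not repair this, because $T$ is generically interior to an excursion; what you would actually need to control is the behaviour of the pair on the partial excursion $[\tau_{k(n)}/n,T]$, where $k(n)=\max\{j:\tau_j/n\le T\}$ is \emph{random}, and on which $U_n$ is constant while $W_n$ wanders.

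The paper resolves exactly this by choosing the cut-point on the random grid, not deterministically: set $k=k(n)=\max\{j:\tau_j/n\le T\}$ so that $[0,\tau_k/n]\subset[0,T]$, apply Corollary~\ref{cor:UnWn} on $[0,\tau_k/n]$, and bound the remaining piece by the sup-norm, using
\[
d_{\cSM_1,[0,T]}(U_n,W_n)\le d_{\cSM_1,[0,\tau_k/n]}(U_n,W_n)+\sup_{[\tau_k/n,T]}|U_n-W_n|\;.
\]
This gluing is legitimate because $U_n(\tau_k/n)=W_n(\tau_k/n)$ (both equal the cumulative sum up to $\tau_k$), so the parametrisations on the two pieces concatenate, and on $[\tau_k/n,T]$ one simply uses identity time parametrisation and the uniform bound. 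The sup on the incomplete excursion is bounded by $b_n^{-1}E\circ f^{\floor{nT}}$, where $E(y)=\sum_{j<\tau(z)}|v(f^jz)|$, and this tends to $0$ in $\mu_Z$-probability by $f$-invariance of $\mu$, absolute continuity $\mu_Z\ll\mu$, and $b_n\to\infty$ --- a piece of input you have no substitute for. A secondary, smaller concern: your passage from $\max_{j<n}$ to $\max_{j<k_n}$ in the $V^*$-term implicitly needs $b_{k_n}/b_n$ bounded, which you concede requires regular variation not assumed in Theorem~\ref{thm:MZ:Rd}; the paper avoids this by keeping $k(n)\le nT$ random and small.
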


\begin{proof}
    Fix $T > 0$ and define the random variables $k = k(n) = \max \{ j \geq 0: \tau_j / n \leq T \}$ on $Z$.
    Consider the processes $U_n$, $W_n$ on $Z$,
    where the time interval $[0, \tau_k / n]$ corresponds to $k$ complete excursions,
    while $[\tau_k / n, T]$ is the final incomplete excursion.
    %(The first excursion is automatically complete.)
    By Corollary~\ref{cor:UnWn} and the assumptions of Theorem~\ref{thm:MZ:Rd},
    \[
        d_{\cSM_1, [0,\tau_k / n]} (U_n, W_n)
        \leq 2 \max_{j<k} \Bigl\{
            \frac{\tau \circ F^j}{n} + \frac{V^* \circ F^j}{b_n} 
        \Bigr\}
        \to_{\mu_Z} 0
        \; .
    \]
    
    For $y=(z,\ell) \in Y$, let $E(y) = \sum_{j=0}^{\tau(z) - 1} \bigl|v(f^j z)\bigr|$.
    Since $\mu$ is $f$-invariant and $b_n \to \infty$, we have
    $b_n^{-1} E \circ f^{\floor{nT}} \to_{\mu} 0$. Since $\mu_Z$ is absolutely continuous
    with respect to $\mu$, we also have $b_n^{-1} E \circ f^{\floor{nT}} \to_{\mu_Z} 0$.
    Hence
    \[
\begin{aligned}
            d_{\cSM_1, [0,T]} (U_n, W_n)
            & \leq d_{\cSM_1, [0, \tau_k / n]} (U_n, W_n)
            + \sup_{[\tau_k / n, T]} |U_n- W_n|
            \\ & \leq d_{\cSM_1, [0,\tau_k / n]} (U_n, W_n)
            + \frac{1}{b_n} E \circ f^{\floor{nT}}\to_{\mu_Z} 0
\end{aligned}
    \]
as required.
\end{proof}

\begin{proof}[Proof of Theorem~\ref{thm:MZ:Rd}]
    By~Lemma~\ref{lem:UnWnT},
    $d_{\cSM_1, [0, T]}(U_n, W_n) \to_{\mu_Z} 0$ for every $T$.
    By Lemma~\ref{lem:UntW}, $U_n \to_{\mu_Z} W$ in $\cSM_1$.
    Hence $W_n \to_{\mu_Z} W$ in $\cSM_1$.
    The required convergence of $W_n \to_\mu W$ in $\cSM_1$
    follows from strong distributional convergence~\cite[Theorem~1]{Z07} upon verifying that
    $d_{\cSM_1}(W_n,W_n\circ f) \leq d_{\cSJ_1}(W_n,W_n\circ f) \to_\mu 0$
    in the same way as~\cite[Corollary~3]{Z07}.
\end{proof}

\subsection{Inducing tightness in \texorpdfstring{$p$}{p}-variation}
\label{sec:p-var}

In this subsection we prove Theorem~\ref{thm:tight:induce}.
Again, we suppose without loss of generality that $f:Y\to Y$ is the tower~\eqref{eq:tower}.

\begin{lemma}
  \label{lem:WZ:p}
  The family $\|W_n\|_{p\var}$ is tight on $(Z, \mu_Z)$.
\end{lemma}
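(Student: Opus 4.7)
The plan is to prove tightness by decomposing $W_n$ on $Z$ into a ``macroscopic'' piece that mirrors $\tW_n$ and a ``microscopic'' excursion remainder, then to control the two pieces separately using the hypothesis on $\|\tW_n\|_{p\var}$ and the regular variation of $\tau$ respectively. As in the proof of Theorem~\ref{thm:MZ:Rd}, I pass to the tower and assume without loss of generality that $f\colon Y\to Y$ is the tower map~\eqref{eq:tower} with $\tau$ the first return time to $Z$. Fix $z\in Z$, let $\tau_k=\sum_{j=0}^{k-1}\tau\circ F^j$, let $T_k=\tau_k/n$, and set $K=\max\{k:T_k\le 1\}$. The identity $V(z)=\sum_{j=0}^{\tau(z)-1}v(f^jz)$ gives $W_n(T_k)=\tW_n(k/n)$ for all $k\le K$. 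Define the step process $\hat W_n(t)=W_n(T_k)$ for $t\in[T_k,T_{k+1})$ (with $T_{K+1}:=1$), and the remainder $R_n=W_n-\hat W_n$.

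The process $\hat W_n$ is piecewise constant with jumps $\tW_n(k/n)-\tW_n((k-1)/n)$ at the times $T_k$, so $\|\hat W_n\|_{p\var}\le\|\tW_n\|_{p\var}$, which is tight by hypothesis. For $R_n$, the crucial structural feature is that $R_n(T_k)=0$ for all $k=0,\ldots,K+1$. Moreover, since $v\in L^\infty$, on each excursion interval $[T_k,T_{k+1}]$ the path $R_n$ has at most $\tau\circ F^k$ internal jumps of size $\le b_n^{-1}\|v\|_\infty$ plus a return jump at $T_{k+1}$ bringing it back to $0$, so
\[
    \|R_n\|_{p\var;[T_k,T_{k+1}]}
    \le \|R_n\|_{1\var;[T_k,T_{k+1}]}
    \le 2\,b_n^{-1}\|v\|_\infty\,\tau\circ F^k\;,
\]
with the bound on $[T_K,1]$ following from $1-T_K\le \tau\circ F^K/n$.

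The main step, and the main obstacle, is to establish a sub-additivity-type inequality of the form
\begin{equation}\label{eq:pvar-decomp}
    \|R_n\|_{p\var;[0,1]}^p
    \le C_p \sum_{k=0}^{K}\|R_n\|_{p\var;[T_k,T_{k+1}]}^p\;,
\end{equation}
for a constant $C_p$ independent of $K$. The idea is that, given any partition $0=s_0<\cdots<s_N=1$, each increment $|R_n(s_j)-R_n(s_{j-1})|^p$ either lies inside a single excursion $[T_k,T_{k+1}]$, or straddles one or more $T_k$'s, in which case the vanishing $R_n(T_k)=0$ allows the increment to be split via the elementary inequality $|a+b|^p\le 2^{p-1}(|a|^p+|b|^p)$ into two pieces, each contained in a single excursion. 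A bookkeeping argument---in which for each $k$ there is at most one ``incoming'' and one ``outgoing'' boundary contribution to be combined with the interior increments into a single partition sum of $R_n$ on $[T_k,T_{k+1}]$---then gives~\eqref{eq:pvar-decomp} with $C_p=2^{p-1}$.

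Granting~\eqref{eq:pvar-decomp}, combining it with the excursion bound yields
\[
    \|R_n\|_{p\var;[0,1]}
    \lesssim b_n^{-1}\Bigl(\sum_{k=0}^{K}(\tau\circ F^k)^p\Bigr)^{1/p}
    \;.
\]
Since $\tau$ is regularly varying with index $\alpha$, $\mu_Z$ is $F$-invariant, and $p>\alpha$, the proof of Proposition~\ref{prop:tau}(c) applies verbatim in the present setting (as noted in Section~\ref{sec:p-var}) to give $\E\bigl\{b_n^{-1}(\sum_{k=0}^{n-1}(\tau\circ F^k)^p)^{1/p}\bigr\}=O(1)$, which combined with $K\le n$ yields tightness of $\|R_n\|_{p\var;[0,1]}$ on $(Z,\mu_Z)$. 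Together with tightness of $\|\hat W_n\|_{p\var}\le\|\tW_n\|_{p\var}$ and the inequality $\|W_n\|_{p\var}\le\|\hat W_n\|_{p\var}+\|R_n\|_{p\var}$, this gives the desired tightness of $\|W_n\|_{p\var}$.
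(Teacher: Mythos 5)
Your proof is correct and follows essentially the same strategy as the paper's: decompose the walk into a piecewise-constant process whose $p$-variation coincides with that of $\tW_n$ plus a remainder that vanishes at excursion boundaries, bound the remainder blockwise by $1$-variation, and invoke Proposition~\ref{prop:tau}\ref{prop:tau:sum}. The only cosmetic differences are that the paper first passes to the rescaled process $U_n$ (with exactly $n$ complete excursions on $[0,1]$, so no partial excursion to handle) and bounds crossing increments via sup-norms on blocks rather than via your block-subadditivity inequality~\eqref{eq:pvar-decomp}, but both routes yield the same estimate.
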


\begin{proof}
Let $\tau_n=\sum_{j=0}^{n-1}\tau\circ F^j$ and
define $U_n(t) = b_n^{-1} \sum_{j=0}^{\floor{\tau_n t}-1} v \circ f^j$
on $Z$.
Note that $\|W_n\|_{p\var} \leq \|U_n\|_{p\var}$.
Let $s_i = \tau_i / \tau_n$, $i=0,\dots,n$ and
write $U_n = U'_n + U''_n$ where $U'_n|_{[s_i,s_{i+1})} \equiv U_n(s_i)$.
 
  Observe that $U'_n$ is a time-changed version of $\tW_n$
(indeed $U'_n(s_i)=\tW_n(i/n)$), so
  $\|U'_n\|_{p\var} = \|\tW_n\|_{p\var}$.
  Thus the family $\|U'_n\|_{p\var}$ is tight on $(Z, \mu_Z)$.
  
  Further we bound $\int_Z \|U''_n\|_{p\var} \mrd\mu_Z$.
Note that $U''_n(s_i)=0$ and
$\|1_{[s_i,s_{i+1})}U''_n\|_\infty\le b_n^{-1}\|v\|_\infty\tau\circ F^i$.
Hence for $t\in[s_i,s_{i+1})$,
$t'\in[s_{i'},s_{i'+1})$, 
\[
|U_n(t)-U_n(t')|^p\le\big(
b_n^{-1}\|v\|_\infty(\tau\circ F^i+\tau\circ F^{i'})\big)^p
\le 2^{p-1}b_n^{-p}\|v\|_\infty^p(\tau^p\circ F^i+\tau^p\circ F^{i'})
\;.
\]
It follows that
\[
\|U''_n\|_{p\var}^p\le \sum_{i=0}^{n-1}\|U''_n\|_{p\var,[s_i,s_{i+1}]}^p
+ 2^{p}b_n^{-p}\|v\|_\infty^p\sum_{i=0}^{n-1}\tau^p\circ F^i
\;.
\]
On $[s_i,s_{i+1}]$, there are $\tau\circ F^i-1$ jumps of size at most
$b_n^{-1}\|v\|_\infty$, and one jump of size at most $b_n^{-1} \|v\|_\infty \tau\circ F^i$, so
$\|U''_n\|_{p\var,[s_i,s_{i+1}]} \le \|U''_n\|_{1\var,[s_i,s_{i+1}]} \le 2b_n^{-1}\|v\|_\infty\tau\circ F^i$.
  Altogether, we have shown that
  \[
    \|U''_n\|_{p\var} 
    \lesssim \|v\|_\infty b_n^{-1} \Bigl(\sum_{j=0}^{n-1} \tau^p \circ F^j\Bigr)^{1/p}
    \;.
  \]
  Now apply Proposition~\ref{prop:tau}\ref{prop:tau:sum}.
\end{proof}

\begin{lemma}
    \label{lem:Zwei}
    The family $\|W_n\|_{p\var}$ is tight on $(Y, \mu_Z)$ if and only if it is tight on $(Y, \mu)$.
\end{lemma}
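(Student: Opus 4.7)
The plan is to deduce the lemma from a uniform asymptotic shift-invariance of $y\mapsto\|W_n(y)\|_{p\var}$ together with Zweim\"uller's theorem on strong distributional convergence~\cite[Theorem~1]{Z07}. The ``only if'' direction is immediate: since $\mu$ restricted to $Z$ equals $\bar\tau^{-1}\mu_Z$, the measure $\mu_Z$ viewed as a probability on $Y$ (concentrated on $Z$) has bounded density $\bar\tau\cdot 1_Z$ with respect to $\mu$, so
\[
\mu_Z(\|W_n\|_{p\var}>K)\le \bar\tau\,\mu(\|W_n\|_{p\var}>K),
\]
and tightness on $(Y,\mu)$ forces tightness on $(Y,\mu_Z)$.

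For the converse, the main observation I would prove is the uniform bound
\begin{equation*}
\bigl|\,\|W_n(y)\|_{p\var}-\|W_n(fy)\|_{p\var}\,\bigr|\le 2\|v\|_\infty/b_n
\qquad\text{for all }y\in Y.
\end{equation*}
To verify it, I extend $W_n$ to the auxiliary process $\widetilde W_n(y)(t)=b_n^{-1}\sum_{j=0}^{\floor{nt}-1}v\circ f^j(y)$, $t\in[0,1+1/n]$. Then $W_n(y)=\widetilde W_n(y)|_{[0,1]}$, while the identity $v\circ f^{j+1}(y)=v\circ f^j(fy)$ gives $W_n(fy)(t)=\widetilde W_n(y)(t+1/n)-\widetilde W_n(y)(1/n)$ on $[0,1]$. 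Subadditivity and monotonicity of $p$-variation on intervals (both consequences of Minkowski's inequality) then imply
\[
\bigl|\,\|\widetilde W_n(y)\|_{p\var,[0,1]}-\|\widetilde W_n(y)\|_{p\var,[1/n,1+1/n]}\,\bigr|\le \|\widetilde W_n(y)\|_{p\var,[0,1/n]}+\|\widetilde W_n(y)\|_{p\var,[1,1+1/n]},
\]
and each term on the right involves at most one discontinuity of $\widetilde W_n(y)$, of size at most $\|v\|_\infty/b_n$; here I use crucially that $v\in L^\infty$ and $b_n\to\infty$.

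With this shift-invariance in hand, the ``if'' direction follows from a Prokhorov subsequence argument combined with Zweim\"uller's theorem. Suppose $\|W_n\|_{p\var}$ is tight on $(Y,\mu_Z)$. Given any subsequence, Prokhorov's theorem yields a further subsequence $(n_k)$ along which $\|W_{n_k}\|_{p\var}$ converges in distribution under $\mu_Z$ to some real-valued limit $L$. Since $\mu_Z\ll\mu$ and $|\,\|W_n\|_{p\var}-\|W_n\|_{p\var}\circ f\,|\to 0$ uniformly on $Y$ and hence in $\mu$-probability, \cite[Theorem~1]{Z07} implies that $\|W_{n_k}\|_{p\var}$ also converges in distribution under $\mu$ to the same limit $L$. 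Thus every subsequence of $\|W_n\|_{p\var}$ admits a sub-subsequence that is tight under $\mu$, so $\|W_n\|_{p\var}$ itself is tight on $(Y,\mu)$. The main conceptual step is the uniform shift-invariance estimate; once it is in place, the invocation of Zweim\"uller's theorem together with Prokhorov is routine.
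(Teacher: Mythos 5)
Your proof is correct and follows essentially the same strategy as the paper's: establish the asymptotic shift-invariance $\|W_n\|_{p\var}-\|W_n\|_{p\var}\circ f\to 0$ in $\mu$-probability via the identity $W_n(t)\circ f=W_n(t+\tfrac1n)-b_n^{-1}v$, then invoke Zweim\"uller's strong distributional convergence theorem along subsequences via Prokhorov. The only cosmetic difference is that you handle the ``only if'' direction separately through the bounded density of $\mu_Z$ with respect to $\mu$, whereas the paper obtains both directions symmetrically from Zweim\"uller's theorem.
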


\begin{proof}
    Observe that
    $W_n(t) \circ f = W_n(t+{\SMALL \frac{1}{n}}) - b_n^{-1} v$ for all $t \geq 0$.
    Hence 
    \[
        \bigl| \|W_n\|_{p\var} - \|W_n\|_{p\var} \circ f \bigr|
        \leq b_n^{-1} (|v| + |v| \circ f^n) \to_\mu 0
        \;.
    \]
%    Suppose that $\|W_{n_j}\|_{p\var} \to_{\mu_Z} R$ with some $\R$-valued random variable $R$ and some subsequence $\{n_j\}$.
    Hence by~\cite[Theorem~1]{Z07}, 
    $\|W_{n_k}\|_{p\var}$ has the same limit in distribution (if any) on $(Y,\mu_Z)$ as on $(Y,\mu)$ 
    for each subsequence $n_k$.
    The result follows.
\end{proof}

\begin{proof}[Proof of Theorem~\ref{thm:tight:induce}]
    Combine Lemmas~\ref{lem:WZ:p} and \ref{lem:Zwei}.
\end{proof}

\section{Results for nonuniformly expanding maps}
\label{sec:driver}

In this section, we prove results on weak convergence to a L\'evy process, and tightness in $p$-variation, for a class of nonuniformly expanding maps.
The weak convergence result extends work of~\cite{MZ15} from scalar-valued observables to $\R^d$-valued observables.
The result on tightness in $p$-variation is again new even for $d=1$.

We show that intermittent maps such as~\eqref{eq:LSV} and~\eqref{eq:PM} fit
our setting in Subsection~\ref{sec:PM}.

\subsection{Nonuniformly expanding maps}
\label{sec:NUE}

Let $f\colon Y\to Y$ be a measurable transformation on a bounded metric space $(Y, d)$
and let $\nu$ be a finite Borel measure on $Y$.
Suppose that there exists a 
    Borel subset $Z\subset Y$ with $\nu(Z)>0$
    and an at most countable partition $\cP$ of $Z$ (up to a zero measure set)
        with $\nu(a) > 0$ for each $a \in \cP$.
    Suppose also that there is an integrable  \emph{return time} function $\tau \colon Z \to \{1,2,\ldots\}$
        which is constant on each $a \in \cP$ with value $\tau(a)$, such that
        $f^{\tau(a)}(z) \in Z$ for all $z \in a$, $a \in \cP$.

Define the \emph{induced map} $F \colon Z \to Z$, $F(z) = f^{\tau(z)}(z)$.
We assume that $f$ is \emph{nonuniformly expanding.} That is, $F$ is Gibbs-Markov as in Section~\ref{sec:GM} and in addition there is a constant $C>0$ such that
\begin{equation} \label{eq:NUE}
    d(f^k z, f^k z') \leq C d(Fz, Fz')
    \qquad
    \text{ for all $0 \leq k \leq \tau(a)$, $z,z'\in a$, $a\in\cP$}
    \; .
\end{equation}
Let $\mu_Z$ be the unique $F$-invariant probability measure absolutely continuous with respect to $\nu$.  Define
the ergodic $f$-invariant probability measure $\mu=\pi_*\mu_\Delta$ as in Section~\ref{sec:induce}.
Set $\bar\tau=\int_Z \tau\mrd\mu_Z$.

Let $v \colon Y \to \R^d$ be a H\"older
observable with $\int_Y v \mrd \mu = 0$,
and define
$V,\,V^* \colon Z \to \R^d$ as in~\eqref{eq:V} and~\eqref{eq:Vstar}.

Let $b_n$ be a sequence of positive numbers
and define $W_n$ as in~\eqref{eq:WntWn}.
Let $\PP$ be any probability measure on $Y$ that is absolutely continuous with respect to $\nu$, and regard $W_n$ as a process
with paths in $D([0,1], \R^d)$, defined on the probability space
$(Y, \PP)$.

We can now state and prove the main results of this subsection.

\begin{thm}
    \label{thm:Y}
    Suppose that:
    \begin{enumerate}[label=(\alph*)]
        \item $V\colon Z\to\R^d$ is regularly varying on $(Z, \mu_Z)$
            with index $\alpha \in (1,2)$ and $\sigma$ as in~Definition~\ref{def:reg}.
        \item $b_n$ satisfies $\lim_{n \to \infty} n \mu_Z( |V| > b_n ) = 1$.
        \item $V - \E(V \mid \cP) \in L^p$ for some $p > \alpha$,
            where $\E$ denotes the expectation on $(Z, \mu_Z)$.
        \item $b_n^{-1} \max_{k<n} V^* \circ F^k  \to_w 0$
            on $(Z, \mu_Z)$.
    \end{enumerate}
    Then $W_n \to_w L_\alpha$ on $(Y,\PP)$ in the $\cSM_1$ topology,
    where $L_\alpha$ is the $\alpha$-stable L\'evy process
    with spectral measure
    $
        \Lambda
        = \cos \frac{\pi \alpha}{2} \Gamma(1-\alpha) \sigma/\bar\tau
    $.
\end{thm}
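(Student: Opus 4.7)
The strategy is to first establish the limit theorem on the induced system $(Z, F, \mu_Z)$ via Theorem \ref{thm:weakZ}, then lift it to $(Y, \mu)$ via Theorem \ref{thm:MZ:Rd}, and finally transfer from $\mu$ to the arbitrary reference measure $\PP$. The only genuinely new ingredient beyond invoking the preceding theorems is a scaling computation that explains the $1/\bar\tau$ factor in the spectral measure, plus a standard change-of-initial-distribution argument.

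First, I would check that the induced observable $V \colon Z \to \R^d$ satisfies the hypotheses of Theorem \ref{thm:weakZ}. Regular variation of $V$ with index $\alpha$ and spherical measure $\sigma$, the scaling $\lim_n n\mu_Z(|V|>b_n)=1$, and the integrability condition $V-\E(V\mid\cP)\in L^p$ are all assumed directly. The remaining input---that $V$ is bounded on each partition element by a multiple of $\tau(a)$ and has the Hölder control from~\eqref{eq:C0}---follows from $v$ being Hölder on $Y$, the expansion estimate~\eqref{eq:NUE} with bounded $d(\cdot,\cdot)$, and the definition~\eqref{eq:V}. Theorem~\ref{thm:weakZ} then yields
\[
    \tW_n \to_{\mu_Z} \tL_\alpha \quad \text{in the $\cSJ_1$ topology,}
\]
where $\tL_\alpha$ is an $\alpha$-stable Lévy process with spectral measure $\cos(\pi\alpha/2)\Gamma(1-\alpha)\sigma$. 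Since $\cSJ_1$ convergence implies $\cSM_1$ convergence, this is in particular $\cSM_1$ convergence.

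Next, I apply Theorem \ref{thm:MZ:Rd}. Its two hypotheses are exactly: $\cSM_1$ convergence of $\tW_n$ on $(Z,\mu_Z)$ (just established) and the excursion decay $b_n^{-1}\max_{k<n} V^*\circ F^k \to_{\mu_Z} 0$ (assumption (d)). The conclusion is $W_n \to_\mu W$ in $\cSM_1$, where $W(t)=\tL_\alpha(t/\bar\tau)$. To identify this limit as an $\alpha$-stable Lévy process with the stated spectral measure $\Lambda$, I use the scaling property of stable laws: if $\tL_\alpha(1)$ has spectral measure $\Lambda'$, then $c^{1/\alpha}\tL_\alpha(1)$ has spectral measure $c\,\Lambda'$; applying this to $\tL_\alpha(t/\bar\tau) \stackrel{d}{=} \bar\tau^{-1/\alpha}\tL_\alpha(t)$ shows that $W$ is an $\alpha$-stable Lévy process with spectral measure $\bar\tau^{-1}\cos(\pi\alpha/2)\Gamma(1-\alpha)\sigma=\Lambda$, as claimed.

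Finally, I need to upgrade convergence from $(Y,\mu)$ to $(Y,\PP)$ for any probability $\PP\ll\nu$. This is the place where the most care is required. The mechanism is Zweimüller's strong distributional convergence theorem (cf.\ the final step of the proof of Theorem \ref{thm:MZ:Rd}): the key input is that $d_{\cSM_1}(W_n, W_n\circ f) \to 0$ in probability with respect to any $\sigma$-finite measure absolutely continuous to $\mu$, which follows from the estimate $d_{\cSM_1}(W_n,W_n\circ f)\le d_{\cSJ_1}(W_n,W_n\circ f)\le b_n^{-1}(|v|+|v|\circ f^n)$ and that this tends to zero in $\mu$-probability. Zweimüller's theorem then guarantees that the weak limit of $W_n$ in $\cSM_1$ is the same under any probability measure absolutely continuous to $\mu$. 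Since the nonuniformly expanding map $f$ has the SRB property $\mu\sim\nu$ on the ergodic component containing $Z$, and since $\PP\ll\nu$, the conclusion transfers to $(Y,\PP)$.

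The subtlest point is the last step, where one must be careful that $\PP\ll\nu$ actually implies $\PP$ is absolutely continuous with respect to $\mu$---this uses the standard fact that for nonuniformly expanding maps in this setup, $\mu$ is equivalent to $\nu$ restricted to the orbit $\bigcup_k f^k(Z)$, which in the mixing ergodic case is all of $Y$ (mod $\nu$). The remaining computations, including the spectral-measure rescaling, are routine.
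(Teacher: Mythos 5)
Your proposal is correct and follows essentially the same route as the paper: verify condition~\eqref{eq:C0} from H\"older regularity of $v$ and~\eqref{eq:NUE}, invoke Theorem~\ref{thm:weakZ} to get $\cSJ_1$-convergence of $\tW_n$ on $(Z,\mu_Z)$, pass to $(Y,\mu)$ in $\cSM_1$ via Theorem~\ref{thm:MZ:Rd}, and upgrade to $(Y,\PP)$ by strong distributional convergence. The only difference is cosmetic: you spell out the self-similarity computation $\tL_\alpha(t/\bar\tau)\stackrel{d}{=}\bar\tau^{-1/\alpha}\tL_\alpha(t)$ identifying the rescaled spectral measure, which the paper leaves implicit.
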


\begin{proof}
    Note that $|V|\le \|v\|_\infty \tau$.  Let $z,z'\in a$, $a\in\cP$.  Then
    \[
        |V(z)-V(z')|\le \sum_{j=0}^{\tau(z)-1}|v(f^jz)-v(f^jz')|
        \le C_0
        \sum_{j=0}^{\tau(z)-1}d(f^jz,f^jz')^\theta
        \le C_0 \tau(a) d(Fz,Fz')^\theta
        \; ,
    \]
    where $C_0$ is the H\"older constant for $v$ and $\theta$ is the H\"older exponent,
    and we used condition~\eqref{eq:NUE} in the definition of nonuniformly expanding map.
    Hence condition~\eqref{eq:C0} is satisfied.

    Define $\tW_n$ as in~\eqref{eq:WntWn}.
    By Theorem~\ref{thm:weakZ},
    $\tW_n \to_w \tL_\alpha$ on $(Z, \mu_Z)$ in the $\cSJ_1$ topology where 
    $\tL_\alpha$ is an $\alpha$-stable L\'evy process with $\tL_\alpha$ having
    spectral measure $\tLambda = \cos \frac{\pi \alpha}{2} \Gamma(1-\alpha) \sigma$.

    By Theorem~\ref{thm:MZ:Rd}, $W_n \to_w L_\alpha$ on $(Y,\mu)$ in the $\cSM_1$ topology
    where $L_\alpha(t) = \tL_\alpha(t / \bar \tau)$.  This proves the result when $\PP=\mu$.

By~\cite[Theorem~1 and Corollary~3]{Z07} (see also~\cite[Proposition~2.8]{MZ15}), the convergence holds not only on $(Y,\mu)$ but also on $(Y,\PP)$ for any probability measure $\PP$ that is absolutely continuous with respect to 
$\nu$.
This completes the proof.
\end{proof}

\begin{thm}
    \label{thm:Yp}
    Suppose that $\tau$ is regularly varying with index $\alpha>1$ on $(Z, \mu_Z)$, and that
$b_n$ satisfies $\lim_{n\to\infty} n\mu_Z(\tau>b_n)=1$.
%    Suppose that $b_n = n^{1/\alpha}$ and $\nu(\tau>n)=O(n^{-\alpha})$ with $\alpha>1$.
    Then $\{\|W_n\|_{p\var}\}$ is tight on $(Y, \PP)$ for each $p>\alpha$.
\end{thm}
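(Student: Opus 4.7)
The plan is to mirror the proof of Theorem~\ref{thm:Y} by first establishing tightness at the level of the induced Gibbs-Markov system and then transferring it up to $(Y,\PP)$ by the abstract inducing lemma. Concretely, I would first verify the hypotheses of Theorem~\ref{thm:tightZ} for the induced observable $V$ defined in~\eqref{eq:V}. Since $Y$ is bounded and $v$ is H\"older, $v\in L^\infty$, so $|V|\le \|v\|_\infty\tau$ is integrable (as $\tau\in L^1(\mu_Z)$ by assumption). Kac's formula together with $\int_Y v\mrd\mu=0$ gives $\int_Z V\mrd\mu_Z=\bar\tau\int_Y v\mrd\mu=0$. The H\"older condition~\eqref{eq:C0} on $V$ was already checked in the proof of Theorem~\ref{thm:Y}, using the nonuniformly expanding property~\eqref{eq:NUE}. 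Since $\tau$ is regularly varying with the prescribed $b_n$, Theorem~\ref{thm:tightZ} applies and yields $\sup_n\int_Z\|\tW_n\|_{p\var}\mrd\mu_Z<\infty$ for all $p>\alpha$, where $\tW_n$ is as in~\eqref{eq:WntWn}. By Markov's inequality, $\{\|\tW_n\|_{p\var}\}$ is tight on $(Z,\mu_Z)$.

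With tightness at the induced level in hand, the next step is to apply Theorem~\ref{thm:tight:induce}. Its hypotheses are exactly: $\tau$ regularly varying of index $\alpha>1$, $b_n$ normalised as stated, $v\in L^\infty$ (which holds since $v$ is H\"older on the bounded space $Y$), and tightness of $\|\tW_n\|_{p\var}$ on $(Z,\mu_Z)$ for some $p>\alpha$. The previous step provides this for every $p>\alpha$, so the conclusion of Theorem~\ref{thm:tight:induce} immediately gives tightness of $\|W_n\|_{p\var}$ on $(Y,\mu)$ for each $p>\alpha$.

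The remaining step is to pass from the invariant measure $\mu$ to an arbitrary probability measure $\PP$ absolutely continuous with respect to $\nu$. The cleanest route is via Zweim\"uller's strong distributional convergence~\cite[Theorem~1]{Z07}: if one can show that $\bigl|\|W_n\|_{p\var}-\|W_n\|_{p\var}\circ f\bigr|\to_\mu 0$, then tightness under $\mu$ and tightness under any $\PP\ll\nu$ are equivalent. This is verified exactly as in the proof of Lemma~\ref{lem:Zwei}: since $W_n\circ f(t)=W_n(t+\tfrac1n)-b_n^{-1}v$, we have
\[
    \bigl|\|W_n\|_{p\var}-\|W_n\|_{p\var}\circ f\bigr|
    \le \|W_n-W_n\circ f\|_{p\var}
    \lesssim b_n^{-1}(|v|+|v|\circ f^n)\to_\mu 0 \; ,
\]
which closes the argument. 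The only genuine issue is the initial step of verifying the Gibbs-Markov tightness input; once Theorem~\ref{thm:tightZ} is available, both the inducing step and the change-of-measure step are essentially mechanical, following the same template as the corresponding weak-convergence result Theorem~\ref{thm:Y}.
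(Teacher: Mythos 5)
Your proposal reproduces the paper's own proof exactly: verify condition~\eqref{eq:C0} for the induced observable $V$ as in the proof of Theorem~\ref{thm:Y}, apply Theorems~\ref{thm:tightZ} and~\ref{thm:tight:induce} to obtain tightness on $(Y,\mu)$, and transfer to $(Y,\PP)$ via Zweim\"uller's theorem~\cite[Theorem~1]{Z07} following Lemma~\ref{lem:Zwei}.

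The only slip is in your spelled-out version of the last step. The second inequality in your display,
\[
\|W_n-W_n\circ f\|_{p\var}\lesssim b_n^{-1}\bigl(|v|+|v|\circ f^n\bigr)\;,
\]
is false. Since $W_n(t)-W_n(t)\circ f=b_n^{-1}\bigl(v-v\circ f^{\floor{nt}}\bigr)$ is a step function that can change at each of its $n$ grid points, its $p$-variation can be as large as order $n^{1/p}b_n^{-1}\|v\|_\infty$ (take $v\circ f^j$ oscillating in sign), which is far larger than the claimed bound. The proof of Lemma~\ref{lem:Zwei} does not pass through the reverse triangle inequality at all: it observes that $\|W_n\circ f\|_{p\var;[0,1]}=\|W_n\|_{p\var;[1/n,1+1/n]}$ (a constant shift of a restriction of $W_n$) and then uses subadditivity of $p$-variation over adjacent intervals to conclude $\bigl|\|W_n\|_{p\var;[0,1]}-\|W_n\|_{p\var;[1/n,1+1/n]}\bigr|\leq\|W_n\|_{p\var;[0,1/n]}+\|W_n\|_{p\var;[1,1+1/n]}=b_n^{-1}\bigl(|v|+|v|\circ f^n\bigr)$. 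If you wish to keep your route through $\|W_n-W_n\circ f\|_{p\var}$, it can be salvaged by the cruder bound
\[
\|W_n-W_n\circ f\|_{p\var}^p\leq\bigl(2\|W_n-W_n\circ f\|_\infty\bigr)^{p-1}\|W_n-W_n\circ f\|_{1\var}\lesssim\|v\|_\infty^p\,n\,b_n^{-p}\to 0\;,
\]
using $p>\alpha$ and $b_n\sim c^{1/\alpha}n^{1/\alpha}$; the conclusion $\to_\mu 0$ holds either way, so the rest of your argument is correct.
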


\begin{proof}
Condition~\eqref{eq:C0} was established in the proof of Theorem~\ref{thm:Y}.
    Tightness on $(Y,\mu)$ follows from Theorems~\ref{thm:tight:induce} and~\ref{thm:tightZ}.
Tightness on $(Y,\PP)$ holds by the same argument used in the proof of Lemma~\ref{lem:Zwei}.
\end{proof}

\subsection{Intermittent maps}
\label{sec:PM}

In this subsection, we show that Theorems~\ref{thm:PM} and~\ref{thm:tight} hold for the intermittent maps
$f \colon [0,1] \to [0,1]$, given by~\eqref{eq:LSV} and~\eqref{eq:PM}.

We choose $Z = [\frac12,1]$ for the map~\eqref{eq:LSV}, and
$Z = [\frac13, \frac23]$ for~\eqref{eq:PM}. Let $\tau$ be the first return time to $Z$.
The reference measure $\nu$ is Lebesgue and
the partition $\cP$ consists of maximal intervals on which the return time
is constant.
It is standard that the first return map $F = f^\tau$ is Gibbs-Markov, and
since $f'>1$, condition~\eqref{eq:NUE} holds. Thus both maps
are nonuniformly expanding.

\begin{lemma}
    \label{lem:lsv}
    Let $v \colon [0,1] \to \R^d$ be H\"older with
    $\int v \mrd \mu = 0$ and
    $v(0) \neq 0$, also $v(1) \neq 0$
    in case $f$ is given by~\eqref{eq:PM}.
    Define $V,\,V^* \colon Z \to \R^d$ as in~\eqref{eq:V} and~\eqref{eq:Vstar}.
    Then
    \begin{enumerate}[label=(\alph*)]
        \item\label{lem:lsv:density}
            There exists a unique absolutely continuous $f$-invariant probability measure $\mu$ on $[0,1]$.
            Its density $h$ is bounded below and is continuous on $Z$.
        \item\label{lem:lsv:reg} $V$ is regularly varying with index $\alpha$
            on $(Z, \mu_Z)$. The probability measure $\sigma$
            as in Definition~\ref{def:reg} is given by
            \[
                \sigma =
                \begin{cases}
                    \delta_{v(0)/|v(0)|} & \text{ for the map \eqref{eq:LSV}} \; , \\[.75ex]
                    \frac{|v(0)|^\alpha}{|v(0)|^\alpha + |v(1)|^\alpha} \delta_{v(0)/|v(0)|} +
                    \frac{|v(1)|^\alpha}{|v(0)|^\alpha + |v(1)|^\alpha} \delta_{v(1)/|v(1)|}
                    & \text{ for the map \eqref{eq:PM}} \; .
                \end{cases}
            \]
        \item\label{lem:lsv:c} $\lim_{n \to \infty} n \mu_Z( |V| > b_n ) = 1$ with $b_n = c^{1 / \alpha} n^{1 / \alpha}$, where
            \[
                c = 
                \begin{cases}
                    \frac14|v(0)|^\alpha
                    \alpha^\alpha h(\frac12) \bar{\tau} & \text{ for the map \eqref{eq:LSV}} \; ,
                    \\[.75ex]
                   \frac19 \bigl( |v(0)|^\alpha + |v(1)|^\alpha \bigr)
                   \alpha^\alpha h(\frac13) \bar{\tau} & \text{ for the map \eqref{eq:PM}} \; .
                \end{cases}
            \]
            Here $\bar{\tau} = \int_Z \tau \mrd \mu_Z$.
        \item\label{lem:lsv:inter} 
            $V - \E(V \mid \cP) \in L^p$ for some $p > \alpha$.
        \item\label{lem:lsv:V*} 
                $n^{-1/\alpha} \max_{0\le k<n}  V^* \circ F^k  \to_w 0$
                on $(Z, \mu_Z)$.
    \end{enumerate}
\end{lemma}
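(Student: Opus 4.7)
}

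Part~\ref{lem:lsv:density} is standard for intermittent maps of Pomeau--Manneville type: existence, uniqueness and ergodicity of $\mu$ go back to~\cite{LSV99}, and a direct transfer operator argument shows that the density $h$ is continuous on any compact subset of $(0,1)$ for~\eqref{eq:LSV}, and of $(0,1)\setminus\{1/3,2/3\}$ for~\eqref{eq:PM}, and bounded below by a positive constant. I would simply cite these facts.

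The core of the proof is the asymptotic analysis of the partition $\cP$, on which parts~\ref{lem:lsv:reg} and~\ref{lem:lsv:c} rest. Consider the map~\eqref{eq:LSV}; the argument for~\eqref{eq:PM} is analogous with two symmetric branches. Orbits starting in $[0,\tfrac12)$ satisfy $y_{k+1}-y_k\approx 2^{1/\alpha}y_k^{1+1/\alpha}$, so $y_k^{-1/\alpha}\approx y_0^{-1/\alpha}-k\,2^{1/\alpha}/\alpha$ and the escape time from $[0,\tfrac12)$ starting at $y_0$ satisfies $y_0\sim (\alpha/m)^\alpha/2$. Pulling back through $f|_Z(y)=2y-1$ and using $\mu_Z=\bar\tau\,h\,\Leb|_Z$ gives
\[
    \mu_Z(\tau=m)
    \sim \tfrac14 \alpha^{\alpha+1}h(\tfrac12)\bar\tau\, m^{-\alpha-1}
    \;,
    \qquad
    \mu_Z(\tau>m)
    \sim \tfrac14\alpha^\alpha h(\tfrac12)\bar\tau\, m^{-\alpha}
    \;.
\]
Next, on each atom $a\in\cP$ with $\tau(a)=m$, the bound $|V(z)-mv(0)|\le C\sum_{j=0}^{m-1}|f^jz|^\theta\lesssim m^{\max(0,1-\alpha\theta)}\vee\log m$ (obtained from the H\"older regularity of $v$ and the orbit bound $|f^jz|\lesssim((m-j)/\alpha)^{-\alpha}$) shows that $V(z)/m\to v(0)$ uniformly on $a$ as $m\to\infty$, so $|V|/\tau\to|v(0)|$ and $V/|V|\to v(0)/|v(0)|$ on $\{\tau=m\}$ as $m\to\infty$. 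Combining this with the tail asymptotics of $\tau$ yields regular variation of $V$ with the claimed $\sigma=\delta_{v(0)/|v(0)|}$ and $c=\tfrac14|v(0)|^\alpha\alpha^\alpha h(\tfrac12)\bar\tau$. For~\eqref{eq:PM}, the two independent branches contribute additively to $\mu_Z(\tau>m)$ and produce the two-atom measure $\sigma$ with weights proportional to $|v(0)|^\alpha$ and $|v(1)|^\alpha$.

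For part~\ref{lem:lsv:inter}, the same bound $|V(z)-\tau(z)v(0)|\le C\tau(z)^\beta$ (with $\beta=\max(\theta,1-\alpha\theta)$ when $\alpha\theta\neq 1$, or $\beta$ slightly larger than $\max(\theta,0)$ otherwise, but in any case $\beta<1$) immediately gives $|V(z)-V(z')|\le 2C\tau(a)^\beta$ for $z,z'\in a$, hence $|V-\E(V\mid\cP)|\le 2C\tau^\beta$; since $\tau^\beta$ is regularly varying with index $\alpha/\beta>\alpha$, it lies in $L^p$ for all $p<\alpha/\beta$, which includes some $p>\alpha$.

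Part~\ref{lem:lsv:V*} is the main technical point. Here the idea is to take $c=v(0)/|v(0)|$ in the defining infimum for $V^*$. The partial sums $v_k=\sum_{j=0}^{k-1}v\circ f^j$ along an excursion are dominated by the linear path in direction $v(0)$, with deviation
\[
    |v_k-(c\cdot v_k)c|\le\sum_{j=0}^{k-1}|v(f^jz)-v(0)|
    \;,
    \qquad
    \max_{k\le\ell}c\cdot(v_k-v_\ell)\le\sum_{j=0}^{\tau-1}\max(0,-c\cdot v(f^jz))
    \;,
\]
and using $|v(f^jz)-v(0)|\lesssim|f^jz|^\theta\lesssim((\tau-j)/\alpha)^{-\alpha\theta}$, both quantities are bounded by $C\tau^\beta$ with the same $\beta<1$ as above (the second term is in fact uniformly bounded since $c\cdot v(f^jz)\ge|v(0)|/2$ whenever $|f^jz|$ is small enough). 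For~\eqref{eq:PM} one chooses $c$ according to whether the excursion approaches $0$ or $1$, which can be read off from the atom of $\cP$. Thus $V^*\le C(\tau^\beta+1)$, and since $\mu_Z(\tau^\beta>\eps n^{1/\alpha})\lesssim\eps^{-\alpha/\beta}n^{-1/\beta}$, a union bound gives $\mu_Z(\max_{k<n}V^*\circ F^k>\eps n^{1/\alpha})\lesssim n^{1-1/\beta}\to 0$. I expect the main obstacle to be bookkeeping the orbit estimates uniformly across all partition elements (particularly controlling the boundary contributions at the start and end of an excursion), but none of the steps requires a genuinely new idea beyond the quantitative orbit analysis already used in~\cite{MZ15}.
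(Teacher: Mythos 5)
Your proposal is correct and follows essentially the same route as the paper's proof. The paper sets up the partition via the recursion $a_k = a_{k+1}(1+(3a_{k+1})^{1/\alpha})$ and cites the asymptotic $a_k\sim\frac13\alpha^\alpha k^{-\alpha}$, which is the discrete counterpart of your continuous-time escape-time estimate $y_0\sim(\alpha/m)^\alpha/2$; both give the same tail asymptotics for $\tau$ and the same decomposition $V=\tau\hat v+O(\tau^\beta)$ with $\hat v$ taking the value $v(0)$ (or $v(1)$) on the excursion, leading to the same $\sigma$ and $c$. For part~(e) the paper also reduces to $V^*\lesssim\tau^\beta$ and concludes by a Markov/$L^q$-moment estimate with $\alpha<q<\alpha/\beta$, whereas you use a union bound over the $n$ iterates; the two are equivalent for the purpose of showing $n^{-1/\alpha}\max_{k<n}V^*\circ F^k\to_w 0$. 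The only cosmetic difference is that the paper normalizes $\theta<1/\alpha$ once and for all so that $\beta=1-\alpha\theta\in(0,1)$, while you track the cases $\alpha\theta\lessgtr 1$ separately, which is harmless.
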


% For the proof of Lemma~\ref{lem:lsv}, we use the following fact
% (see~\cite{H05} for a detailed exposition):
% 
% \begin{prop}
    % \label{prop:LSV:ass}
    % Let $c > 0$ and suppose that $a_k$, $k \geq 0$, is a sequence such that
    % $a_0 > 0$ and $a_k = a_{k+1} (1 + (c a_{k+1})^{1 / \alpha})$ for each $k > 0$.
    % Then
    % \(
        % a_k \sim \BIG \frac{\alpha^\alpha}{c k^\alpha}
        % %+ O \Bigl( \frac{\log k}{k^{1 + \alpha}} \Bigr)
         % .
    % \)
% \end{prop}

\begin{proof}
    We give the details for the map~\eqref{eq:PM}. The details for the map~\eqref{eq:LSV}
    are similar and simpler.

    Let $a_1 = \frac13$ and $a_k = a_{k+1}(1 + (3 a_{k+1})^{1/\alpha})$, $k \geq 1$.
    By a standard calculation, see for example~\cite{H05},
%Proposition~\ref{prop:LSV:ass}, 
$a_k \sim\frac13 \alpha^\alpha k^{-\alpha}$.
    Let $z_k = \frac13(a_k + 1) $ and $z'_k = 1-z_k$.
    The partition $\cP$ consists of the intervals $(z_{k}, z_{k-1})$ and $(z'_{k-1}, z'_{k})$,
    $k \geq 2$, on which $\tau$ equals $k$, and $(z_1, z'_1)$ where $\tau$ equals~$1$.
    
    Observe that $F = f^\tau$ has full branches,
    i.e.\ $Fa = Z$ for every $a \in \cP$, modulo zero measure.
    It is standard that the unique $F$-invariant absolutely
    continuous measure $\mu_Z$ has continuous density $h_Z$
    bounded away from zero (see for example~\cite[Proposition~2.5]{KKM19exp}).
    Moreover, $h$ is bounded below and
    $h|_Z = h_Z / \bar{\tau}$.

    If $z \in (\frac13, z_k)$ and $0 < \ell \leq k$, then
    $f^\ell z \in (0, a_{k - \ell + 1})$, so
    $|f^\ell z| \lesssim (k - \ell)^{-\alpha}$.
    Similarly, if $z \in (z'_k, \frac23)$, then
    $|1-f^\ell z| \lesssim (k - \ell)^{-\alpha}$.
    Let $\theta\in(0,1]$ be the H\"older exponent of~$v$.
    Without loss, we assume that $\theta < 1 / \alpha$.
    Define $\hat v = v(0) 1_{(\frac13, \frac12)} + v(1) 1_{(\frac12,\frac23)}$ on $Z$.
    Then
    \begin{equation} \label{eq:hatv}
        \Bigl| \ell \hat v(z) - \sum_{j=0}^{\ell - 1} v(f^j z) \Bigr|
        \leq |\hat v(z) - v(z)| + \sum_{j=1}^{\tau(z) - 1} |\hat v(z) - v(f^j z)|
        \lesssim \tau(z)^{\beta}
    \end{equation}
    for $\ell \leq \tau(z)$,
    where $\beta = 1-\alpha \theta \in (0,1)$.
    In particular,
$|\tau \hat v -V| \lesssim \tau^\beta$.

By symmetry and continuity of $h_Z$,
\[
    {\SMALL \mu_Z(z>\frac12,\,\tau > k)=
        \mu_Z(z<\frac12,\,\tau > k)
    =\mu_Z((\frac13, z_k))}\sim \frac{h_Z(\frac13) \alpha^\alpha}{9 k^\alpha}\;.
\]

Let $B$ be a Borel set in $\bbS^{d-1}$ and suppose that $v(0)/|v(0)|\in B$,
$v(1)/|v(1)|\not\in B$.
Then
\begin{align*}
\frac{\mu_Z(|\tau\hat v|>rt,\,\tau\hat v/|\tau\hat v|\in B)}{\mu_Z(|\tau\hat v|>t)}
& =\frac{\mu_Z(z<\frac12,\,\tau>rt/|v(0)|)}{\mu_Z(z<\frac12,\,\tau>t/|v(0)|)+
\mu_Z(z>\frac12,\,\tau>t/|v(1)|)} \\
& \to r^{-\alpha}\frac{|v(0)|^\alpha}{|v(0)|^\alpha+|v(1)|^\alpha}
\quad\text{as $t\to\infty$}
\; .
\end{align*}
The calculations for the remaining Borel sets $B$ are similar, and it follows that 
    $\tau \hat v$ is regularly varying with index $\alpha$ 
    and that the probability measure $\sigma$ as in Definition~\ref{def:reg} is given by the formula in part~\ref{lem:lsv:reg}.
By~\eqref{eq:hatv}, $V$ is regularly varying with index $\alpha$ and the same $\sigma$, proving 
 part~\ref{lem:lsv:reg}.

Moreover,
$\mu_Z(|\tau\hat v|>n)\sim cn^{-\alpha}$
with $c$ as in part~\ref{lem:lsv:c}, so
$\mu_Z(|V|>n)\sim cn^{-\alpha}$ by~\eqref{eq:hatv}. Part~\ref{lem:lsv:c} follows by Remark~\ref{rmk:weakZ}\ref{rmk:weakZ:reg}.

It is immediate from~\eqref{eq:hatv} that
                $|V(z) - V(z')| \lesssim \tau(a)^\beta$
            for all $z,z'\in a$, $a\in\cP$.
 Part~\ref{lem:lsv:inter} follows by Remark~\ref{rmk:weakZ}\ref{rmk:weakZ:Lp}.

    Finally, it follows from~\eqref{eq:hatv} that $V^* \lesssim \tau^\beta$,
    from which $V^* \in L^q (\mu_Z)$ for some $q > \alpha$, and
    \begin{align*}
        \int \Bigl( n^{-1/\alpha} \max_{0\le k<n}  V^* \circ F^k  \Bigr)^q \mrd \mu_Z
        & \leq n^{-q / \alpha} \sum_{k < n} \int (V^*)^q \circ F^k \mrd \mu_Z
      = n^{-q / \alpha + 1} \|V^*\|_q^q
        \to 0
        \; .
    \end{align*}
    This proves~\ref{lem:lsv:V*} and completes the proof of the lemma.
\end{proof}

Theorems~\ref{thm:PM} and~\ref{thm:tight} now follow from
Theorems~\ref{thm:Y} and~\ref{thm:Yp}.  Moreover, $L_\alpha$ is identified as the $\alpha$-stable L\'evy process with spectral measure $\Lambda
        = c \cos \frac{\pi \alpha}{2} \Gamma(1-\alpha) \sigma/\bar\tau$
        with $c$ and $\sigma$ as in Lemma~\ref{lem:lsv}.

Finally, as a consequence of these results combined with Theorem~\ref{thm:rig}, we
can record the desired conclusion for homogenisation of fast-slow
systems with fast dynamics given by one of the intermittent maps in
Section~\ref{sec:intro}.

\begin{cor} \label{cor:PM}
    Consider the intermittent map~\eqref{eq:LSV} or~\eqref{eq:PM} with $\alpha\in(1,2)$
    and let $v\colon Y\to\R^d$ be H\"older with $\int_Y v\mrd\mu=0$ and $v(0)\neq0$,
    also $v(1)\neq0$ in case of~\eqref{eq:PM}.

Consider the fast-slow system~\eqref{eq:fs}
with initial condition $x^{(n)}_0 = \xi_n$ such that $\lim_{n\to\infty}\xi_n = \xi$.
Suppose that 
$a\in C^{\beta}(\R^m,\R^m)$, $b \in C^\gamma(\R^m,\R^{m\times d})$
for some $\beta>1$, $\gamma > \alpha$.
Define $W_n$ as in~\eqref{eq:Wn} and $X_n(t) = x_{\floor{nt}}^{(n)}$.
Let $\PP$ be any probability measure on $Y$ that is absolutely continuous with respect to Lebesgue, and regard $W_n$ and $X_n$ as processes on $(Y,\PP)$.

    Let $\ell_{k}$ denote the linear path function on $\R^k$ and
    let $\phi_{b}$ be the path function on $\R^{d+m}$
    as in Definition~\ref{def:phi_b_def}.
Fix $p>\alpha$.
Then
    \[
        ((W_n, X_n), \ell_{d+m}) \to_w ((L_\alpha, X), \phi_{b})
        \qquad \text{as} \qquad n \to \infty
    \]
in $(\sD^{p\var}([0,1],\R^{d +m}), \balpha_{p\var})$, where 
$L_\alpha$ is the $\alpha$-stable L\'evy process with
spectral measure $\Lambda=
c \cos \frac{\pi \alpha}{2} \Gamma(1-\alpha) \sigma/\bar\tau$
        with $c$ and $\sigma$ as in Lemma~\ref{lem:lsv},
  and $X$ is the solution of the Marcus differential equation~\eqref{eq:Marcus}.
  \qed
\end{cor}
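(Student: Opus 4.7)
The plan is to directly invoke Theorem~\ref{thm:rig}, as essentially all the work has been done in the preceding sections, and only the hypotheses need to be verified.

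First, I would set $\alpha' = \alpha \in (1,2)$. Since $Y = [0,1]$ is compact and $v \colon Y \to \R^d$ is H\"older, we have $v \in L^\infty(Y,\R^d)$. The regularity of $a$ and $b$ ($\beta > 1$, $\gamma > \alpha = \alpha'$) and the convergence $\xi_n \to \xi$ are assumed in the hypotheses of Corollary~\ref{cor:PM}, so these inputs to Theorem~\ref{thm:rig} are immediate.

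Second, I would verify the two non-trivial hypotheses of Theorem~\ref{thm:rig}. The weak convergence $W_n \to_w L_\alpha$ in $D([0,1],\R^d)$ with the $\cSM_1$ topology is exactly the content of Theorem~\ref{thm:PM}, and was in turn obtained by invoking Theorem~\ref{thm:Y} with the parameters computed in Lemma~\ref{lem:lsv}. The tightness of $\{\|W_n\|_{p\var}\}$ for all $p > \alpha$ is precisely Theorem~\ref{thm:tight}, which follows from Theorem~\ref{thm:Yp} applied to the intermittent maps. Both statements are established for any absolutely continuous probability measure $\PP$, which matches the setting of Corollary~\ref{cor:PM}.

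Third, I would identify the limiting process explicitly. Combining Theorem~\ref{thm:Y} with parts~\ref{lem:lsv:reg} and~\ref{lem:lsv:c} of Lemma~\ref{lem:lsv}, the limit $L_\alpha$ is the $\alpha$-stable L\'evy process whose spectral measure is
\[
\Lambda = c\cos\tfrac{\pi\alpha}{2}\,\Gamma(1-\alpha)\,\sigma/\bar\tau,
\]
with $c$ and $\sigma$ from Lemma~\ref{lem:lsv}, as recorded already in Remark~\ref{rmk:PM}.

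With all hypotheses verified, Theorem~\ref{thm:rig} yields the conclusion that
\[
((W_n, X_n), \ell_{d+m}) \to_w ((L_\alpha, X), \phi_{b})
\]
in $(\sD^{p\var}([0,1],\R^{d+m}), \balpha_{p\var})$ for every $p > \alpha$, where $X$ solves the Marcus SDE~\eqref{eq:Marcus} driven by $L_\alpha$. There is no real obstacle here --- the corollary is a clean application of the abstract homogenisation theorem to the concrete dynamical setting, and the work has been distributed across the verification of convergence (Sections~\ref{sec:GM}--\ref{sec:driver}) and the abstract rough-path machinery (Sections~\ref{sec:prep}--\ref{sec:rp}).
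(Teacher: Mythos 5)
Your proposal is correct and matches the paper's approach: the corollary is stated as an immediate consequence of Theorems~\ref{thm:PM} and~\ref{thm:tight} (themselves derived from Theorems~\ref{thm:Y},~\ref{thm:Yp} and Lemma~\ref{lem:lsv}) combined with Theorem~\ref{thm:rig}, taking $\alpha'=\alpha$. Your explicit observation that $v\in L^\infty$ because a H\"older function on the compact interval $[0,1]$ is bounded is a small but welcome detail that the paper leaves implicit.
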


\end{document}